\theoremstyle{plain}
\newtheorem{thm}{Theorem}[section]
\newtheorem{corollary}[thm]{Corollary}
\newtheorem{lemma}[thm]{Lemma}
\newtheorem{prop}[thm]{Proposition}
\newtheorem*{theorem*}{Theorem}
\theoremstyle{definition}
\newtheorem{rem}[thm]{Remark}
\newtheorem{dfn}[thm]{Definition}
\newtheorem{assumption}{Assumption}
\numberwithin{equation}{section}
\newcommand{\mytilde}{\raise.17ex\hbox{$\scriptstyle\mathtt{\sim}$}}
\newcommand{\Rat}[1]{\ensuremath{\mathbb{R}^{#1}}}
\renewcommand{\P}{\mathcal{P}}
\newcommand{\bE}{\ensuremath{\mathbb{E}}}
\newcommand{\eps}{\ensuremath{\varepsilon}}
\newcommand{\red}[1]{\textcolor{red}{#1}}
\newcommand{\longthmtitle}[1]{\mbox{}{\bf \textit{(#1).}}}
\newcommand{\margind}[1]{\marginpar{\textcolor{blue}{\tiny\ttfamily#1}}}
\newcommand{\marginl}[1]{\marginpar{\textcolor{green!70!black}{\tiny\ttfamily#1}}}
\title[Tractable DRO reformulations over structured ambiguity sets]{Tractable reformulations of DRO problems over structured optimal transport ambiguity sets}
\author{Lotfi M. Chaouach}
\author{Tom Oomen}
\author{Dimitris Boskos}
\thanks{All the authors are with the Delft Center for Systems and Control of TU Delft. Tom Oomen is also with the Department of Mechanical Engineering of TU Eindhoven, \tt{\{L.Chaouach,D.Boskos\}@tudelft.nl,T.A.E.Oomen@tue.nl}.}
\begin{document}

\maketitle

\begin{abstract}
Structuring ambiguity sets in Wasserstein-based distributionally robust optimization (DRO) can improve their statistical properties when the uncertainty consists of multiple independent components. The aim of this paper is to solve stochastic optimization problems with unknown uncertainty when we only have access to a finite set of samples from it. Exploiting strong duality of DRO problems over structured ambiguity sets, we derive tractable reformulations for certain classes of DRO and uncertainty quantification problems. We also derive tractable reformulations for distributionally robust chance-constrained problems. As the complexity of the reformulations may grow exponentially with the number of independent uncertainty components, we employ clustering strategies to obtain informative estimators, which yield problems of manageable complexity. We demonstrate the effectiveness of the theoretical results in a numerical simulation example. 
\end{abstract}

\section{Introduction}
Decisions under uncertainty are widespread in engineering, and the uncertainty often has specific sources and structure.
A typical way to characterize the uncertainty is through probabilistic models. In addition to the range of an uncertain outcome, these models also capture its expected frequency in each region of its range. Stochastic optimization revolves around making decisions in the face of probabilistic uncertainty~\cite{AS-DD-AR:14}. This yields an effective paradigm for system design and operation when the probabilistic model accurately reflects the nature of the uncertainty. However, such probabilistic models are often not available in practice and need to be inferred from a limited amount of data. This may in turn lead to modeling imperfections that can negatively impact design requirements such as efficiency, safety, and reliability. It is therefore essential to seek guarantees for the performance of stochastic optimization problems under uncertainty about their underlying probabilistic model.

For optimization problems with uncertain constraints that need to be met with prescribed probability, also referred to as chance-constrained problems, the scenario approach yields decisions that respect the constraints with distribution-free guarantees~\cite{MCC-SG:18}. The nominal form of this data-driven method robustifies the optimization problem against the worst-case scenario among i.i.d. realizations of the uncertainty. This way, the scenario approach determines tight feasibility regions for chance-constrained problems ~\cite{MCC-SG:11}, which can even be exact~\cite{MCC-SG:08}.
Multiple domains have successfully employed the scenario approach, including robust control~\cite{SG-MCC:13}, model predictive control~\cite{GCC-LF:12}, and interval prediction \cite{SG-MCC-AC:19}. However, its guarantees regarding the satisfaction of probabilistically tight constraints with high confidence, rely on large amounts of samples, which are not always available. In addition, the realizations of the uncertainty may be corrupted by noise, and then it is no longer clear how to retain the probabilistic guarantees of the scenario theory. This motivates the consideration of distributionally robust approaches for data-driven optimization problems. Despite not being fully distribution free, such approaches can provide probabilistic guarantees for any number of samples for general classes of unknown probability distributions.

Distributionally robust optimization (DRO) aims at hedging against distributional ambiguity, which is typical in real-life stochastic uncertainty. To this end, it considers a range of probabilistic models that could likely characterize the uncertainty and determines an optimal solution that is robust against all the distributions in that range. This paradigm has proven to be useful in several applications that require effective handling of data or parameter uncertainty, including machine learning \cite{SS-DK-PME:19}, portfolio optimization \cite{DB-VG-NK:18}, power dispatch \cite{JL-YC-CD-JL-JL:20}, and scheduling \cite{RJ-MR-GX:19}. DRO has also been employed to solve problems in stochastic control, such as distributionally robust linear quadratic regulator problems \cite{IY:21, IT-CDC-CNH:21}, model predictive control algorithms \cite{PC-PP:22, BPGVP-DK-PJG-MM:15}, and more general formulations in distributionally robust dynamic programming \cite{AN-LEG:05}.

A widely used approach is to group plausible probability distributions inside a ball in the Wasserstein metric, which is usually centered at the empirical distribution of the data. This choice enables the designer to tune the radius of the ambiguity ball so that it contains the true probability distribution with prescribed probability~\cite{NF-AG:15} and yields DRO problems that admit tractable reformulations~\cite{PME-DK:17, JB-KM:19,RG-AJK:16}. Decisions of DRO problems over Wasserstein ambiguity balls are also asymptotically consistent~\cite{NF-AG:15}. Namely, as the number of samples goes to infinity, the ambiguity radius can be tuned so that the optimal value and decision of the DRO problem converge to the corresponding quantities of the ideal stochastic optimization problem. In the finite sample regime, 
Wasserstein ambiguity balls that maintain probabilistic guarantees of containing the true distribution suffer from the curse of dimensionality. In particular, their contraction rate with respect to the number of samples scales poorly with the dimension of the uncertainty~\cite{NF-AG:15, LMC-TO-DB:23, LMC-DB-TO:22}.

A recent line of research aims at rendering the decay rate of the ambiguity ball radius independent of the uncertainty dimension by informing it from the associated optimization problem~\cite{JB-YK-KM:16, SS-DK-PME:19, NS-JB-SG-MS:20, RG:20, JB-KM-NS:21}, thereby suppressing the curse of dimensionality. 
Despite these results, the curse of dimensionality still persists when solving multiple distributionally robust optimization problems with the same underlying uncertainty. Some instances of this situation include model predictive control (MPC)~\cite{PC-PP:22, FW-MEV-BH:22, LA-MF-JL-FD:23}, strategy synthesis for Markov decision process~\cite{IG-DB-LL-MM:23}, and in general dynamic programming problems~\cite{IY:21}. In these cases, the reliability of the DRO solution can be assured if the associated ambiguity set contains the data generating distribution with high confidence. Yet, achieving this with traditional Wasserstein ambiguity balls comes with the compromise of their slow shrinkage rate with the number of samples for high-dimensional uncertainty.

For DRO problems where the ambiguity set contains the true distribution with high confidence, \cite{LMC-TO-DB:23} introduced a new class of ambiguity sets that can ameliorate the curse of dimensionality. This is achievable when the uncertainty consists of several independent components. The corresponding ambiguity sets, termed multi-transport hyperrectangles (MTHs), are constructed by imposing multiple optimal transport constraints, one for each component of the uncertainty. These hyperrectangles shrink at much faster rates with the number of samples compared to their monolithic counterparts while maintaining the same probabilistic guarantees of containing the data-generating distribution. This in turn, can facilitate reducing the conservativeness of their associated DRO problems compared to Wasserstein balls. DRO problems over MTHs are also accompanied by finite-dimensional duality results \cite{LMC-TO-DB:23}. 
However, there are yet no systematic reformulations of their dual problems that can be solved by tractable algorithms.  

The aim of this paper is to develop tractable reformulations for several classes of DRO problems over MTHs. To this end, our first contributions exploit DRO duality over hyperrectangles to derive tractable forms of distributionally robust average cost problems, including those with piecewise affine and quadratic costs. We also exploit the results for piecewise affine costs to solve distributionally robust uncertainty quantification problems for general polyhedral sets. Our second contribution is the tractable reformulation of distributionally robust chance-constrained problems over MTHs. To obtain this reformulation, our third contribution is the establishment of basic properties such as weak compactness of MTHs and conditions under which their associated DRO cost is finite. We also generalize a stochastic minimax theorem, which enables us to treat constraints where the uncertainty-dependent argument may become unbounded. Our last contribution revolves around how to cluster the reference distribution (center) of the MTHs to reduce the complexity of their DRO reformulations when the uncertainty consists of many independent components. We also elaborate on tradeoffs between this complexity reduction and the potential inflation of the hyperrectangles to retain their probabilistic guarantees. The tractable reformulations of distributionally robust uncertainty quantification problems appeared in~\cite{LMC-TO-DB:23CDC} but without proofs, which we provide here.

The paper is organized as follows. Section \ref{sec:preleminaries} introduces necessary preliminaries and notation. Section~\ref{sec:problem:formulation} formulates the problem and Section \ref{sec:basic:results} presents basic properties of MTHs. In Section \ref{sec:DRO:reformulations}, we provide tractable reformulations for DRO problems over multi-transport ambiguity hyperrectangles. In Section \ref{sec:uncertainty:quantification}, we specialize these reformulations for distributionally robust uncertainty quantification problems. Section \ref{sec:DRCCP} introduces corresponding reformulations for distributionally robust chance-constrained problems. In Section \ref{sec:numerical:complexity}, we address the computational complexity of the reformulations. We illustrate the results in a simulation example in Section~\ref{sec:simulation:example}. 


\section{Preliminaries and notation} \label{sec:preleminaries}

Throughout this paper, we use the following notation. We denote by $\|\cdot \|$ an arbitrary norm on $\Rat{d}$
and by $\|\cdot\|_p$  the $p$th norm for $p\in[1,\infty]$.  The dual of a norm  $\|\cdot\|$ is denoted by $\|\cdot\|_*$, where $\|z\|_*:=\sup_{\|\xi\|\le1}\langle  z,\xi\rangle$.
We denote by $\mathbb N_{>0}$ the positive integers, by  $\mathbb R_{\ge0}$ and $\mathbb R_{>0}$ the positive and strictly positive real numbers, respectively, and define $\bar{\mathbb R}:=\mathbb R\cup\{-\infty,+\infty\}$.
%
%
For $N\in\mathbb N_{>0}$, we denote $[N]:=\{1,\ldots,N\}$. We denote by $\mathbb S_n$ the set of $n\times n$ real-valued symmetric matrices. Given the vectors $\bm\lambda:=(\lambda_1,\ldots,\lambda_n)\in\mathbb R^n$ and $\bm d:=(d_1,\ldots,d_n)\in\mathbb N^n$ with $d_1+\cdots+d_n=d$, ${\rm diag}^{\bm d}(\bm\lambda)$ refers to the $d\times d$ diagonal matrix that takes the value $\lambda_k$ from the $\ell_{k-1}+1$th to the $\ell_k$th entry of its diagonal, where $\ell_k:=d_1+\cdots+d_k$ for $k\ge 1$ and $\ell_0:=0$. We use the notation $x_+:=\max\{x,0\}$ for a real number $x$. 
%
Given the set $\Xi=\Xi_1\times\ldots\times\Xi_n$ and distinct $k,l\in[n]$, we define the projection operators ${\rm pr}_k:\Xi\to \Xi_k$ and ${\rm pr}_{kl}:\Xi\to \Xi_k\times\Xi_l$  as ${\rm pr}_k(\xi):=\xi_k$ and ${\rm pr}_{kl}(\xi):=(\xi_k,\xi_l)$, respectively, for all $\xi=(\xi_1,\ldots,\xi_n)\in\Xi$. Given also $\bm d:=(d_1,\ldots,d_n)\in\mathbb N^n$, and $d:=d_1+\cdots+d_n$, we define ${\rm pr}_k^{\bm d}:\Xi^d\to\Xi^{d_k}$ by ${\rm pr}_k^{\bm d}(\xi_1,\ldots,\xi_d):=(\xi_{\ell+1},\ldots,\xi_{\ell+d_k})$ with $\ell:=d_1+\cdots+d_{k-1}$ for $k>1$ and $\ell:=0$ for $k=1$.  The conjugate of the function $h:\Rat{d}\to\bar{\mathbb R}$ is defined by $h^*(z):=\sup_{\xi\in\Rat{d}}\{\langle z,\xi\rangle-h(\xi)\}$. Given a set $\Xi\subset\Rat{d}$, its characteristic function is defined by  
$\chi_\Xi(\xi):=0$ if $\xi\in\Xi$ and $\chi_\Xi(\xi):=+\infty$, otherwise, and its support function by $\sigma_\Xi(\xi):= \sup_{\xi\in\Xi}\langle z,\xi \rangle$. The support function of a set is equal to the conjugate of its characteristic function.  For a convex function $f:\Rat{n}\to\Rat{}$, its subdifferential at a point $x\in\Rat{n}$ is the set
\begin{align*}
\partial f(x):=\{z\in\Rat{n}:f(y)-f(x)\ge\langle z,y-x\rangle\;\textup{for all}\;y\in\Rat{n}\}. 
\end{align*}
Given $x,y\in\Rat{d}$, we denote $x\succeq y$ if all components of $x-y$ are positive. Vectors will be interpreted as column vectors in linear algebra operations unless indicated by a transpose. A metric space is called proper if all its closed and bounded subsets are compact.

\textit{Probability theory:} Let $(\Xi,\rho)$ be a Polish space, i.e., a complete and separable space $\Xi$ with metric $\rho$.  We denote by $\mathcal{P}(\Xi)$ the space of probability distributions on $\Xi$ with its Borel $\sigma$-algebra. For any $p\ge1$, $\mathcal P_p(\Xi)$ denotes the set of distributions in $\mathcal P(\Xi)$ with finite $p$th moment. The class $\mathcal G_r^{\rm up}$ consists of all real-valued functions $h$ on $\Xi$ that satisfy the growth bound       
\begin{align}
h(\xi)\le C(1+\rho(\zeta,\xi)^r)\;\textup{for all}\;\xi\in\Xi, \label{eq:h:growth:bound}
\end{align}
for some $C>0$ and $\zeta\in\Xi$. We also denote by $\mathcal G_r$ the class of functions $h$ with both $-h,h\in \mathcal G_r^{\rm up}$. 
%
We refer to the set of discrete distributions supported on $K\in\mathbb N$ atoms in $\Xi$ as $\mathcal P^K(\Xi)$.
Given the measurable spaces $(\Omega,\mathcal F)$ and $(\Omega',\mathcal F')$, a measurable map $\Psi: (\Omega,\mathcal F) \to (\Omega',\mathcal F')$ assigns to each measure $\mu$ in $(\Omega,\mathcal F)$ the pushforward measure $\Psi_\#\mu$ in $(\Omega',\mathcal F')$ defined by $\Psi_\#\mu(B):=\mu(\Psi^{-1}(B))$ for all $B\in\mathcal F'$. We denote by $P\otimes Q$ the product measure of $P$ and $Q$. The Dirac distribution centered at $\xi\in\Xi$ is denoted by $\delta_\xi$. The indicator function $\mathds 1_\Theta$ of a set $\Theta\subset\Xi$ is $\mathds 1_{\Theta}(\xi):=1$ if $\xi\in\Theta$ and $0$ otherwise. For any $p\ge 1$, the $p$th Wasserstein distance between two probability distributions $P,Q \in\mathcal{P}_p(\Xi)$ is defined as
\begin{align*}
    W_{p}(Q,P):=\bigg(\underset{\pi\in \mathcal C(Q,P)}{\mathrm{inf}} \left\{\int_{\Xi\times\Xi} \rho(\zeta,\xi)^p{\rm d}\pi(\zeta,\xi)\right\} \bigg)
\end{align*}
Each $\pi\in\mathcal C(Q,P)$ is a transport plan, a.k.a.  coupling between $P$ and $Q$, i.e., a distribution on $\Xi\times\Xi$ with marginals $P=\rm{pr}_{2\#}(\pi)$ and $Q=\rm{pr}_{1\#}(\pi)$, respectively. 

Given a scalar random variable $\gamma$ with distribution $P$, its \textit{value at risk} is defined as its left-side $1-\alpha$ quantile for $\alpha\in (0,1)$. In particular, if we denote by $F_\gamma$ its cumulative distribution, where $F_\gamma(x) := P(\gamma\le x)$, we have
\begin{align*} 
    {\rm VaR}_{1-\alpha}^P(\gamma) := F_\gamma^{-1}(1-\alpha)= \inf \{\theta\in\Rat{}:F_\gamma(\theta)=1-\alpha \}.
\end{align*}
The \textit{conditional value at risk} $\mathrm{CVaR}_{1-\alpha}^{P}(\gamma)$ of $\gamma$ at level $\alpha$ is the average value of $\gamma$ beyond its corresponding value at risk ${\rm VaR}_{1-\alpha}^P(\gamma)$, namely,
\begin{align*}
   \mathrm{CVaR}_{1-\alpha}^{P}(\gamma):= \mathbb E[\gamma \,|\, \gamma\ge \mathrm{VaR}_{1-\alpha}^P(\gamma)].
\end{align*}
It can be shown (cf.~\cite{TR-SU:00}) that the conditional value at risk is equivalently given by  
\begin{align} \label{CVaR:equiv}
\mathrm{CVaR}_{1-\alpha}^{P}(\gamma)=\inf_{\tau\in\mathbb R}\{\alpha^{-1}\mathbb E_{P}[(\gamma+\tau)_+]-\tau\}.
\end{align}

On any subset $\mathcal A$ of $\P(\Xi)$, the weak topology is defined as the weakest topology that makes all the maps $P\mapsto\bE_P[h]$ continuous for all bounded and continuous functions $h$ on $\Xi$. Since $\P(\Xi)$ is metrizable when $\Xi$ is Polish~\cite[Proposition 7.20]{DB-SES:78}, the notions of weak compactness, continuity, and semicontinuity are equivalent to weak \textit{sequential} compactness, continuity, and semicontinuity, respectively.  

\section{Problem formulation} \label{sec:problem:formulation}

Stochastic optimization is decision-making in the presence of probabilistic uncertainty~\cite{AS-DD-AR:14}. It includes optimization problems of the form 
 \begin{align} \label{stochastic:optimization}
     \inf_{x\in\mathcal X} \mathbb E_{P_\xi}[f(x,\xi)],
 \end{align}
where $f$ is the objective function, $x\in\mathcal{X}$ is the decision variable, and $\xi\in\Xi$ is a random variable with  distribution $P_\xi$.
This problem seeks decisions that are optimal on average. Another instance of how uncertainty affects stochastic decision-making is when some of the problem constraints are no longer hard, but only need to be satisfied with prescribed probability. This yields chance-constrained problems of the form 
\begin{align} \label{eq:CC:problem:intro}
\begin{aligned}
    \inf_{x\in \mathcal{X}} \;&  \langle g, x \rangle \\
    \text{s.t.}\; & P_\xi(f(x,\xi)\le 0)\ge 1-\alpha, 
\end{aligned}
\end{align} 
 where $1-\alpha$ designates a probabilistic threshold with which we require the constraint to hold. A challenge for both problems is that the distribution $P_\xi$ is often unknown and there is only access to a finite number of i.i.d. samples $\xi^1,\ldots,\xi^N$ of the uncertainty $\xi$. Using these samples, it is possible to approximate $P_\xi$ by a data-driven estimator $\widehat P_\xi$, such as the empirical distribution $\widehat P_\xi\equiv P_\xi^N:=\frac{1}{N}\sum_{i=1}^N\delta_{\xi^i}$ of the samples. 
 Choosing the empirical distribution as a proxy for $P_\xi$ yields the so-called Sample Average Approximation (SAA) of \eqref{stochastic:optimization}, which provides reliable decisions for large amounts of samples. However, when the number of samples is limited, the SAA may become a poor surrogate of the original optimization problem since the empirical distribution may deviate significantly from the true distribution. To hedge against this uncertainty about the distribution, we consider the distributionally robust formulation
 \begin{equation}
 \inf_{x\in\mathcal X} \sup_{P\in\mathcal{P}^N} 
     \mathbb{E}_{P}\big[f(x,\xi) \big]. \label{DR problem}
 \end{equation}
  In this distributionally robust optimization (DRO) problem, $\mathcal{P}^N$ is an ambiguity set of distributions that contains plausible models for the true distribution and can be inferred from the collected samples. In an analogous way, taking into account that 
  \begin{align*}
P_\xi(f(x,\xi)\le 0)\ge 1-\alpha\iff P_\xi(f(x,\xi)>0)\le \alpha,
  \end{align*}
  the chance-constrained problem \eqref{eq:CC:problem:intro} is robustified as
  \begin{equation}
  \begin{aligned}
    \inf_{x\in \mathcal{X}}\; &  \langle g, x \rangle \\
    \text{s.t.}\; & \sup_{P\in\mathcal{P}^N}P_\xi(f(x,\xi)>0)\equiv
    \sup_{P\in\mathcal{P}^N}\bE[\mathds{1}_{\{\xi\in\Xi:\;f(x,\xi) > 0 \}}(\xi)]\le \alpha. \label{eq:DR:cc:problem}
  \end{aligned}
  \end{equation} 
  
A popular approach to construct data-driven ambiguity sets is to group all distributions that are $\varepsilon$-close to the empirical distribution $P_\xi^N$ in the Wasserstein metric. This yields the Wasserstein ambiguity ball 
\begin{align*}
\mathcal B_p(P_\xi^N,\varepsilon):=\{P\in\mathcal 
P_p(\Xi):W_p(P_\xi^N,P)\le\varepsilon\},
\end{align*}
with center $P_\xi^N$ and radius $\varepsilon$. Higher values of the exponent $p\ge 1$ yield larger weights to distribution dissimilarities that are farther apart. This is aligned with the fact that Wasserstein distances penalize horizontal variations of the reference distribution and can effectively capture their impact on the value of the optimization problem. Wasserstein ambiguity balls also yield DRO problems with tractable reformulations~\cite{PME-DK:17, JB-KM:19, RG-AJK:16} and enjoy finite-sample guarantees of containing the data-generating distribution~\cite{NF-AG:15}. 
As a result, the optimal cost of \eqref{DR problem} provides an upper bound for the optimal cost of \eqref{stochastic:optimization} with prescribed confidence \cite[Theorem 3.5]{PME-DK:17}. The same conclusion also holds for the robust version \eqref{eq:DR:cc:problem} of the chance-constrained problem  \eqref{eq:CC:problem:intro}. In particular, if $\mathcal P^N$ contains the true distribution with high confidence, then the feasible set of $x$ in  \eqref{eq:DR:cc:problem} is contained in the feasible set of  \eqref{eq:CC:problem:intro} with the same confidence. Therefore the cost of \eqref{eq:DR:cc:problem} bounds the cost of \eqref{eq:CC:problem:intro} with high confidence.    

\subsection{Structured optimal transport ambiguity sets}


Structured ambiguity sets seek to exclude distributions that significantly deviate from the true distribution to reduce the gap between the original stochastic optimization problem and its distributionally robust approximation. 
To obtain such ambiguity set for data-driven problems,  \cite{LMC-TO-DB:23} considers the case where the uncertainty $\xi\equiv (\xi_1,\ldots,\xi_n)\in\Xi\equiv\Xi_1\times\ldots\times\Xi_n$ consists of $n$ independent components $\xi_k$, $k\in[n]$. This implies that $P_\xi$ is necessarily a product measure, i.e., $P_\xi=P_{\xi_1}\otimes\ldots\otimes P_{\xi_n}$. The structured ambiguity set $\mathcal P^N$ is then constructed so that its elements are closer to a product reference measure than in a monolithic ambiguity ball. In particular, given a vector $\bm\varepsilon:=(\varepsilon_1,\ldots,\varepsilon_n)\succeq0$ of transport budgets and a reference distribution $ Q$,~\cite{LMC-TO-DB:23} introduces the multi-transport hyperrectangle (MTH)
 \begin{align}
    \mathcal T_p( Q,\bm\varepsilon):=\big\{\textup{pr}_{2\#}\pi: \; & \pi\in\mathcal P(\Xi\times\Xi),\;\textup{pr}_{1\#}\pi= Q,\nonumber \; \text{and} \\ &\;\int_{\Xi\times\Xi} \rho_k(\zeta_k,\xi_k)^p {\rm d}\pi(\zeta,\xi)\le \varepsilon_k^p  \;\textup{for all}\;  k\in[n] \Big\}.
    \label{transport:hyperrectangle:Tp}
\end{align}
This ambiguity set consists of all distributions that respect a set of mass transport constraints, one for each component of the uncertainty.
For data-driven problems where we have $N$ i.i.d. samples of $\xi$, we select the product empirical distribution 
\begin{align}
     Q\equiv \bm P_\xi^N:= P_{\xi_1}^N\otimes\ldots\otimes P_{\xi_n}^N
    \label{eq: prod empiricals}
\end{align}
 as a reference distribution. Here $P_{\xi_k}^N:=\frac{1}{N}\sum_{i=1}^N\delta_{\xi_k^i}$, $k\in[n]$ are the empirical distributions of the components of $\xi$.
 The MTH $\mathcal T_p(\bm P_\xi^N,\bm\eps)$ shrinks at a favorable rate with the number of samples compared to Wasserstein ambiguity balls, under the premise that it is designed to contain the true distribution with prescribed probability. When $\Xi\subset\Rat{d}$, this ameliorates the curse of dimensionality that Wasserstein balls face with respect to $d$~\cite{NF-AG:15}, especially when the number $n$ of the independent components is large~\cite[Proposition 5.2]{LMC-TO-DB:23}.  
 This ambiguity set design can be generalized to data-driven cases where a different number of realizations is available for each independent component $\xi_k$ of $\xi$, as these realizations may be collected separately or asynchronously. Then each marginal empirical distribution is constructed using a different number of samples. 


\subsection{Tractability}

Both the DRO problem \eqref{DR problem} and the robustified chance-constrained problem \eqref{eq:DR:cc:problem} typically involve solving an infinite-dimensional optimization problem over a space of probability distributions.  
Thus, their numerical solution necessitates their reformulation as finite-dimensional optimization problems. It is further desirable to identify problem classes for which these reformations can be solved by efficient algorithms. 
The first objective, i.e., the derivation of dual finite-dimensional reformulations of such DRO problems over MTHs $\mathcal P^N\equiv\mathcal T_p(\bm P_\xi^N,\bm\eps)$ has been established in our recent work~\cite{LMC-TO-DB:23}. Our goal in this paper is to also address the second objective for DRO problems over MTHs. Namely, to provide tractable reformulations for the corresponding duals of the DRO problems for appropriate problem classes.

Building on reformulations for Wasserstein DRO and chance-constrained problems for suitable classes of costs and constraints, such as piecewise-affine or quadratic~\cite{PME-DK:17, HRA-CA-JL:18, DK-PME-VAN-SAS:19}, \textit{we seek to derive tractable DRO reformulations over MTHs}. To this end, \textit{we also aim to establish fundamental properties of MTHs, such as weak compactness,} which play a key role in obtaining certain reformulations. Our final goal is to address the potential complexity of the derived reformulations, which depends on the number of atoms comprising the product empirical distribution $\bm P_\xi^N$, i.e., the reference distribution of the hyperrectangle. Since the number of these atoms grows rapidly with the number $n$ of the uncertainty components, we seek to determine  \textit{clustering strategies that guarantee  a manageable complexity for DRO reformulations while preserving the probabilistic guarantees characterizing MTHs.}

 \section{Multi-transport hyperrectangle (MTH) DRO: fundamental properties} \label{sec:basic:results} 

In this section, we establish certain fundamental properties of MTHs. They include general weak compactness and duality results, which are utilized in later sections to obtain tractable reformulations for concrete classes of DRO and chance-constrained problems over MTHs.
To derive these results, we assume the following product structure for the Polish space $\Xi$.  

\begin{assumption}
\label{assumption:equivalent:metric}
\longthmtitle{Metric space class} 
Let $\Xi:=\Xi_1\times\cdots\times\Xi_n$ and consider the metric spaces $(\Xi,\rho)$ and $(\Xi_k,\rho_k)$, $k\in[n]$. We assume that: 

\noindent (i) The spaces $(\Xi_k,\rho_k)$, $k\in[n]$ are proper.

\noindent (ii)  The metric $\rho$ on $\Xi$ is equivalent to the product metric $(\sum_{k=1}^n \rho_k^q)^{1/q}$ for some $q\in[1,+\infty]$. 
\end{assumption}

Under this assumption, we establish weak compactness of MTHs.

\begin{thm}\label{thm:weak:compactness}
\longthmtitle{Weak compactness}
Let Assumption \ref{assumption:equivalent:metric} hold. Then, for any reference measure $ Q\in\mathcal P_p(\Xi)$ the MTH $\mathcal T_p( Q,\bm\varepsilon)$ defined in \eqref{transport:hyperrectangle:Tp} is weakly compact.
\end{thm}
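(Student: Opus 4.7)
The plan is to prove weak compactness via Prokhorov's theorem, by verifying (a) tightness and (b) weak (sequential) closedness of $\mathcal T_p(Q,\bm\varepsilon)$; together these imply weak compactness, since $\Xi$ is Polish so $\mathcal P(\Xi)$ is metrizable and weak sequential compactness coincides with weak compactness. The transport budget constraints together with properness of each $(\Xi_k,\rho_k)$ drive (a), while lower semicontinuity of transport costs under weak convergence of measures drives (b).

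For tightness, I would fix an arbitrary $P\in\mathcal T_p(Q,\bm\varepsilon)$ together with a coupling $\pi$ realizing $P$ as its second marginal. Selecting a base point $\zeta_k^0\in\Xi_k$ for each $k$, the elementary inequality $\rho_k(\zeta_k^0,\xi_k)^p \le 2^{p-1}\bigl(\rho_k(\zeta_k^0,\zeta_k)^p+\rho_k(\zeta_k,\xi_k)^p\bigr)$, combined with the transport constraint $\int\rho_k(\zeta_k,\xi_k)^p\,\mathrm d\pi\le\varepsilon_k^p$ and the finiteness of $\int\rho_k(\zeta_k^0,\zeta_k)^p\,\mathrm dQ$ (which follows from $Q\in\mathcal P_p(\Xi)$ via the metric equivalence in Assumption \ref{assumption:equivalent:metric}(ii) and norm equivalence on $\mathbb R^n$), yields a bound $\int\rho_k(\zeta_k^0,\xi_k)^p\,\mathrm dP_k\le C_k$ that is uniform in $P$, where $P_k:=\text{pr}_{k\#}P$. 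Markov's inequality together with properness of $(\Xi_k,\rho_k)$, which makes each closed ball $\bar B_k(\zeta_k^0,R)$ compact, gives tightness of the marginal family $\{P_k:P\in\mathcal T_p(Q,\bm\varepsilon)\}$. The union bound $P\bigl((K_1\times\cdots\times K_n)^c\bigr)\le\sum_k P_k(K_k^c)$ then transfers tightness to the joint distributions, yielding tightness of $\mathcal T_p(Q,\bm\varepsilon)$.

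For closedness, I would take a sequence $P^m\in\mathcal T_p(Q,\bm\varepsilon)$ converging weakly to some $P$, and corresponding couplings $\pi^m$ with first marginal $Q$, second marginal $P^m$, and satisfying the budget constraints. Since $\{Q\}$ is trivially tight and $\{P^m\}$ is tight (as a weakly convergent sequence on the Polish space $\Xi$), the family $\{\pi^m\}\subset\mathcal P(\Xi\times\Xi)$ is tight. Extracting a weakly convergent subsequence $\pi^{m_j}\to\pi$, continuity of the projection maps under pushforward identifies the marginals of $\pi$ as $Q$ and $P$. Weak lower semicontinuity of $\pi\mapsto\int c\,\mathrm d\pi$ for any nonnegative continuous $c$, applied to $c(\zeta,\xi)=\rho_k(\zeta_k,\xi_k)^p$, preserves each budget inequality in the limit, so $P\in\mathcal T_p(Q,\bm\varepsilon)$.

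The main obstacle is the careful derivation of the uniform tail estimates for the component marginals, which rests crucially on Assumption \ref{assumption:equivalent:metric}(ii) to translate the $p$-th moment of $Q$ with respect to $\rho$ into integrability of each $\rho_k^p$ with respect to the marginals of $Q$; once this is in place, the remainder is a routine assembly of Prokhorov's theorem, tightness of couplings with fixed marginals, and the classical weak lower semicontinuity of transport functionals.
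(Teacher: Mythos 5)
Your proof is correct and follows the same overall skeleton as the paper's: establish tightness and weak closedness of $\mathcal T_p(Q,\bm\varepsilon)$ and invoke Prokhorov's theorem, with the closedness step handled exactly as in the paper --- pass to the couplings, use tightness of couplings with tight marginal families to extract a weakly convergent subsequence, identify the marginals of the limit via continuity of the pushforward under projections, and preserve the budget constraints by lower semicontinuity of $\pi\mapsto\int\rho_k^p\,\mathrm d\pi$ (the paper packages this as weak compactness of the set of transport plans $\Pi_p(Q,\bm\eps)$ in a separate proposition, but the substance is identical). Where you genuinely diverge is the tightness step: the paper encloses $\mathcal T_p(Q,\bm\varepsilon)$ in a sufficiently large Wasserstein ball $\mathcal B_p(Q,\varepsilon)$ using its earlier Proposition 4.6 of \cite{LMC-TO-DB:23} and then cites the known weak compactness of Wasserstein balls over proper spaces from \cite[Theorem 1]{MY-DK-WW:21}, whereas you give a self-contained argument via the elementary bound $\rho_k(\zeta_k^0,\xi_k)^p\le 2^{p-1}\bigl(\rho_k(\zeta_k^0,\zeta_k)^p+\rho_k(\zeta_k,\xi_k)^p\bigr)$, a uniform $p$th-moment estimate on the component marginals, Markov's inequality, properness of each $(\Xi_k,\rho_k)$, and a union bound. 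Your route is more elementary and avoids the external dependence; the paper's is shorter and reuses infrastructure it needs elsewhere anyway (the enclosing-ball construction also appears in its proofs of Proposition \ref{prop:finite:value} and Theorem \ref{thm:existence:inner:maximization}). Both correctly exploit Assumption \ref{assumption:equivalent:metric}(ii) to pass between $\rho$ and the component metrics, and both need the properness hypothesis only to ensure compactness of closed balls (componentwise in your version, for the product space via Lemma \ref{lem:product:space:proper} in the paper's).
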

This result is used later to establish dual reformulations of distributionally robust chance-constrained problems. The proof is given in the Appendix. 
For the rest of the section, we focus on the inner maximization problem of \eqref{DR problem} associated with \eqref{transport:hyperrectangle:Tp}, which is carried out over an infinite-dimensional space of probability distributions. Namely, we consider the problem 
\begin{align}
    \sup_{P\in\mathcal T_p( Q,\bm\eps)} \mathbb E_P[h(\xi)], \label{eq:inner:maximization}
\end{align}
where we fix the decision variable $x$ in \eqref{DR problem} and denote $h(\xi):=f(x,\xi)$ for notational ease. From a modeling perspective, we assume that the cost in \eqref{eq:inner:maximization} is finite-valued for both the reference and the true distribution. It is therefore meaningful to determine conditions under which this finitness property is also retained for the worst-case distribution from the MTH. These are delineated in the next result. 

\begin{prop} \label{prop:finite:value}

\longthmtitle{Finiteness of the optimal value} 
%
Let Assumption \ref{assumption:equivalent:metric}(ii) hold, assume that
the reference distribution $ Q$ has a finite $p$th moment, and let $\bm\eps:=(\eps_1,\ldots,\eps_n)$ with $\eps_k>0$, for all $k\in[n]$. Then the worst-case expectation \eqref{eq:inner:maximization} is finite if and only if $h\in \mathcal G_p^{\rm up}$.
\end{prop}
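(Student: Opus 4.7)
The plan is to prove the two implications separately, using Assumption~\ref{assumption:equivalent:metric}(ii) to convert the componentwise transport budgets into a uniform bound on the $p$th moment of $\rho(\zeta_\ast,\xi)$ over $P\in\mathcal T_p( Q,\bm\eps)$. The key analytic ingredient is the chain $(\sum_k a_k^q)^{1/q}\le\sum_k a_k$ (valid for $q\ge 1$) followed by the power-mean inequality $(\sum_k a_k)^p\le n^{p-1}\sum_k a_k^p$, which together give
\begin{align*}
\rho(\zeta,\xi)^p\le c\sum_{k=1}^n\rho_k(\zeta_k,\xi_k)^p,
\end{align*}
where $c>0$ depends only on $n$, $p$, $q$, and the metric-equivalence constants.

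For the forward direction, assume $h(\xi)\le C(1+\rho(\zeta_\ast,\xi)^p)$ for some $C>0$ and $\zeta_\ast\in\Xi$. For any $P\in\mathcal T_p( Q,\bm\eps)$ realized by an admissible coupling $\pi$, I would apply the triangle inequality $\rho(\zeta_\ast,\xi)\le\rho(\zeta_\ast,\zeta)+\rho(\zeta,\xi)$ together with the convexity bound $(a+b)^p\le 2^{p-1}(a^p+b^p)$ to obtain
\begin{align*}
\bE_P[\rho(\zeta_\ast,\xi)^p]\le 2^{p-1}\big(\bE_{ Q}[\rho(\zeta_\ast,\zeta)^p]+\bE_\pi[\rho(\zeta,\xi)^p]\big).
\end{align*}
The first summand is finite because $ Q$ has a finite $p$th moment, and the second is bounded by $c\sum_k\eps_k^p$ via the displayed inequality above and the transport budgets. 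This yields $\bE_P[h]\le C(1+\text{uniform constant})$, independent of $P$, proving finiteness of the supremum.

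For the converse, assume $h\notin\mathcal G_p^{\rm up}$. Fix any $\zeta_\ast\in\Xi$ and, by the negation of \eqref{eq:h:growth:bound} applied with the constant $m$, extract a sequence $\xi^m\in\Xi$ satisfying $h(\xi^m)>m(1+\rho(\zeta_\ast,\xi^m)^p)$. Take the mixture $P^m:=(1-\beta_m) Q+\beta_m\delta_{\xi^m}$ with $\beta_m:=\min\{1,\min_k\eps_k^p/\bE_{ Q}[\rho_k(\zeta_k,\xi^m_k)^p]\}$ (treating a vanishing denominator as $+\infty$). The plan $(1-\beta_m)(\mathrm{id},\mathrm{id})_\# Q+\beta_m( Q\otimes\delta_{\xi^m})$ has first marginal $ Q$, second marginal $P^m$, and $k$th transport cost $\beta_m\bE_{ Q}[\rho_k(\zeta_k,\xi^m_k)^p]\le\eps_k^p$, placing $P^m$ in $\mathcal T_p( Q,\bm\eps)$. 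Using $\bE_{ Q}[\rho_k(\zeta_k,\xi^m_k)^p]\le\bE_{ Q}[\rho(\zeta,\xi^m)^p]\le 2^{p-1}(M_{ Q}+\rho(\zeta_\ast,\xi^m)^p)$ with $M_{ Q}:=\bE_{ Q}[\rho(\zeta,\zeta_\ast)^p]<\infty$, one checks that $\beta_m(1+\rho(\zeta_\ast,\xi^m)^p)\ge c'>0$ uniformly in $m$, whence $\beta_m h(\xi^m)\ge c' m\to\infty$. Because the paper's modeling convention stated just before \eqref{eq:inner:maximization} ensures that $\bE_{ Q}[h]$ is finite, we conclude $\bE_{P^m}[h]=(1-\beta_m)\bE_{ Q}[h]+\beta_m h(\xi^m)\to+\infty$.

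The main obstacle I anticipate is the uniform control of $\bE_P[\rho(\zeta_\ast,\xi)^p]$ in the forward direction, which must be extracted from the componentwise budgets $\eps_k^p$; this is exactly where Assumption~\ref{assumption:equivalent:metric}(ii) and the strict positivity of every $\eps_k$ come in. Minor bookkeeping in the converse handles the degenerate case $\beta_m=1$ (where $P^m=\delta_{\xi^m}\in\mathcal T_p( Q,\bm\eps)$ and the conclusion $\bE_{P^m}[h]=h(\xi^m)\to\infty$ follows immediately) and the case where some $\bE_{ Q}[\rho_k(\zeta_k,\xi^m_k)^p]$ vanishes (that constraint is vacuous and drops out of the minimum defining $\beta_m$).
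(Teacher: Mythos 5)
Your proof is correct, but it takes a genuinely different route from the paper. The paper's proof is a sandwich argument: using Assumption~\ref{assumption:equivalent:metric}(ii) it constructs an inner Wasserstein ball $\mathcal B_p(Q,\eps_\star)$ with $\eps_\star=c\min_k\eps_k>0$ contained in $\mathcal T_p(Q,\bm\eps)$ and, via \cite[Proposition 4.6]{LMC-TO-DB:23}, an outer ball $\mathcal B_p(Q,\eps^\star)$ containing it, then transfers the known characterization for Wasserstein balls \cite[Theorem~2]{MY-DK-WW:21} through the resulting two-sided bound on the worst-case expectation. You instead prove both implications from scratch: the forward direction by extracting a uniform $p$th-moment bound on $\mathcal T_p(Q,\bm\eps)$ directly from the componentwise budgets (which is essentially a hands-on version of the outer-ball inclusion), and the converse by explicitly constructing mixtures $(1-\beta_m)Q+\beta_m\delta_{\xi^m}$ whose admissibility you verify coordinate by coordinate --- this replaces, and effectively unpacks, the citation of the ``only if'' part of \cite[Theorem~2]{MY-DK-WW:21} applied to the inner ball. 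What the paper's route buys is brevity and reuse of existing machinery; what yours buys is a self-contained argument that makes visible exactly where each hypothesis enters (strict positivity of every $\eps_k$ only in the converse, the finite $p$th moment of $Q$ in both), at the cost of relying on the informal modeling convention that $\bE_Q[h]$ is finite to close the converse. Two small points you should make explicit: the negation of $h\in\mathcal G_p^{\rm up}$ quantifies over all base points $\zeta$, so you need the (standard) observation that the growth class is independent of the base point before fixing $\zeta_\ast$ and extracting the sequence $\xi^m$; and you should note that $\bE_Q[\rho_k(\zeta_k,\xi^m_k)^p]<\infty$ follows from $\rho_k\lesssim\rho$ and the finite $p$th moment of $Q$, so that $\beta_m$ is well defined.
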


\begin{proof}
The proof relies on finding two Wasserstein balls, one that encloses the MTH and one that is enclosed by it, and exploiting validity of the desired result for these balls~\cite[Theorem~2]{MY-DK-WW:21}. To this end, note that by Assumption \ref{assumption:equivalent:metric}(ii) there is some $c>0$ such that $c\rho_k\le\rho$ for every $k\in[n]$ and let $\eps_\star:=c\min\{ \eps_k,\;k\in[n]\}$. For any  $P\in \mathcal B_p( Q,\eps_\star)$, we can pick by \cite[Theorem 4.1]{CV:08} a transport plan $\pi\in\mathcal C( Q,P)$ such that
\begin{align*}
    \int_{\Xi\times\Xi} \rho(\zeta,\xi)^p \pi(\zeta,\xi)\le \eps_\star^p.
\end{align*}
Since $c\rho_k\le\rho$, we get from the definition of $\eps_\star$ that
\begin{align*}
    \int_{\Xi\times\Xi}\rho_k(\zeta_k,\xi_k)^pd\pi(\zeta,\xi)\le \int_{\Xi\times\Xi}\rho(\zeta,\xi)^p/c^pd\pi(\zeta,\xi)\le \eps_\star^p/c^p=\eps_k^p
\end{align*}
for all $k\in[n]$, which yields $\mathcal B_p( Q,\eps_\star)\subset\mathcal T_p( Q,\bm\eps)$. From \cite[Proposition 4.6]{LMC-TO-DB:23} and Assumption~\ref{assumption:equivalent:metric}(ii), we can also select a  sufficiently large ball $\mathcal B_p( Q,\eps^\star)$ that contains $\mathcal T_p( Q,\bm\eps)$. Thus, 
\begin{align}
    \sup_{P\in \mathcal B_p( Q,\eps_\star)} \mathbb E_P[h(\xi)]\le \sup_{P\in \mathcal T_p( Q,\bm\eps)} \mathbb E_P[h(\xi)] \le \sup_{P\in \mathcal B_p( Q,\eps^\star)} \mathbb E_P[h(\xi)] 
\end{align}
and it follows from \cite[Theorem 2]{MY-DK-WW:21} that the optimal value of \eqref{eq:inner:maximization} is $+\infty$ if the conditions in the statement are not met and finite otherwise. This completes the proof.  
\end{proof}

In the next result, we strengthen the conditions of Proposition~\ref{prop:finite:value} to establish that the supremum in~\eqref{eq:inner:maximization} can indeed be attained. These conditions also yield weak continuity of the expected value in \eqref{eq:inner:maximization} with respect to the distributions in the MTH.

\begin{thm}
\label{thm:existence:inner:maximization}
\longthmtitle{Existence of optimal solution \&  continuity over the distribution} 
Let Assumption \ref{assumption:equivalent:metric} hold and assume the reference distribution $ Q$ has a finite $p$th moment. Then:

\noindent (i) If $h\in\mathcal G_r^{\rm up}$ for some $r\in[0,p)$ and $h$ is upper semicontinous, the supremum in \eqref{eq:inner:maximization} is attained.

\noindent (ii) If $h\in\mathcal G_r$ for some $r\in[0,p)$ and $h$ is  continous, the map  $\Psi: \mathcal P_p(\Xi)\to \mathbb R$ with $\Psi(P):= \mathbb E_P[h(\xi)]$ is weakly continuous on $\mathcal T_p( Q,\bm\eps)$.
\end{thm}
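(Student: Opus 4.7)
The plan is to combine the weak compactness of $\mathcal{T}_p(Q, \bm\eps)$ established in Theorem~\ref{thm:weak:compactness} with weak upper semicontinuity (respectively, weak continuity) of the expectation functional $P \mapsto \mathbb{E}_P[h(\xi)]$ on the MTH. Part~(i) will then follow from the Weierstrass-type observation that a weakly upper semicontinuous function on a weakly compact set attains its supremum, while part~(ii) will follow by applying the upper semicontinuity result to both $h$ and $-h$, since the hypothesis $h \in \mathcal{G}_r$ with $h$ continuous ensures that both $\pm h$ are continuous members of $\mathcal{G}_r^{\rm up}$. A uniform moment bound will play a central role: from the enclosing Wasserstein ball argument used in the proof of Proposition~\ref{prop:finite:value} and the triangle inequality for $W_p$, one gets $\sup_{P \in \mathcal{T}_p(Q, \bm\eps)} \mathbb{E}_P[\rho(\zeta, \xi)^p] \le M_p < \infty$ for a fixed reference point $\zeta \in \Xi$.

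The core step I would establish is a weak upper semicontinuity lemma: if $g: \Xi \to \mathbb{R}$ is upper semicontinuous and satisfies $g(\xi) \le C(1 + \rho(\zeta, \xi)^r)$ for some $r \in [0, p)$, then $P \mapsto \mathbb{E}_P[g(\xi)]$ is weakly upper semicontinuous on $\mathcal{T}_p(Q, \bm\eps)$. The truncation $g_M := \min\{g, M\}$ is bounded and upper semicontinuous, so the Portmanteau theorem for bounded upper semicontinuous functions gives $\limsup_n \mathbb{E}_{P_n}[g_M] \le \mathbb{E}_P[g_M]$ whenever $P_n \to P$ weakly in the MTH. For the tail, $(g-M)_+$ is supported on $\{\rho(\zeta, \xi) > R_M\}$ with $R_M := (M/C - 1)^{1/r} \to \infty$; for $R_M \ge 1$ one has $1 + \rho(\zeta,\xi)^r \le 2\rho(\zeta, \xi)^p / R_M^{p-r}$ on this set, and combining with the uniform moment bound yields $\sup_{P \in \mathcal{T}_p(Q, \bm\eps)} \mathbb{E}_P[(g - M)_+] \le 2C M_p / R_M^{p-r} \to 0$. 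Since $0 \le g - g_M = (g - M)_+ \le C(1 + \rho(\zeta, \xi)^r)$ is $P$-integrable, dominated convergence also gives $\mathbb{E}_P[g_M] \to \mathbb{E}_P[g]$ for each fixed $P$. Letting $M \to \infty$ in the chain $\limsup_n \mathbb{E}_{P_n}[g] \le \limsup_n \mathbb{E}_{P_n}[g_M] + \sup_{P'} \mathbb{E}_{P'}[(g-M)_+] \le \mathbb{E}_P[g_M] + 2CM_p/R_M^{p-r}$ delivers the claim.

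With this lemma in hand, part~(i) is immediate: weak upper semicontinuity of $\Psi$ together with weak compactness of $\mathcal{T}_p(Q, \bm\eps)$ guarantees attainment of the supremum. For part~(ii), applying the lemma to both $h$ and $-h$ yields weak upper semicontinuity of $\Psi$ and of $-\Psi$, which together give weak continuity on $\mathcal{T}_p(Q, \bm\eps)$. The main obstacle is the uniform-integrability step, namely controlling $\sup_{P} \mathbb{E}_P[(g - M)_+]$ uniformly over the MTH as $M \to \infty$. This hinges critically on the strict inequality $r < p$ together with the uniform $p$th moment bound inherited from the enclosing Wasserstein ball; without either ingredient, truncation errors could fail to vanish and the passage from the bounded Portmanteau estimate to the full expectation would break down.
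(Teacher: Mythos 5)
Your proposal is correct and follows essentially the same route as the paper: a uniform $p$th moment bound over the MTH obtained from an enclosing Wasserstein ball, truncation of $h$ at a level tied to the growth exponent, tail control via $\rho(\zeta,\xi)^r\le\rho(\zeta,\xi)^p/R^{p-r}$ using $r<p$, and weak convergence applied to the bounded truncated part, combined with weak compactness of $\mathcal T_p(Q,\bm\eps)$ for attainment. The only cosmetic difference is that you derive part (ii) by applying your upper semicontinuity lemma to both $h$ and $-h$, whereas the paper uses a single two-sided truncation $h_R$ and a three-term triangle inequality; just note that your $g_M=\min\{g,M\}$ is bounded above but not necessarily below, so you need the Portmanteau inequality in its version for upper semicontinuous functions bounded above.
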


We provide the proof in the Appendix. From~\eqref{transport:hyperrectangle:Tp}, one can directly verify that MTHs are convex as they are defined through a finite number of linear constraints in the space of couplings on $\Xi\times\Xi$. This enables the derivation of dual reformulations for DRO problems over MTHs. These equivalent formulations avoid the maximization in the infinite-dimensional space of probability distributions and provide the stepping stone to obtain tractable DRO problems. The following results from \cite{LMC-TO-DB:23} furnishes the dual of \eqref{eq:inner:maximization} for general objective functions.  
\begin{prop} \label{prop:dual}
\longthmtitle{DRO dual over MTHs \cite[Theorem 6.4]{LMC-TO-DB:23}}  
Let $h:\Xi\xrightarrow{}\mathbb R$ be upper semicontinuous with $h\in L^1( Q)$ and consider the problem \eqref{eq:inner:maximization}. Then its dual is given by
\begin{align*}
   \inf_{\bm\lambda\succeq0}\Big\{\langle\bm\lambda, \bm\epsilon\rangle+\int_{\Xi}\sup_{\xi\in\Xi}\Big\{h(\xi)-\sum_{k=1}^n \lambda_k \rho_k(\zeta_k,\xi_k)^p\Big\}{\rm d} Q(\zeta)\Big\}, 
\end{align*}
where $\bm\lambda:=(\lambda_1,\ldots,\lambda_n)$ and $\bm\epsilon:=(\eps_1^p,\ldots,\eps_n^p)$, and there exists $\bm\lambda$ such that the infimum is attained. When $ Q$ is the discrete distribution  $\sum_{l=1}^M\vartheta_l\delta_{\xi^l}$, the dual problem becomes
\begin{align*}
   \inf_{\bm\lambda\succeq0}\Big\{\langle\bm\lambda,\bm\epsilon\rangle+\frac{1}{M}\sum_{l=1}^M\sup_{\xi\in\Xi}\Big\{h(\xi)-\sum_{k=1}^n \lambda_k \rho_k(\xi_{k}^l,\xi_k)^p\Big\}\Big\}.
\end{align*}
\end{prop}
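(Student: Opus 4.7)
The plan is to derive the dual via Lagrangian relaxation of the mass-transport constraints, reduce the resulting inner supremum to a pointwise one by disintegrating couplings against the reference measure $Q$, and finally argue strong duality by exploiting the linearity of the problem in the coupling variable.

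First I would lift the primal \eqref{eq:inner:maximization} from the space of marginals $P=\mathrm{pr}_{2\#}\pi$ to the space of couplings. By definition of $\mathcal T_p(Q,\bm\eps)$, the supremum equals
\begin{align*}
\sup_{\pi}\Bigl\{\int_{\Xi\times\Xi} h(\xi)\,{\rm d}\pi(\zeta,\xi) : \mathrm{pr}_{1\#}\pi=Q,\; \int\rho_k(\zeta_k,\xi_k)^p{\rm d}\pi\le\eps_k^p\text{ for all }k\in[n]\Bigr\},
\end{align*}
which is a linear program in $\pi$ over a convex set cut out by $n$ linear inequalities plus the first-marginal constraint. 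Dualizing only the transport-cost constraints with multipliers $\bm\lambda=(\lambda_1,\ldots,\lambda_n)\succeq 0$ yields the Lagrangian
\begin{align*}
L(\pi,\bm\lambda)=\int_{\Xi\times\Xi}\Bigl(h(\xi)-\sum_{k=1}^n\lambda_k\rho_k(\zeta_k,\xi_k)^p\Bigr){\rm d}\pi(\zeta,\xi)+\langle\bm\lambda,\bm\epsilon\rangle,
\end{align*}
so that the primal equals $\sup_\pi\inf_{\bm\lambda\succeq0}L(\pi,\bm\lambda)$ when $\pi$ ranges over probability measures on $\Xi\times\Xi$ with first marginal $Q$.

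Next I would swap sup and inf to obtain $\inf_{\bm\lambda\succeq0}\sup_\pi L(\pi,\bm\lambda)$ and evaluate the inner sup. Using a regular disintegration ${\rm d}\pi(\zeta,\xi)={\rm d}Q(\zeta)\,{\rm d}\pi_\zeta(\xi)$, which exists because $\Xi$ is Polish, the problem separates in $\zeta$: since $\pi_\zeta$ may be any probability measure on $\Xi$, the supremum over $\pi_\zeta$ of a linear functional equals the pointwise essential supremum of its integrand (taking Dirac masses at maximizers, or approximating them). This gives
\begin{align*}
\sup_\pi L(\pi,\bm\lambda)=\langle\bm\lambda,\bm\epsilon\rangle+\int_\Xi \sup_{\xi\in\Xi}\Bigl\{h(\xi)-\sum_{k=1}^n\lambda_k\rho_k(\zeta_k,\xi_k)^p\Bigr\}{\rm d}Q(\zeta).
\end{align*}
Upper semicontinuity of $h$ and continuity of the metrics $\rho_k$ ensure that the integrand is upper semianalytic, hence universally measurable, so the integral is well defined; the $L^1(Q)$ assumption on $h$, combined with the lower envelope $h(\xi)-\sum_k\lambda_k\rho_k(\zeta_k,\zeta_k)^p=h(\zeta)$ attained by $\xi=\zeta$, controls integrability from below.

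The main obstacle, and the crux of the proof, is establishing strong duality (equality of the sup-inf and the inf-sup) without any boundedness assumption on $h$. I would handle this by verifying the hypotheses of an infinite-dimensional Fenchel–Rockafellar theorem: the perturbation function $\phi(\bm t):=\sup\{\bE_P[h]:P\in\mathcal T_p(Q,\bm\eps+\bm t)\}$ is concave in $\bm t$ (by convexity of the primal feasible set in $\pi$) and bounded below on a neighborhood of $0$ by Proposition~\ref{prop:finite:value}; hence $\phi$ admits a supergradient at $\bm 0$, which supplies the dual optimizer $\bm\lambda^\star\succeq 0$ and closes the duality gap. An alternative route, which is perhaps cleaner, is to truncate $h$ to bounded continuous functions, apply Sion's minimax on the weakly compact set $\mathcal T_p(Q,\bm\eps)$ (weak compactness is guaranteed by Theorem~\ref{thm:weak:compactness} under Assumption~\ref{assumption:equivalent:metric}), and then pass to the limit using monotone convergence on both sides. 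Attainment of the dual infimum follows from the coercivity of $\langle\bm\lambda,\bm\epsilon\rangle$ on the nonnegative orthant (as all $\eps_k>0$) together with lower semicontinuity in $\bm\lambda$ of the second term, which implies that minimizing sequences admit convergent subsequences.

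Finally, the specialization to a discrete reference $Q=\sum_{l=1}^M\vartheta_l\delta_{\xi^l}$ is immediate: the integral against $Q$ collapses to a weighted sum over the atoms $\xi^l$, and the stated formula with prefactor $1/M$ corresponds to the uniform weights $\vartheta_l=1/M$ relevant to the empirical-distribution case. No separate duality argument is needed for this case once the general statement is in place.
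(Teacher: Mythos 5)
First, a point of comparison: the paper itself does not prove Proposition~\ref{prop:dual} --- it is imported verbatim from \cite[Theorem 6.4]{LMC-TO-DB:23} --- so there is no in-paper argument to measure your proposal against. Judged on its own terms, your outline follows the standard route for optimal-transport DRO duality (lift to couplings, dualize the $n$ budget constraints, disintegrate against $Q$ to collapse the inner supremum to a pointwise one, then argue strong duality), and the weak-duality direction, the disintegration step, the measurability remark, and the discrete specialization are all fine. The coercivity argument for dual attainment also works, provided you make explicit that the second term of the dual objective is bounded below by $\int_\Xi h\,{\rm d}Q$ (take $\xi=\zeta$ in the supremum), so that the whole objective is $\ge\langle\bm\lambda,\bm\epsilon\rangle+\int h\,{\rm d}Q\to+\infty$ --- and provided all $\eps_k>0$, which the proposition does not actually assume.

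The genuine gap is the sup--inf interchange, which you correctly flag as the crux but do not close. Your second route (truncate $h$, apply Sion's minimax, pass to the limit) runs into a concrete obstruction: after dualizing the budget constraints, the inner supremum in $\inf_{\bm\lambda}\sup_\pi L(\pi,\bm\lambda)$ ranges over \emph{all} couplings with first marginal $Q$, and that set is not weakly compact unless $\Xi$ is compact (Theorem~\ref{thm:weak:compactness} and Proposition~\ref{prop:weak:compactness:Pi} only give compactness of the budget-constrained set $\Pi_p(Q,\bm\eps)$); moreover the interchange of $\lim_m$ with $\inf_{\bm\lambda}$ after truncation needs a separate equi-coercivity or subsequence argument. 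Your first route (perturbation function) is the right one, but the condition you invoke is not the one you need: a concave value function $\phi(\bm t)$ admits a supergradient at $\bm 0$ when it is \emph{finite} (bounded above) on a neighborhood of $\bm 0$, not merely bounded below; by Proposition~\ref{prop:finite:value} that finiteness requires $h\in\mathcal G_p^{\rm up}$, which is not among the hypotheses of the proposition ($h$ is only assumed upper semicontinuous and in $L^1(Q)$). A complete proof therefore has to split into the case $h\in\mathcal G_p^{\rm up}$ (where the perturbation argument applies) and the case where both primal and dual equal $+\infty$, or else invoke a duality theorem tailored to this setting as the cited reference does.
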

%



The expression of the dual problem over discrete distributions, such as the empirical or the product empirical distribution, hints at the fact that tractable reformulations hinge on explicit ways to evaluate the suprema therein with respect to $\xi$. This will indeed be the case for all the specific problem classes that we tackle in the later sections.

\section{Tractable reformulations for DRO problems associated with MTHs} \label{sec:DRO:reformulations}
In this section, we consider MTHs \eqref{transport:hyperrectangle:Tp} that are centered at a discrete distribution $ Q$ and exploit Theorem \ref{prop:dual} to derive tractable reformulations of the DRO problem \eqref{eq:inner:maximization}. We make the following assumption regarding the reference distribution $ Q$. 

\begin{assumption} \label{assumption:P_hat}
\longthmtitle{Reference distribution} 
The reference distribution $ Q$ is discrete and consists of $M$ atoms $\xi^1,\ldots,\xi^M$ with masses $\vartheta_1,\ldots,\vartheta_M$, respectively.  
\end{assumption}

Here, we mainly build on the convex reformulations from \cite{PME-DK:17}, which considers ambiguity balls using the 1-Wasserstein distance in $\Rat{d}$. We therefore focus on  $\Rat{d}$-valued random variables  $\xi=(\xi_1,\ldots,\xi_n)\in\Xi\subset\Rat{d}\equiv\Rat{d_1}\times\ldots\times\Rat{d_n}$.  
To transform the optimization problem \eqref{eq:inner:maximization} into a tractable convex program, we also need to make certain assumptions about the domain of the uncertainty $\Xi$ and the objective function $h$. 
As in \cite{PME-DK:17}, we assume that the objective function can be expressed as the point-wise maximum of a finite family of concave functions.

\begin{assumption}
\longthmtitle{Convex decomposition}
The set $\Xi\subset\mathbb R^d$ is convex and closed. Further,  the objective function $h:\Rat{d}\to\Rat{}$ is real-valued and has the form $h(\xi)=\max_{j\in[m]}h_j(\xi)$, for some $m\in\mathbb N_{>0}$, where the functions $-h_j:\Rat{d}\to\bar{\Rat{}}$, $j\in[m]$ are convex, proper, lower semi-continuous, and not identically $+\infty$ on $\Xi$ (although some of the functions $h_j$ may attain the value $-\infty$ individually, we assume their collective maximum to be always finite).  
\label{assumption:hj:concave}
\end{assumption}
 In the next result, we exploit Assumptions \ref{assumption:P_hat} and \ref{assumption:hj:concave} to derive a finite-dimensional convex reformulation for \eqref{eq:inner:maximization}. All the proofs of the section are provided in Appendix~\ref{sec:appendix:A}.

\begin{prop}
\longthmtitle{Finite convex program} Consider the optimization problem \eqref{eq:inner:maximization} with $p=1$ and some $\bm\varepsilon\succeq0$. If $Q$ and $h$ satisfy Assumptions~\ref{assumption:P_hat} and~\ref{assumption:hj:concave}, respectively, then the worst-case expectation \eqref{eq:inner:maximization} can be evaluated by solving the finite convex program
\begin{equation} \label{eq: finite convex program}
\begin{aligned}
    \inf_{\underset{l\in[M], j\in[m]}{\bm\lambda,s_{l},z_{lj},\upsilon_{lj}}} &\;\langle\boldsymbol{\lambda, \varepsilon}\rangle+\sum_{l=1}^{M}\vartheta_l s_l &&  \\
    {\rm s.t.}\quad & [-h_j]^*(z_{lj}-\upsilon_{lj})+\sigma_{\Xi}(\upsilon_{lj})-\langle z_{lj},\xi^l\rangle\le s_l\;&& \quad l\in[M],\;j\in[m] \\
    &\|\textup{pr}_k^{\bm d}(z_{lj})\|_*\le\lambda_k\;  && \quad l\in[M],\;j\in[m], \;k\in[n]. 
    \end{aligned}
\end{equation}
Here, $\bm\lambda:=(\lambda_1,\ldots,\lambda_n)$, $\bm d:=(d_1,\ldots, d_n)$ and $\sigma_\Xi$ is the support function of $\Xi$. 
\label{prop:finite:convex:program}
\end{prop}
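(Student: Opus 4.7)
The plan is to apply the duality result of Proposition~\ref{prop:dual} and then systematically reformulate the resulting inner supremum via conjugate calculus, in the spirit of~\cite{PME-DK:17} but adapted to the multi-transport structure of MTHs. Since $Q$ is discrete with atoms $\xi^l$ and masses $\vartheta_l$, Proposition~\ref{prop:dual} with $p=1$ rewrites \eqref{eq:inner:maximization} as the infimum over $\bm\lambda\succeq 0$ of
\begin{align*}
\langle\bm\lambda,\bm\varepsilon\rangle+\sum_{l=1}^{M}\vartheta_l\sup_{\xi\in\Xi}\Big\{h(\xi)-\sum_{k=1}^{n}\lambda_k\|\xi_k^l-\xi_k\|\Big\},
\end{align*}
where each metric $\rho_k$ is understood to be induced by a norm $\|\cdot\|$ on $\Rat{d_k}$. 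Using $h=\max_{j\in[m]}h_j$ and introducing epigraphical variables $s_l$, the constraint $\sup_\xi\{h(\xi)-\sum_k\lambda_k\|\xi_k^l-\xi_k\|\}\le s_l$ decouples into $\sup_{\xi\in\Xi}\{h_j(\xi)-\sum_k\lambda_k\|\xi_k^l-\xi_k\|\}\le s_l$ for every $j\in[m]$.

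Next, for each pair $(l,j)$ I would reformulate this constraint in two dualization steps. First, I use the dual representation of the norm $\lambda_k\|\xi_k^l-\xi_k\|=\sup_{\|y_k\|_*\le\lambda_k}\langle y_k,\xi_k^l-\xi_k\rangle$ and stack the $y_k$'s into a single vector $z\in\Rat{d}$ with $\textup{pr}_k^{\bm d}(z)=y_k$, obtaining $\sum_k\lambda_k\|\xi_k^l-\xi_k\|=\sup_{z\in Z_{\bm\lambda}}\langle z,\xi^l-\xi\rangle$, where $Z_{\bm\lambda}:=\{z\in\Rat{d}:\|\textup{pr}_k^{\bm d}(z)\|_*\le\lambda_k,\,k\in[n]\}$. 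The inner supremum then takes the saddle form $\sup_{\xi\in\Xi}\inf_{z\in Z_{\bm\lambda}}\{h_j(\xi)+\langle z,\xi-\xi^l\rangle\}$; since $Z_{\bm\lambda}$ is convex and compact, $\Xi$ is convex, and the integrand is linear in $z$ and concave upper semicontinuous in $\xi$, Sion's minimax theorem licenses the interchange of $\sup$ and $\inf$. After swapping, the supremum over $\xi$ is a Legendre--Fenchel conjugate, $\sup_{\xi\in\Xi}\{\langle z,\xi\rangle-(-h_j)(\xi)\}=[(-h_j)+\chi_\Xi]^*(z)$, and the infimal-convolution formula (Rockafellar, \emph{Convex Analysis}, Thm.~16.4) gives
\begin{align*}
[(-h_j)+\chi_\Xi]^*(z)=\inf_{\upsilon}\{[-h_j]^*(z-\upsilon)+\sigma_\Xi(\upsilon)\}.
\end{align*}
Promoting $z$ and $\upsilon$ to free decision variables $z_{lj}$, $\upsilon_{lj}$ for each $(l,j)$, together with the projection dual-norm bounds $\|\textup{pr}_k^{\bm d}(z_{lj})\|_*\le\lambda_k$ and the translation term $-\langle z_{lj},\xi^l\rangle$, then assembles precisely the program~\eqref{eq: finite convex program}.

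The step I expect to be most delicate is justifying the minimax interchange: $\Xi$ may be unbounded and individual $h_j$'s may attain $-\infty$, so I would first restrict the sup to the effective domain $\{\xi\in\Xi:h_j(\xi)>-\infty\}$, which is convex since $-h_j$ is convex, before invoking Sion's theorem on the compact convex set $Z_{\bm\lambda}$. The conjugate infimal-convolution identity also demands a mild constraint qualification on the relative interiors of $\Xi$ and $\textup{dom}(-h_j)$, which is guaranteed by Assumption~\ref{assumption:hj:concave}, since each $-h_j$ is proper and not identically $+\infty$ on $\Xi$. Attainment of the outer infimum in $\bm\lambda$ is inherited directly from Proposition~\ref{prop:dual}, so the reformulated program~\eqref{eq: finite convex program} indeed evaluates~\eqref{eq:inner:maximization}.
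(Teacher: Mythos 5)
Your proposal follows essentially the same route as the paper's proof: apply the duality of Proposition~\ref{prop:dual} to the discrete reference measure, introduce epigraphical variables, represent $\lambda_k\|\xi_k^l-\xi_k\|$ via the dual norm, interchange $\sup$ and $\inf$ by a minimax theorem over the compact set $\{z:\|\textup{pr}_k^{\bm d}(z)\|_*\le\lambda_k\}$, and express the remaining supremum through the conjugate of $-h_j+\chi_\Xi$ as an infimal convolution. The only minor imprecision is your claim that Assumption~\ref{assumption:hj:concave} supplies the relative-interior constraint qualification making the infimal-convolution identity exact --- it only guarantees ${\rm dom}(-h_j)\cap\Xi\neq\emptyset$ --- whereas the paper retains the closure operator ${\rm cl}(\cdot)$ and relies on the standard argument (as in \cite{PME-DK:17}) that replacing the constraint function by its closure does not change the optimal value of the minimization.
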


Next, we specialize this result to piecewise affine objective functions. These form a rather general function class, as they can approximate any nonlinear function of bounded variation with arbitrary accuracy~\cite{JK-FR:16}.

\begin{prop}
\longthmtitle{Piecewise affine objective functions} \label{prop:Paffine:reformulation}
Assume $p=1$ and suppose that the domain of the uncertainty set is the polyhedral set $\Xi:=\{\xi\in\mathbb R^d:\;C\xi\preceq f\}$, where $C\in\Rat{r\times d}$, and that  $h_j(\xi):=\langle\alpha_j,\xi\rangle+b_j$, $j\in[m]$. 

\noindent (i) If $h_j(\xi)=\max_{j\in[m]}h_j(\xi)$, then \eqref{eq:inner:maximization} can be evaluated by solving
\begin{align}
\begin{aligned}
\inf_{\underset{l\in[M], j\in[m]}{\bm\lambda,s_{l},\gamma_{lj}}}\; &\langle\boldsymbol{\lambda, \varepsilon}\rangle+\sum_{l=1}^{M}\vartheta_l s_l && \\
{\rm s.t.}\quad\; & b_j+\langle \alpha_j,{\xi}^l\rangle+\langle \gamma_{lj},f-C {\xi}^l\rangle\le s_l && \quad l\in[M],\;j\in[m] \\
&\left\|\textup{pr}_k^{\bm d}(C^\top\gamma_{lj}-\alpha_j)\right\|_*\le\lambda_k  && \quad l\in[M],\;j\in[m], \;k\in[n] \\
&\gamma_{lj}\succeq0  &&  \quad l\in[M],\;j\in[m].
\end{aligned}
\end{align}

\noindent (ii) If $h_j(\xi)=\min_{j\in[m]}h_j(\xi)$, then \eqref{eq:inner:maximization} can be evaluated by solving
\begin{align}
\begin{aligned}
\inf_{\underset{l\in[M]}{\bm\lambda,s_{l},\gamma_l,\theta_l}} &\langle\boldsymbol{\lambda, \varepsilon}\rangle+\sum_{l=1}^{M}\vartheta_l s_l &&  \\
{\rm s.t. }\quad & \langle \theta_l,b+A\xi^l\rangle+\langle \gamma_l,f-C{\xi}^l\rangle\le s_l\;&& \quad l\in[M] \\
&\left\|{\rm pr}_k^{\bm d} (C^\top\gamma_l-A^\top\theta_l)\right\|_*\le\lambda_k  && \quad l\in[M], \; k\in[n] \\
&\langle \theta_l,\bm 1\rangle=1 && \quad l\in[M] \\
&\gamma_l\succeq0, \; \theta_l\succeq0 &&  \quad l\in[M].
\end{aligned}
\end{align}
In these reformulations, $A\in\Rat{m\times d}$ is the matrix with rows $\alpha_j^\top$,  $b$ is the column vector with entries $b_j$, $\bm 1$ is a vector of ones, $\bm\lambda:=(\lambda_1,\ldots,\lambda_n)$, and $\bm d:=(d_1,\ldots, d_n)$.
\end{prop}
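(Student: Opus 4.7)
The plan is to specialize Proposition \ref{prop:finite:convex:program} to the piecewise affine setting by evaluating the two key ingredients of its reformulation, namely the conjugate $[-h_j]^*$ and the support function $\sigma_\Xi$, and then eliminating the auxiliary variables $z_{lj},\upsilon_{lj}$ in favor of the linear programming multipliers induced by the polyhedral description $\Xi=\{\xi:C\xi\preceq f\}$.

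\textbf{Part (i).} Since $-h_j(\xi)=-\langle\alpha_j,\xi\rangle-b_j$ is affine, a direct computation gives $[-h_j]^*(w)=b_j$ if $w=-\alpha_j$ and $+\infty$ otherwise. The first constraint of \eqref{eq: finite convex program} therefore forces $\upsilon_{lj}=z_{lj}+\alpha_j$ and collapses the conjugate term to $b_j$. Because $\Xi$ is polyhedral, strong linear programming duality yields $\sigma_\Xi(\upsilon)=\inf\{\langle\gamma,f\rangle:\gamma\succeq 0,\,C^\top\gamma=\upsilon\}$ whenever the support function is finite, with the infimum attained. Introducing a multiplier $\gamma_{lj}\succeq 0$ with $C^\top\gamma_{lj}=\upsilon_{lj}$, and eliminating $\upsilon_{lj},z_{lj}$ via $z_{lj}=C^\top\gamma_{lj}-\alpha_j$, an algebraic substitution transforms the first constraint into $b_j+\langle\alpha_j,\xi^l\rangle+\langle\gamma_{lj},f-C\xi^l\rangle\le s_l$ and the projection constraint into $\|\textup{pr}_k^{\bm d}(C^\top\gamma_{lj}-\alpha_j)\|_*\le\lambda_k$, matching the claim.

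\textbf{Part (ii).} The function $h=\min_{j\in[m]}h_j$ is concave, hence already fits Assumption \ref{assumption:hj:concave} with a single index, so Proposition \ref{prop:finite:convex:program} applies with $m=1$ and only the conjugate of $-h$ remains to be computed. Writing $-h$ epigraphically gives $[-h]^*(w)=\sup_{\xi,t}\{\langle w,\xi\rangle-t:\;-\langle\alpha_j,\xi\rangle-b_j\le t\;\forall j\in[m]\}$, which is a linear program whose LP dual yields $[-h]^*(w)=\inf\{\langle b,\theta\rangle:\theta\in\Delta_m,\,A^\top\theta=-w\}$ with $\Delta_m:=\{\theta\succeq 0:\langle\theta,\bm 1\rangle=1\}$. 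Introducing multipliers $\theta_l\in\Delta_m$ and $\gamma_l\succeq 0$ with $-A^\top\theta_l=z_l-\upsilon_l$ and $C^\top\gamma_l=\upsilon_l$, and eliminating $z_l,\upsilon_l$ through $z_l=C^\top\gamma_l-A^\top\theta_l$, the same substitution as in part (i) converts the conjugate inequality into $\langle\theta_l,b+A\xi^l\rangle+\langle\gamma_l,f-C\xi^l\rangle\le s_l$ and yields the stated reformulation.

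The main obstacle is ensuring that both applications of LP duality are lossless. For $\sigma_\Xi$ this is standard polyhedral duality, and for $[-h]^*$ strong duality again holds automatically in the polyhedral setting. One must also confirm that the cases $\sigma_\Xi(\upsilon)=+\infty$ or $[-h]^*(w)=+\infty$ are absorbed correctly: they correspond precisely to infeasibility of the dual constraints on $\gamma_{lj},\theta_l$, and hence drop out of the joint minimization defining the reformulated program. Once these points are verified, both parts follow by direct substitution into Proposition \ref{prop:finite:convex:program}.
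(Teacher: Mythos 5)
Your proposal is correct and follows exactly the route the paper intends: the paper's own proof is a one-line deferral to Proposition \ref{prop:finite:convex:program} and the reasoning of \cite[Corollary 5.1]{PME-DK:17}, which is precisely the conjugate/support-function computation and LP-duality elimination of $z_{lj},\upsilon_{lj}$ that you carry out explicitly. Your writeup in fact supplies the details the paper omits, including the correct handling of the $+\infty$ cases as dual infeasibility.
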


Based on~\cite{DK-PME-VAN-SAS:19} we also consider quadratic objective functions and derive tractable reformulation when $p=2$, $\Xi=\Rat{d}$ and $\|\cdot\|$ is the Euclidean norm.

\begin{prop} \longthmtitle{Indefinite quadratic objective functions} \label{prop:quad:reformulation}
    Assume $\Xi=\mathbb R^d$ is equipped with the Euclidean norm, and consider the indefinite quadratic loss function $h(\xi)=\xi^\top \mathcal Q\xi+2q^\top \xi$ with $\mathcal Q\in \mathbb S_d$ and $q\in\mathbb R^d$. Then, under Assumption \ref{assumption:P_hat}, the worst-case expectation \eqref{eq:inner:maximization} with $p=2$, can be evaluated by solving the tractable semi-definite program
    \begin{align}
    \begin{aligned}
        \inf_{\underset{l\in[M]}{\bm\lambda\succeq0, s_l\ge0}}& \langle \bm\lambda,\bm\epsilon\rangle + \sum_{l=1}^{M} \vartheta_l s_l \\ 
               {\rm s.t.} \quad   &\begin{bmatrix}
                    {\rm diag}^{\bm d}(\bm\lambda)-\mathcal Q & q+{\rm diag}^{\bm d}(\bm\lambda)\xi^l \\
                    q^\top+{\xi^l}^\top {\rm diag}^{\bm d}(\bm\lambda) & s_l+{\xi^l}^\top {\rm diag}^{\bm d}(\bm\lambda) \xi^l
                \end{bmatrix}\succeq0 && \qquad \qquad l\in[M] 
    \end{aligned}
    \end{align}
    where $\bm\epsilon:=(\eps_1^2,\ldots,\eps_n^2)$, $\bm\lambda:=(\lambda_1,\ldots,\lambda_n)$, and $\bm d:=(d_1,\ldots, d_n)$. 
\end{prop}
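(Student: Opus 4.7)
The plan is to obtain the SDP from the duality result in Proposition \ref{prop:dual} by performing the inner unconstrained quadratic maximization for each atom of the reference distribution and then converting the resulting epigraph constraint into a linear matrix inequality via the Schur complement.

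First, I would apply Proposition \ref{prop:dual} with $p=2$ and the Euclidean metric on each $\Rat{d_k}$, so that $\rho_k(\xi_k^l,\xi_k)^2=\|\xi_k^l-\xi_k\|^2$. Observing that
\begin{equation*}
\sum_{k=1}^n\lambda_k\|\xi_k^l-\xi_k\|^2=(\xi-\xi^l)^\top {\rm diag}^{\bm d}(\bm\lambda)(\xi-\xi^l),
\end{equation*}
and abbreviating $D_{\bm\lambda}:={\rm diag}^{\bm d}(\bm\lambda)$, the dual cost for each atom $\xi^l$ becomes
\begin{equation*}
\sup_{\xi\in\Rat{d}}\bigl\{-\xi^\top(D_{\bm\lambda}-\mathcal Q)\xi+2(q+D_{\bm\lambda}\xi^l)^\top\xi-{\xi^l}^\top D_{\bm\lambda}\xi^l\bigr\}.
\end{equation*}

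Next, I would argue that this unconstrained concave quadratic maximum is finite only when $D_{\bm\lambda}-\mathcal Q\succeq 0$ and the vector $q+D_{\bm\lambda}\xi^l$ lies in the range of $D_{\bm\lambda}-\mathcal Q$, in which case its value equals $(q+D_{\bm\lambda}\xi^l)^\top(D_{\bm\lambda}-\mathcal Q)^{\dagger}(q+D_{\bm\lambda}\xi^l)-{\xi^l}^\top D_{\bm\lambda}\xi^l$. Introducing the epigraph variable $s_l$ and moving the term ${\xi^l}^\top D_{\bm\lambda}\xi^l$ to the other side, the constraint
\begin{equation*}
s_l+{\xi^l}^\top D_{\bm\lambda}\xi^l \ge (q+D_{\bm\lambda}\xi^l)^\top(D_{\bm\lambda}-\mathcal Q)^{\dagger}(q+D_{\bm\lambda}\xi^l)
\end{equation*}
is, by the generalized Schur complement lemma, exactly equivalent to positive semidefiniteness of the block matrix appearing in the statement, jointly with $D_{\bm\lambda}-\mathcal Q\succeq 0$ and the range condition on $q+D_{\bm\lambda}\xi^l$ (the latter two being encoded implicitly in the LMI). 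Collecting the objective $\langle \bm\lambda,\bm\epsilon\rangle+\sum_l\vartheta_l s_l$ and the LMIs over $l\in[M]$ yields the claimed SDP.

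The step I expect to be the most delicate is the rigorous handling of the boundary cases where $D_{\bm\lambda}-\mathcal Q$ is only positive semidefinite (in particular, where some $\lambda_k$ vanish and $D_{\bm\lambda}$ itself becomes singular): a naive Schur complement argument requires strict positive definiteness of the leading block, so I would either invoke the generalized Schur complement (using the Moore--Penrose pseudoinverse and the range condition) or perturb $\bm\lambda$ to $\bm\lambda+\epsilon\bm 1$, apply the standard Schur complement, and take $\epsilon\downarrow 0$ by lower semicontinuity of the epigraph. A secondary verification is that the non-negativity constraints $s_l\ge 0$ in the statement are without loss of generality: the supremum in the dual is bounded below by $h(\xi^l)-0=h(\xi^l)$ at $\xi=\xi^l$, and a standard argument (shifting $h$ by a constant if necessary, or reading off the LMI at $\xi=\xi^l$) shows that any feasible $(s_l,\bm\lambda)$ admits the bound $s_l\ge 0$ without altering the optimum.
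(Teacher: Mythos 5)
Your proposal follows essentially the same route as the paper's proof: apply the duality result of Proposition \ref{prop:dual}, rewrite the transport penalty as the quadratic form $(\xi-\xi^l)^\top{\rm diag}^{\bm d}(\bm\lambda)(\xi-\xi^l)$, solve the resulting unconstrained concave quadratic maximization explicitly via the pseudoinverse under the positive-semidefiniteness and range conditions, and convert the epigraph inequality into the LMI through the generalized Schur complement. The delicate points you flag (singular ${\rm diag}^{\bm d}(\bm\lambda)-\mathcal Q$ and the role of $s_l\ge 0$) are handled in the paper simply by invoking the generalized Schur complement with the Moore--Penrose pseudoinverse, so your argument is correct and complete.
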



\section{Uncertainty quantification using structured ambiguity sets }
\label{sec:uncertainty:quantification}

 In this section, we develop tractable reformulations to solve uncertainty quantification problems over MTHs. Such problems can be of important practical interest in applications where we seek to assess whether a physical or engineered system is safe or not. To this end, we determine a set of constraints that must be satisfied by the uncertain state of the system with a specified probability so that the system can be qualified as safe or unsafe.

We focus on scenarios where the distribution of the state is unknown and we can only exploit samples to infer worst-case probabilities of being either safe or unsafe with high confidence. To achieve this, we exploit the reformulations from \cite[Corollary 5.3]{PME-DK:17} that allow determining worst- and best-case probabilities for a random variable $\xi$ to belong to a polyhedral safe set $\mathbb A$ or not and extend it to the more general case where the safe set is the union of polyhedral sets. Specifically, we solve the problem 
\begin{align}
    \sup_{P\in\mathcal T_1( Q,\bm\varepsilon)} P\left(\xi\in\mathbb A\right),
    \label{eq: safety probability}
\end{align}
where $\mathbb A:=\cup_{j=1}^m\mathbb A_j$ and $\mathbb A_j$ are polyhedral sets for all $j\in[m]$. 
\begin{thm} \label{thm:uncertainty:quantification}
    \longthmtitle{Uncertainty quantification for unions of polyhedral sets} Let the support of $P_\xi$ be the nonempty polyhedral set $\Xi:=\{\xi\in\mathbb R^d: C\xi\preceq f\}$ and assume that each set $\mathbb A_j:=\{\xi\in\mathbb R^d:A_j\xi\preceq b_j\}$, $j\in [m]$ has nonempty intersection with $\Xi$. Then, under Assumption~\ref{assumption:P_hat}, the probability \eqref{eq: safety probability} can be evaluated by solving the convex program 
    \begin{equation}     
    \begin{aligned}
    \inf_{\underset{l\in[M], j\in[m]}{\bm\lambda,s_l,\gamma_{lj},\theta_{lj}}} \; & \langle\bm\lambda,\bm\varepsilon\rangle+\sum_{l=1}^{M} \vartheta_l s_l \label{eq: best-case probability} \\
        {\rm s.t.}\quad\; & 1+\langle \theta_{lj},b_j-A_j {\xi}^l\rangle+\langle \gamma_{lj},f-C {\xi}^l\rangle\le s_l && l\in [M] \\
        &\| \textup{pr}_k^{\bm d} (A_j^\top \theta_{lj}+C^\top\gamma_{lj})\|_*\le\lambda_k && l\in [M],\; j\in[m],\; k\in [n] \\
        & \gamma_{lj}\succeq0, \; \theta_{lj}\succeq0,\; s_l\ge0  && l\in [M],\; j\in[m] 
       \end{aligned}
    \end{equation} 
    where $\bm\lambda:=(\lambda_1,\ldots,\lambda_n)$ and $\bm d:=(d_1,\ldots,d_n)$. 
\end{thm}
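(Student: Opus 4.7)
The plan is to realize $\mathds 1_{\mathbb A}$ as a pointwise maximum satisfying Assumption \ref{assumption:hj:concave}, invoke the general convex reformulation of Proposition \ref{prop:finite:convex:program}, and then resolve the abstract conjugate and support-function terms through linear programming (LP) duality for the polyhedra $\mathbb A_j$ and $\Xi$. First, since $P(\xi\in\mathbb A)=\mathbb E_P[\mathds 1_{\mathbb A}(\xi)]$, I would write $\mathds 1_{\mathbb A}(\xi)=\max_{j\in\{0,1,\ldots,m\}}h_j(\xi)$ on $\mathbb R^d$, where $h_0\equiv 0$ and, for $j\in[m]$, $h_j(\xi):=1$ if $\xi\in\mathbb A_j$ and $h_j(\xi):=-\infty$ otherwise. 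Each $-h_j$ is convex, proper, and lower semicontinuous, since its effective domain is either $\mathbb R^d$ or the closed convex polyhedron $\mathbb A_j$; the hypothesis $\mathbb A_j\cap\Xi\neq\emptyset$ is exactly what ensures $-h_j$ is not identically $+\infty$ on $\Xi$. The collective maximum lies in $\{0,1\}$, so Assumption \ref{assumption:hj:concave} holds and Proposition \ref{prop:finite:convex:program} yields a finite convex program of the form \eqref{eq: finite convex program} with one family of constraints per $j\in\{0,1,\ldots,m\}$.

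Next I would compute the conjugate and support-function terms appearing in \eqref{eq: finite convex program}. For $j=0$ one has $[-h_0]^*=\chi_{\{0\}}$, which pins $z_{l0}=\upsilon_{l0}=0$ and collapses the corresponding constraint to $s_l\ge 0$. For $j\in[m]$ a direct calculation yields $[-h_j]^*(z)=1+\sigma_{\mathbb A_j}(z)$, and strong LP duality applied to the polyhedra $\mathbb A_j$ and $\Xi$ provides the dual representations
\[
\sigma_{\mathbb A_j}(y)=\inf\{\langle\theta,b_j\rangle:\theta\succeq0,\;A_j^\top\theta=y\},\qquad \sigma_\Xi(\upsilon)=\inf\{\langle\gamma,f\rangle:\gamma\succeq0,\;C^\top\gamma=\upsilon\}.
\]
Because the outer problem minimizes $s_l$ subject to an upper bound whose right-hand side is itself an infimum over $\theta$ and $\gamma$, I can interchange the infima and promote $\theta_{lj}\succeq0$ and $\gamma_{lj}\succeq0$ to joint decision variables, subject to $A_j^\top\theta_{lj}=z_{lj}-\upsilon_{lj}$ and $C^\top\gamma_{lj}=\upsilon_{lj}$. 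Adding these two equalities gives $z_{lj}=A_j^\top\theta_{lj}+C^\top\gamma_{lj}$, which allows me to eliminate $z_{lj}$ and $\upsilon_{lj}$ from the program. Using $\langle z_{lj},\xi^l\rangle=\langle\theta_{lj},A_j\xi^l\rangle+\langle\gamma_{lj},C\xi^l\rangle$, the first constraint of \eqref{eq: finite convex program} simplifies to $1+\langle\theta_{lj},b_j-A_j\xi^l\rangle+\langle\gamma_{lj},f-C\xi^l\rangle\le s_l$, and the dual-norm constraint becomes $\|\textup{pr}_k^{\bm d}(A_j^\top\theta_{lj}+C^\top\gamma_{lj})\|_*\le\lambda_k$, matching \eqref{eq: best-case probability}.

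The main obstacle will be rigorously justifying the interchange of the outer minimization with the implicit infima defining $\sigma_{\mathbb A_j}$ and $\sigma_\Xi$, so that introducing $\theta_{lj}$ and $\gamma_{lj}$ as decision variables preserves the optimal value rather than producing a loose relaxation; this amounts to an epigraphic argument that exploits the monotonicity of the constraint in $s_l$ together with feasibility of the LP duals, which is guaranteed by the nonemptiness of $\mathbb A_j\cap\Xi$ and nonemptiness of $\Xi$. A minor technical point is to handle the case $j=0$ uniformly with $j\in[m]$: choosing $\theta_{l0}=0$, $\gamma_{l0}=0$ and discarding the prefactor $1$ associated with the value of $h_0$ recovers exactly the constraint $s_l\ge 0$ already present in the statement.
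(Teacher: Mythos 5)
Your proposal is correct and follows essentially the same route as the paper's proof: the identical decomposition $\mathds 1_{\mathbb A}=\max\{1-\chi_{\mathbb A_1},\ldots,1-\chi_{\mathbb A_m},0\}$, an appeal to Proposition \ref{prop:finite:convex:program}, LP duality to rewrite $[-h_j]^*$ and $\sigma_\Xi$, elimination of $z_{lj},\upsilon_{lj}$ via $z_{lj}=A_j^\top\theta_{lj}+C^\top\gamma_{lj}$, and reduction of the extra zero function's constraint to $s_l\ge0$ using $f-C\xi^l\succeq0$. The only cosmetic difference is your phrasing that the $j=0$ conjugate "pins $z_{l0}=\upsilon_{l0}=0$" — it only forces $z_{l0}=\upsilon_{l0}$, with zero being the optimal choice — but this does not affect the conclusion.
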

\begin{proof}
To prove the theorem, we exploit the fact that
\begin{align*}
     \sup_{P\in\mathcal T_1( Q,\bm\varepsilon)} P\left(\xi\in\mathbb A\right)=
      \sup_{P\in\mathcal T_1( Q,\bm\varepsilon)} \mathbb E_P[\mathds 1_{\mathbb A}]. 
\end{align*}
and use Proposition \ref{prop:finite:convex:program} with appropriate loss functions $h_j$ to recover the indicator function $\mathds 1_{\mathbb A}$. To this end, let $h_j:=1-\chi_{\mathbb A_j}$ for each $j\in[m]$, set $h_{m+1}:=0$, and note that
\begin{align*}
    \mathds 1_{\mathbb A}(\xi)&=\mathds 1_{\mathbb A_1\cup\cdots\cup\mathbb A_m}(\xi)=\max\{h_1(\xi),\ldots,h_{m+1}(\xi)\}=\max\{1-\chi_{\mathbb A_1}(\xi),\ldots,1-\chi_{\mathbb A_m}(\xi),0\}
\end{align*}
for all $\xi\in\Rat{d}$. The conjugates of the first $m$ functions inside the max are 
\begin{align}
    [-h_j]^*(z)=\sup_{\xi\in\mathbb A_j}\;\langle z,\xi \rangle+1 = \begin{cases}
        \underset{\xi}{\sup}\;\langle z,\xi\rangle+1\\
        \text{s.t. } A_j\xi\preceq b_j
    \end{cases}
    = \begin{cases}
        \underset{\theta\succeq0}{\inf}\;\langle \theta,b_j \rangle+1\\
        \text{s.t. } A_j^\top\theta= z.
    \end{cases} \label{eq: conjugate of hj}
\end{align}
Here the last equality follows from the assumption that $\mathbb A_j$ is nonempty and linear programming duality. 
We also express the support function of $\Xi$ as  
\begin{align}
    \sigma_\Xi(\upsilon)=\begin{cases}
\underset{\xi}{\sup} \;\langle \upsilon,\xi\rangle \\
\text{s.t.}\; C\xi\preceq f
    \end{cases}
    = \begin{cases}
       \underset{\gamma\succeq0}{\inf}\;\langle \gamma,f \rangle \\
       \text{s.t.}\; C^\top \gamma=\upsilon,
    \end{cases}
    \label{eq: support function Xi}
\end{align}
where the last equality follows from linear programming duality. 

Substituting the expressions in \eqref{eq: conjugate of hj} and \eqref{eq: support function Xi} into the first set of constrainst in \eqref{eq: finite convex program}, we get  for each $l\in[M]$ and $j\in[m]$ the constraints 
\begin{align*}
    [-h_j]^*(z_{lj}& -\upsilon_{lj})+\sigma_{\Xi}(\upsilon_{lj})-\langle z_{lj},{\xi}^l\rangle \equiv 1+\langle \theta_{lj},b_j\rangle+\langle \gamma_{lj},f\rangle- \langle z_{lj},\xi^l \rangle\le s_l 
\end{align*}
and $C^\top\gamma_{lj}=\upsilon_{lj}$, $
A_j^\top\theta_{lj} = z_{lj}-\upsilon_{lj}$, $\gamma_{lj}\succeq0$, $\theta_{lj}\succeq0$, which are equivalent to $ z_{lj}= A_j^\top\theta_{lj}+C^\top\gamma_{lj}$, $\gamma_{lj}\succeq0$, $\theta_{lj}\succeq0$. Substituting also the last expression for $z_{lj}$ in the  above inequality constraint and the second set of constraints in \eqref{eq: finite convex program}, yields for each $l\in[M]$, $k\in[n]$, and $j\in[m]$ the equivalent set of constraints  
\begin{align*}
1 +\langle \theta_{lj},b_j\rangle+\langle \gamma_{lj},f \rangle-\langle A_j^\top\theta_{lj}+C^\top\gamma_{lj},\xi^l \rangle  & \equiv 1+ \langle \theta_{lj}, b_j-A_j\xi^l\rangle + \langle \gamma_{lj},f-C\xi^l \rangle\le s_l \\
\left\| \textup{pr}_k^{\bm d}(A_j^\top\theta_{lj}+C^\top\gamma_{lj})\right\|_* &  \le\lambda_k,\; \gamma_{lj}\succeq0,\; \theta_{lj} \succeq0.
\end{align*}
%
%
Similarly, we find that $[-h_{m+1}]^*(z)=0$ if $z=0$ and $+\infty$ otherwise. Exploiting \eqref{eq: support function Xi} and using similar steps as above, for each $l$ and $k$, the corresponding constraints in \eqref{eq: finite convex program} become 
 %
\begin{align*}
    \inf_{\gamma_{l,m+1}\succeq 0} \langle \gamma_{l,m+1}, f - C\xi^l\rangle  &\le s_l \\
    \left\| {\rm pr}_k^{\bm d}(C^\top \gamma_{l,m+1})\right\|_* &\le \lambda_k, 
\end{align*}
%
Since $f-C\xi^l\succeq0$, the expression on the left-hand side of the first constraint attains its infimum for $\gamma_{l,m+1}=0$ while automatically satisfying the second constraint. Hence, we get the equivalent constraints  $s_l\ge0$ for all $l\in[M]$. This establishes the desired result.
\end{proof}
Note that besides generalizing \cite[Corollary 5.3(ii)]{PME-DK:17} from Wasserstein ambiguity balls to MTHs, Theorem \ref{thm:uncertainty:quantification} also generalizes the evaluation of worst/best case probability from polyhedral sets to the union of polyhedral sets. 

Instead of assessing how safe the set $\mathbb A$ can be, we may also want to know with high confidence how likely it is for  $\xi$ to lie outside $\mathbb A$. This yields the uncertainty quantification problem
\begin{align}
        \sup_{P\in\mathcal T_1( Q,\bm\varepsilon)} P \left(\xi\notin\mathbb A \right),
    \label{eq: unsafety probability}
\end{align}
where $\mathbb A:=\cup_{j=1}^m \mathbb A_j$ and each $\mathbb A_j$ is a polyhedral set. The next result evaluates this worst-case probability. 
\begin{corollary} 
    \longthmtitle{Uncertainty quantification for complements of unions of polyhedral sets} Assume that $P_\xi$ is supported on the nonempty polyhedral set  $\Xi:=\{\xi\in\mathbb R^d: C\xi\preceq f\}$ and let $\mathbb A_j:=\{\xi\in\mathbb R^d:\langle a_j^l,\xi \rangle< b_j^l\;\textup{for all}\; l\in[\alpha_j]\}$, $j\in[m]$. Then, under Assumption \ref{assumption:P_hat}, the value of the program
    \begin{align*}
         \inf_{\underset{l\in[M], q\in Q}{\bm\lambda,s_l,\gamma_{lq},\theta_{lq}}}\; & \langle\bm\lambda,\bm\varepsilon\rangle+\sum_{l=1}^{M} \vartheta_l s_l \\
        {\rm s.t.}\quad \; & 1-\langle \theta_{lq},b_{q}-A_{q}  \xi^l \rangle + \langle \gamma_{lq},f -C \xi^l\rangle \le s_l && l\in[M],\; q\in Q  \\
            &\left\| \textup{pr}_k^{\bm d}\left(C^\top\gamma_{lq}-A_ q^\top\theta_{lq}\right) \right\|_*\le\lambda_k && l\in[M],\; q\in Q,\; k\in[n]  \\
            & \gamma_{lq}\succeq0,\;\theta_{lq}\succeq0, \; s_l\ge0 && l\in[M],\; q\in Q,\; k\in[n] 
    \end{align*}
    is equal to the probability \eqref{eq: unsafety probability}. Here $\bm\lambda:=(\lambda_1,\ldots,\lambda_n)$, $\bm d:=(d_1,\ldots,d_n)$ and for any $q:=(q_1,\ldots,q_m)\in\prod_{j=1}^m[\alpha_j]$, $A_q\in \mathbb R^{m\times d}$ is the matrix formed by concatenating the row vectors  $(a_j^{q_j})^\top$ and $b_q:=(b_1^{q_1},\ldots, b_m^{q_m})$. The set $Q$ comprises all indices  $q\in\prod_{j=1}^m[\alpha_j]$ for which the sets $\{\xi\in\mathbb R^d: A_q\xi \succeq b_q\}$ have nonempty intersection with $\Xi$. \label{cor: worst-case probability}
\end{corollary}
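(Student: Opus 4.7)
The strategy is to rewrite $\mathbb A^c$ as a finite union of polyhedral sets and then invoke Theorem~\ref{thm:uncertainty:quantification}, matching the resulting program against the one claimed.

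First I would manipulate the set $\mathbb A^c$. Since $\mathbb A_j=\{\xi:\langle a_j^l,\xi\rangle<b_j^l\text{ for all }l\in[\alpha_j]\}$, its complement is the union $\mathbb A_j^c=\bigcup_{l\in[\alpha_j]}\{\xi:\langle a_j^l,\xi\rangle\ge b_j^l\}$. Consequently,
\begin{align*}
\mathbb A^c=\bigcap_{j=1}^m\mathbb A_j^c=\bigcap_{j=1}^m\bigcup_{l\in[\alpha_j]}\{\xi:\langle a_j^l,\xi\rangle\ge b_j^l\}=\bigcup_{q\in\prod_j[\alpha_j]}\bigcap_{j=1}^m\{\xi:\langle a_j^{q_j},\xi\rangle\ge b_j^{q_j}\},
\end{align*}
where the last equality uses the distributivity of intersections over unions. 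With the notation of the corollary, each inner intersection equals $\{\xi\in\Rat{d}:-A_q\xi\preceq -b_q\}$, so $\mathbb A^c$ is a finite union of polyhedral sets. The indices $q$ for which this polyhedron is disjoint from $\Xi$ contribute zero to $P(\xi\notin\mathbb A)$, so we may restrict to $q\in Q$ without changing the worst-case value.

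Next I would apply Theorem~\ref{thm:uncertainty:quantification} with the union $\bigcup_{q\in Q}\widetilde{\mathbb A}_q$, where $\widetilde{\mathbb A}_q:=\{\xi:\widetilde A_q\xi\preceq \widetilde b_q\}$ with $\widetilde A_q:=-A_q$ and $\widetilde b_q:=-b_q$. By construction each $\widetilde{\mathbb A}_q$ has nonempty intersection with $\Xi$, so the hypotheses of the theorem are met. Substituting $\widetilde A_q$ and $\widetilde b_q$ into the reformulation yields the constraints
\begin{align*}
1+\langle\theta_{lq},\widetilde b_q-\widetilde A_q\xi^l\rangle+\langle\gamma_{lq},f-C\xi^l\rangle&\le s_l,\\
\bigl\|\textup{pr}_k^{\bm d}(\widetilde A_q^\top\theta_{lq}+C^\top\gamma_{lq})\bigr\|_*&\le\lambda_k,
\end{align*}
which simplify to the expressions $1-\langle\theta_{lq},b_q-A_q\xi^l\rangle+\langle\gamma_{lq},f-C\xi^l\rangle\le s_l$ and $\|\textup{pr}_k^{\bm d}(C^\top\gamma_{lq}-A_q^\top\theta_{lq})\|_*\le\lambda_k$ claimed in the statement, together with the nonnegativity of $\gamma_{lq}$, $\theta_{lq}$, and $s_l$.

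The only subtle step is justifying that passing from strict to non-strict inequalities in the complements $\mathbb A_j^c$ does not alter the worst-case probability. Because each candidate distribution $P\in\mathcal T_1(Q,\bm\varepsilon)$ assigns probability to Borel sets and because $\{\xi:\langle a_j^l,\xi\rangle\ge b_j^l\}$ is the closure of $\{\xi:\langle a_j^l,\xi\rangle>b_j^l\}$, this identity is a set-theoretic consequence of working with complements of open halfspaces; no probabilistic argument is needed. With this bookkeeping in place, the reformulation inherited from Theorem~\ref{thm:uncertainty:quantification} coincides exactly with the program in the statement, completing the proof.
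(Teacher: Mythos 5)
Your proposal is correct and follows essentially the same route as the paper's own proof: De Morgan's law plus distributivity to write $\mathbb A^c$ as the union of the polyhedra $\{\xi:-A_q\xi\preceq -b_q\}$, discarding the indices whose polyhedron misses $\Xi$, and then invoking Theorem~\ref{thm:uncertainty:quantification} with $A_j,b_j$ replaced by $-A_q,-b_q$. The final remark about strict versus non-strict inequalities is harmless but unnecessary, since the complement of the open halfspace $\{\langle a_j^l,\xi\rangle<b_j^l\}$ is exactly the closed halfspace $\{\langle a_j^l,\xi\rangle\ge b_j^l\}$, so the identity is already exact.
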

\begin{proof}
    Since each polyhedral set $\mathbb A_j$ is the intersection of the  half-spaces $\mathcal A_j^{q_{j}}:= \{\xi\in\mathbb R^d: \langle a_j^{q_j}, \xi \rangle<b_j^{q_j}\}$, we have that $\mathbb A_j= \cap_{q_j=1}^{\alpha_j}\mathcal A_j^{q_{j}}$. Taking also into account that 
    \begin{align*}
       \sup_{P\in\mathcal T_1( Q,\bm\varepsilon)} P\left(\xi\notin\mathbb A\right) = \sup_{P\in\mathcal T_1( Q,\bm\varepsilon)} P\left(\xi \in\mathbb A^c\right)
    \end{align*}
    and that $\mathbb A=\cup_{j=1}^m \mathbb A_j$, we get
    \begin{align*}
        \mathbb A^c=\cap_{j=1}^m {\mathbb A_j}^c  
                   =\cap_{j=1}^m (\cap_{q_j=1}^{\alpha_j}\mathcal A_j^{q_{j}})^c & = \cap_{j=1}^m \cup_{q_j=1}^{\alpha_j}{(\mathcal A_j^{q_{j}})}^c =\cup_{(q_1,\ldots,q_m)\in [\alpha_1]\times\cdots\times[\alpha_m]}\cap_{j=1}^m ({\mathcal A_j^{q_{j}}})^c,
    \end{align*}
    which follows from De Morgan's law and the distributivity between unions and intersections.  
    Denoting $\mathbb B_q:=\cap_{j=1}^m {(\mathcal A_j^{q_{j}})}^c$, we deduce that $\mathbb A^c$ is the union of the sets $\mathbb B_q$ where $q:=(q_1,\ldots,q_m)\in [\alpha_1]\times\cdots\times[\alpha_m]$. These sets are equivalently expressed as
    \begin{align}
        \mathbb B_q =\{\xi\in\mathbb R^d: \langle a_j^{q_j} , \xi \rangle \ge b_j^{q_j}\;
        \textup{for all}\;j\in[m]\} & =\{ \xi\in\mathbb R^d: A_q\xi\succeq b_q  \}\nonumber \\
        &= \{ \xi\in\mathbb R^d: -A_q\xi\preceq\bm -b_q  \}.
        \label{eq: proof 2 development 2}
    \end{align}
    Keeping only the indexes $q$ for which $\mathbb B_q$ has a nonempty intersection with $\Xi$, and invoking Theorem \ref{thm:uncertainty:quantification}, we obtain the worst-case probability  \eqref{eq: unsafety probability} by replacing $A_j$ and $b_j$ in the first and second set of constraints in  \eqref{eq: best-case probability} with $-A_q$ and $-b_q$ from \eqref{eq: proof 2 development 2}, respectively.  
\end{proof}



\section{Distributionally robust chance-constrained problems} \label{sec:DRCCP}

In this section, we generalize tractable reformulations for chance-constrained problems over Wasserstein ambiguity balls to MTHs. We consider optimization problems that may have multiple chance constraints (see e.g., \cite{GS-LF-MM:13,JKS:70}). Such problems are typical in multistage decision making \cite[Chapter 3]{AS-DD-AR:14}, including stochastic model predictive control \cite{LH-KW-MZ:20,MF-MM-FD:24,AM:16}. Introducing several chance constraints provides the freedom to assign higher tolerance levels to design requirements that are softer than others. For data-driven problems, verifying these constraints individually with high confidence may require accurate knowledge of how the probability mass is distributed across different regions of the uncertainty domain. Hedging against such distribution imperfections can be addressed by considering an ambiguity set that contains the data-generating distribution with high confidence. This further motivates our consideration of MTHs, since they share refined probabilistic guarantees of containing the true distribution. On the other hand, the alternative of grouping all individual chance constraints into a single probabilistic constraint may result in overly conservative decisions or even infeasibility, especially if they are violated for disjoint events.

We build on the approach in \cite{HRA-CA-JL:19}, which provides reformulations of problems with a single distributionally robust chance constraint over Wasserstein ambiguity balls. Here, we are interested in solving problems of the form
\begin{equation}
\begin{aligned} \label{eq:CC:problem}
    \inf_{x\in \mathcal{X}} \; & \langle g, x \rangle && \\
    \text{s.t.}\; & P_\xi(f_i(x,\xi)\le 0)\ge 1-\alpha_i && i\in[I],
\end{aligned}
\end{equation}
where $\mathcal X\subset\Rat{\ell}$, $\xi\in\Xi\subset\Rat{d}\equiv\Rat{d_1}\times\cdots\times\Rat{d_n}$ is a random variable, $g\in\mathbb R^\ell$, $I\in\mathbb N$, and each  $f_i:\mathcal X\times\Xi\to\Rat{}$, $i\in[I]$ designates a chance constraint that needs to be fulfilled with probability at least $1-\alpha_i\in (0,1)$. We further assume that $\Xi$ is closed and $\mathcal X$ is closed and convex. 
%
%
Using the value at risk (see Section~\ref{sec:preleminaries}), we can compactly write  \eqref{eq:CC:problem} as 
\begin{equation} \label{eq:VaR:problem}
\begin{aligned}
    \inf_{x\in\mathcal X}\; & \langle g,x\rangle && \\
        \text{s.t.} \; &  {\rm VaR}_{1-\alpha_i}^{P_\xi}(f_i(x,\xi))\le0 && i\in[I].
\end{aligned}
\end{equation}
This optimization problem may be non-convex, even when the functions $f_i$ are convex. To address this issue, in analogy to \cite{HRA-CA-JL:19}, we approximate the chance constraints using the conditional value at risk (CVaR) of $f(x,\xi)$.
From the definition of the CVaR, we have  
\begin{align*}
\mathrm{CVaR}_{1-\alpha_i}^{P_\xi}(f(x,\xi))\le 0 \implies  {\rm VaR}_{1-\alpha_i}^{P_\xi}(f(x,\xi))\le 0.
\end{align*}
Therefore, using the equivalent characterization \eqref{CVaR:equiv} of the CVaR, we can approximate \eqref{eq:VaR:problem} by the CVaR-constrained  problem
\begin{equation}
    \begin{aligned}
        \begin{aligned}
   \inf_{x\in\mathcal X} \; & \langle g, x \rangle && \\
   {\rm s.t.}\; & \inf_{\tau\in\mathbb R}\mathbb E_{P_\xi}[(f_i(x,\xi)+\tau)_+-\tau\alpha_i]\le 0 && i\in[I]. \end{aligned}
    \end{aligned} \label{eq:CVaR:C:problem}
\end{equation}

Our goal is to provide tractable reformulations for a distributionally robust version of this problem.
We will build on the reformulations of the previous section to solve it when the distribution $P_\xi$ belongs to the MTH $\mathcal T_p( Q,\bm\eps)$. We are interested in solving the distributionally robust chance-constrained problem 
\begin{align}
\begin{aligned}
  \inf_{x\in\mathcal X} \; & \langle g, x \rangle && \\
   {\rm s.t.}  \;  & \sup_{P\in \mathcal T_p( Q,\bm\varepsilon)} \inf_{\tau\in\mathbb R}\mathbb E_{P}[(f_i(x,\xi)+\tau)_+-\tau\alpha_i]\le 0 && i\in[I], \end{aligned}
    \label{eq:DRCCP:CVaR}
\end{align}
which represents a robustified version of \eqref{eq:CVaR:C:problem} against discrepancies in the reference distribution.
Tractable reformulations for this distributionally robust chance-constrained problem rely on obtaining a tractable expression for each CVaR constraint.  We make the following assumption regarding the functions $f_i$, $i\in[I]$. 
\begin{assumption}
 \longthmtitle{Constraint function class} \label{assumption:convex:concave}
 The functions $f_i:\mathcal X\times\Xi\to \Rat{}$ satisfy the following properties: \\
\noindent (i) For each $i\in[I]$ and $\xi\in\Xi$, the function $x\to f_i(x,\xi)$ is convex; \\
\noindent (ii) There exists $r\in[0,p)$ such that for each $i\in[I]$ and $x\in\mathcal X$, the function  $\xi\to f_i(x,\xi)$ is continuous and belongs to $\mathcal G_r$. 
\end{assumption}

Suppressing for the moment the subscript $i$ of the functions in the constraints and their satisfaction probabilities, we seek a tractable characterization for the feasible set 
\begin{align}
      \Big\{ x\in\mathcal X: \sup_{P\in\mathcal T( Q,\bm\eps)} \inf_{\tau\in\mathbb R}\mathbb E_{P}[(f(x,\xi)+\tau)_+-\tau\alpha]\le 0 \Big\}. \label{eq:DR:CVaR:constraints:1}
\end{align}
of a single chance-constraint where the function $f$ satisfies Assumption~\ref{assumption:convex:concave}. 

Determining tractable reformulations for \eqref{eq:DR:CVaR:constraints:1} hinges on interchanging the order between the sup and inf of the distributionally robust constraint. This possibility is guaranteed by the following result, which is based on the fact that MTHs are weakly compact. This result extends \cite[Lemma I.V.2]{HRA-CA-JL:18} to the case where the ambiguity set is the MTH \eqref{transport:hyperrectangle:Tp}. It also entails the consideration of a broader class of constraints, since in \cite{HRA-CA-JL:18} the mappings $\xi\mapsto f(x,\xi)$ are assumed to be bounded. Here, instead, they only need to respect the growth condition of Assumption~\ref{assumption:convex:concave}(ii).

\begin{lemma} \label{lemma:mini-max}
    \longthmtitle{Min-max equality for the $\bm{{\rm CVaR}}$ over MTHs}
    Suppose that Assumption \ref{assumption:convex:concave} holds. Then:
    
    \noindent (i) For all $x\in\mathcal{X}$,
    \begin{align}
        \sup_{P\in\mathcal T_p( Q,\bm\varepsilon)}\inf_{\tau\in\mathbb R} \mathbb E_{P}[(f(x,\xi)+\tau)_+-\tau\alpha]=\inf_{\tau\in\mathbb R} \sup_{P\in\mathcal T_p( Q,\bm\varepsilon)} \mathbb E_{P}[(f(x,\xi)+\tau)_+-\tau\alpha]; \label{eq:min-max}
    \end{align}
    \noindent (ii) The infimum on the right-hand-side of \eqref{eq:min-max} with  respect to $\tau$ is attained.
\end{lemma}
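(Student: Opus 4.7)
My plan is to prove part (i) via an application of Sion's minimax theorem and part (ii) by establishing coercivity and convexity of the resulting outer objective.

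For part (i), I would verify that the function $g(P,\tau):=\mathbb E_P[(f(x,\xi)+\tau)_+-\tau\alpha]$ meets the hypotheses of Sion's theorem on the product $\mathcal T_p(Q,\bm\varepsilon)\times\mathbb R$. The MTH is convex, being defined by finitely many linear constraints on couplings, and weakly compact by Theorem \ref{thm:weak:compactness}, while $\mathbb R$ is convex. For fixed $P$, the map $\tau\mapsto g(P,\tau)$ is convex and globally $1$-Lipschitz since $(\cdot)_+$ is $1$-Lipschitz, hence continuous and quasi-convex. For fixed $\tau$, $g(\cdot,\tau)$ is linear (thus quasi-concave) in $P$. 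Moreover, because $f(x,\cdot)$ is continuous and lies in $\mathcal G_r$ with $r<p$ by Assumption \ref{assumption:convex:concave}(ii), the integrand $\xi\mapsto (f(x,\xi)+\tau)_+-\tau\alpha$ inherits both properties: the bound $|f(x,\xi)|\le C(1+\rho(\zeta,\xi)^r)$ carries over to $|(f(x,\xi)+\tau)_+-\tau\alpha|\le C(1+\rho(\zeta,\xi)^r)+|\tau|(1+\alpha)$, still in $\mathcal G_r$. Theorem \ref{thm:existence:inner:maximization}(ii) then yields weak continuity of $P\mapsto g(P,\tau)$ on the MTH, and Sion's theorem delivers \eqref{eq:min-max}.

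For part (ii), set $\Phi(\tau):=\sup_{P\in\mathcal T_p(Q,\bm\varepsilon)} g(P,\tau)$. The plan is to show $\Phi$ is finite-valued, convex, and coercive on $\mathbb R$, so that a minimizer exists. Finiteness for every $\tau$ follows from Proposition \ref{prop:finite:value} applied to the integrand above (which lies in $\mathcal G_r\subset\mathcal G_p^{\rm up}$). Convexity of $\Phi$ is inherited as the pointwise supremum of the convex family $\{g(P,\cdot)\}_{P\in\mathcal T_p(Q,\bm\varepsilon)}$; together with finiteness on $\mathbb R$, this gives continuity and hence lower semicontinuity. For coercivity, I would pick any $P_0\in\mathcal T_p(Q,\bm\varepsilon)$ (for instance $P_0=Q$) and use the pointwise bounds $(f+\tau)_+\ge f+\tau$ and $(f+\tau)_+\ge 0$ to obtain
\[
\Phi(\tau)\ge \mathbb E_{P_0}[f(x,\xi)]+\tau(1-\alpha)\quad\text{and}\quad \Phi(\tau)\ge -\tau\alpha.
\]
Since $\alpha\in(0,1)$, the first bound forces $\Phi(\tau)\to+\infty$ as $\tau\to+\infty$ and the second as $\tau\to-\infty$. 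A continuous coercive function on $\mathbb R$ attains its infimum, completing the argument.

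The main obstacle I anticipate is the weak continuity of $P\mapsto g(P,\tau)$: this is what allows Sion's theorem to apply without compactness of the $\tau$-domain and what extends the bounded-integrand argument of \cite{HRA-CA-JL:18} to the present unbounded setting. Theorem \ref{thm:existence:inner:maximization}(ii) provides exactly this property, but only because Assumption \ref{assumption:convex:concave}(ii) guarantees the strict growth gap $r<p$ the theorem requires, and because the $(\cdot)_+$ and $-\tau\alpha$ operations preserve the $\mathcal G_r$ class up to $\tau$-dependent additive constants.
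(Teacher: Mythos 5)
Your proposal is correct, but it follows a genuinely different route from the paper for part (i). The paper does not invoke Sion's theorem: it first generalizes the Shapiro--Kleywegt saddle-point result \cite[Theorem 2.1]{AS-AK:02} to unbounded integrands (Theorem~\ref{thm:stochastic:maxmin}, via closedness of $\cup_{P\in\mathcal A^\star(\bar x)}\partial g_P(\bar x)$ under sequential compactness of $\mathcal A$ and continuity of $P\mapsto g(x,P)$), specializes it to MTHs in Corollary~\ref{corollary:minmax}, and then cites the argument of \cite[Lemma IV.2]{HRA-CA-JL:18}. Both routes rest on the same two structural facts you identify --- weak compactness of $\mathcal T_p(Q,\bm\varepsilon)$ (Theorem~\ref{thm:weak:compactness}) and weak continuity of $P\mapsto\mathbb E_P[(f(x,\xi)+\tau)_+-\tau\alpha]$ (Theorem~\ref{thm:existence:inner:maximization}(ii), applicable because the $\mathcal G_r$ membership with $r<p$ from Assumption~\ref{assumption:convex:concave}(ii) survives the $(\cdot)_+$ truncation up to a $\tau$-dependent constant). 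Your Sion argument is arguably cleaner in one respect: Corollary~\ref{corollary:minmax} takes as a \emph{hypothesis} that the outer infimum is attained, so the paper's logic implicitly requires part (ii) to be settled before part (i) can be invoked, whereas Sion's theorem needs no such attainment a priori and only compactness of the variable being maximized. Conversely, the paper's saddle-point machinery directly produces a worst-case distribution $\bar P$ paired with the optimal $\bar\tau$, which your route recovers only a posteriori (attainment of the sup at $\bar\tau$ via upper semicontinuity on the weakly compact MTH). Your coercivity proof of part (ii) --- the two lower bounds $\Phi(\tau)\ge\mathbb E_{P_0}[f(x,\xi)]+\tau(1-\alpha)$ and $\Phi(\tau)\ge-\tau\alpha$ together with continuity of the finite convex function $\Phi$ --- is precisely the content the paper delegates to the cited external proof, and it is valid here since $\mathbb E_{P_0}[f(x,\xi)]$ is finite for any $P_0$ in the MTH.
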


The proofs of this lemma and the other results of this section are given in the Appendix. Next, we exploit DRO duality for MTHs to derive a tractable characterization of the feasible set \eqref{eq:DR:CVaR:constraints:1}.  The following result characterizes this set through a finite number of convex constraints.
\begin{prop} \longthmtitle{Equivalent characterization of the feasible set  \eqref{eq:DR:CVaR:constraints:1}}  \label{prop:reformulations:CVaR:convex:functions}
    Under Assumptions \ref{assumption:P_hat} and \ref{assumption:convex:concave}, \eqref{eq:DR:CVaR:constraints:1} consists of all $x\in\mathcal X$ for which the convex constraints 
    \begin{align*}
    \begin{aligned}
        &\langle\boldsymbol{\lambda, \varepsilon}\rangle+ \sum_{l=1}^{M}\vartheta_l s_l\le \tau\alpha && \\
       & \sup_{\xi\in\Xi}\Big\{f(x,\xi)+\tau-\sum_{k=1}^n\lambda_k \|\xi_k-{\xi}_k^l\|^p\Big\}\le s_l  &&  l\in[M]
    \end{aligned}
    \end{align*}
    are met for some $\bm\lambda:=(\lambda_1,\ldots,\lambda_n)\succeq0$ and $s_l\ge 0$, where $l\in[M]$.
\end{prop}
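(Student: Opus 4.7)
The plan is to chain the min-max interchange of Lemma~\ref{lemma:mini-max} with the DRO duality of Proposition~\ref{prop:dual}, and then linearize the inner suprema through epigraphic auxiliary variables $s_l$. The key technical step will be rewriting the positive-part term $(f(x,\xi)+\tau)_+$ so that the resulting constraints match the stated form, which no longer features $(\cdot)_+$.

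First, by Lemma~\ref{lemma:mini-max}, the point $x$ belongs to the feasible set \eqref{eq:DR:CVaR:constraints:1} if and only if there exists a minimizing $\tau\in\mathbb{R}$ such that
$\sup_{P\in\mathcal{T}_p( Q,\bm\varepsilon)}\mathbb{E}_P[(f(x,\xi)+\tau)_+]\le\tau\alpha$.
I would then apply Proposition~\ref{prop:dual} to the integrand $h(\xi):=(f(x,\xi)+\tau)_+$, whose upper semicontinuity follows from continuity of $f$ in $\xi$ (Assumption~\ref{assumption:convex:concave}(ii)) and whose $L^1( Q)$-integrability is automatic because $ Q$ is a finite convex combination of Dirac measures. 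The dual expression over the discrete reference $ Q=\sum_{l=1}^M\vartheta_l\delta_{\xi^l}$ then yields the equivalent requirement: there exists $\bm\lambda\succeq 0$ such that
$\langle\bm\lambda,\bm\varepsilon\rangle+\sum_{l=1}^M\vartheta_l\sup_{\xi\in\Xi}\{(f(x,\xi)+\tau)_+-\sum_{k=1}^n\lambda_k\|\xi_k-\xi_k^l\|^p\}\le\tau\alpha$.
Introducing epigraphic majorants $s_l$ for each inner supremum separates this into the linear inequality $\langle\bm\lambda,\bm\varepsilon\rangle+\sum_{l=1}^M\vartheta_l s_l\le\tau\alpha$ together with $M$ bounds of the form $s_l\ge\sup_{\xi\in\Xi}\{(f(x,\xi)+\tau)_+-\sum_{k=1}^n\lambda_k\|\xi_k-\xi_k^l\|^p\}$.

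The main remaining step is to absorb the positive part. Writing $\phi_l(\xi):=\sum_{k=1}^n\lambda_k\|\xi_k-\xi_k^l\|^p$, observe that $\bm\lambda\succeq 0$ makes $\phi_l$ nonnegative on $\Xi$ with $\phi_l(\xi^l)=0$, and $\xi^l\in\Xi$ since $ Q$ is supported on $\Xi$. Using $(u)_+=\max\{u,0\}$ and commuting suprema with pointwise maxima,
\begin{align*}
\sup_{\xi\in\Xi}\{(f(x,\xi)+\tau)_+-\phi_l(\xi)\}
&=\sup_{\xi\in\Xi}\max\{f(x,\xi)+\tau-\phi_l(\xi),\,-\phi_l(\xi)\} \\
&=\max\Big\{\sup_{\xi\in\Xi}\{f(x,\xi)+\tau-\phi_l(\xi)\},\,0\Big\}.
\end{align*}
Hence the epigraphic bound on $s_l$ is equivalent to the conjunction $s_l\ge 0$ and $s_l\ge\sup_{\xi\in\Xi}\{f(x,\xi)+\tau-\phi_l(\xi)\}$. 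Collecting these over $l\in[M]$ recovers exactly the system stated in the proposition, and convexity of each constraint in $(x,\tau,\bm\lambda,s_1,\ldots,s_M)$ follows from convexity of $f(\cdot,\xi)$ (Assumption~\ref{assumption:convex:concave}(i)) and joint affineness in $(\tau,\bm\lambda,s_l)$ inside the supremum. Since every transformation above is bidirectional, both inclusions between \eqref{eq:DR:CVaR:constraints:1} and the described set hold, which completes the argument.
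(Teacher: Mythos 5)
Your proposal is correct and follows essentially the same route as the paper's proof: min-max interchange via Lemma~\ref{lemma:mini-max}, strong duality from Proposition~\ref{prop:dual} applied to $h(\xi)=(f(x,\xi)+\tau)_+$ over the discrete reference measure, epigraphical variables $s_l$, and finally elimination of the positive part. The only difference is that you spell out the last step explicitly (using $\phi_l\ge 0$ with $\phi_l(\xi^l)=0$ to show the supremum of the positive-part expression equals the maximum of the plain supremum and $0$), whereas the paper delegates exactly this argument to the final part of the proof of \cite[Proposition IV.3]{HRA-CA-JL:18}.
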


In the next proposition, we consider the case where the function $f$ is piecewise affine with respect to $\xi$. Therefore, we derive a general reformulation when the function is expressed as the maximum of affine functions over a polyhedral set. 

\begin{prop} \longthmtitle{Equivalent characterization of \eqref{eq:DR:CVaR:constraints:1} for piecewise affine constraints} \label{prop:CVaR:piecewise:affine:functions}
    Let Assumption \ref{assumption:P_hat} hold and consider the compact set $\Xi:=\{ \xi\in\mathbb R^d: C\xi\preceq h \}$, where $C\in\Rat{q\times d}$ and $h\in\Rat{q}$. Assume also that $f(x,\xi):=\max_{j\in[m]} \langle x,A_j\xi\rangle+b_j(x)$, where $A_j\in\mathbb R^{\ell\times d}$ and each $b_j:\mathbb R^n\to\Rat{}$ is convex. Then, \eqref{eq:DR:CVaR:constraints:1} consists of all $x\in\mathcal X$ for which the constraints
    \begin{align*}
    \begin{aligned}
        & \langle\boldsymbol{\lambda, \varepsilon}\rangle+\sum_{l=1}^{M} \vartheta_l s_l\le \tau\alpha && \\
& b_j(x)+\tau+\langle A_j^\top x-C^\top \eta_{lj},  {\xi}^l\rangle+\langle{\eta_{lj}}, h\rangle \le s_l && \quad l\in[M],\; j\in[m] \\
& \|\textup{pr}_k^{\bm d}(A_j^\top x- C^\top {\eta_{lj}})\|_* \le \lambda_k && \quad l\in[M],\; j\in[m],\; k\in[n]
\end{aligned}
    \end{align*}  
    are met for some $\bm\lambda:=(\lambda_1,\ldots,\lambda_n)\succeq 0$ and $s_l\ge0$, $\eta_{lj}\succeq0$, $l\in[M]$, $j\in[m]$, with $\bm d:=(d_1,\ldots, d_n)$. 
\end{prop}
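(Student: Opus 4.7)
The plan is to apply Proposition~\ref{prop:reformulations:CVaR:convex:functions} first, so that it suffices to show that for each $l\in[M]$, the constraint
\begin{align*}
    \sup_{\xi\in\Xi}\Big\{f(x,\xi)+\tau-\sum_{k=1}^n\lambda_k\|\xi_k-\xi_k^l\|\Big\}\le s_l
\end{align*}
is equivalent to the existence of $\eta_{lj}\succeq 0$, $j\in[m]$, satisfying the two latter families of constraints in the statement (the first constraint $\langle\bm\lambda,\bm\varepsilon\rangle+\sum_l\vartheta_l s_l\le \tau\alpha$ is already inherited directly from Proposition~\ref{prop:reformulations:CVaR:convex:functions}). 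Note that the formulation implicitly fixes $p=1$, as the affine/polyhedral structure makes the dualization tractable only in that case.

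Since $f(x,\xi)=\max_{j\in[m]}\langle x,A_j\xi\rangle+b_j(x)$, the supremum over $\xi\in\Xi$ of a maximum over $j$ equals the maximum over $j$ of the individual suprema. Hence the single constraint above splits into $m$ constraints, one per $j\in[m]$:
\begin{align*}
    \sup_{\xi\in\Xi}\Big\{\langle A_j^\top x,\xi\rangle+b_j(x)+\tau-\sum_{k=1}^n\lambda_k\|\xi_k-\xi_k^l\|\Big\}\le s_l.
\end{align*}
I would then perform the translation $y:=\xi-\xi^l$, which turns the feasible set $\Xi$ into the polyhedron $\{y:Cy\preceq h-C\xi^l\}$ and decouples the norm terms from the sample atoms. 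Pulling out the constant $b_j(x)+\tau+\langle A_j^\top x,\xi^l\rangle$, the remaining inner problem is the maximization of $\langle A_j^\top x,y\rangle-\sum_k\lambda_k\|y_k\|$ subject to a nonempty polyhedral constraint.

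The crucial dualization step introduces multipliers $\eta_{lj}\succeq 0$ for the polyhedral constraint. Since $\Xi$ is a nonempty compact polyhedron, the translated feasible set is nonempty and bounded and hence linear programming strong duality (or Sion's minimax theorem applied to the Lagrangian) justifies the interchange
\begin{align*}
    \sup_{\substack{y\\ Cy\preceq h-C\xi^l}}\Big\{\langle A_j^\top x,y\rangle-\sum_{k=1}^n\lambda_k\|y_k\|\Big\}=\inf_{\eta_{lj}\succeq 0}\Big\{\langle\eta_{lj},h-C\xi^l\rangle+\sup_{y}\Big\{\langle A_j^\top x-C^\top\eta_{lj},y\rangle-\sum_{k=1}^n\lambda_k\|y_k\|\Big\}\Big\}.
\end{align*}
The unconstrained inner sup decouples across the blocks $y_k$: for each $k$, one obtains $\sup_{y_k}\{\langle\textup{pr}_k^{\bm d}(A_j^\top x-C^\top\eta_{lj}),y_k\rangle-\lambda_k\|y_k\|\}$, which by definition of the dual norm equals $0$ if $\|\textup{pr}_k^{\bm d}(A_j^\top x-C^\top\eta_{lj})\|_*\le\lambda_k$ and $+\infty$ otherwise. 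Substituting back and regrouping via $\langle A_j^\top x,\xi^l\rangle+\langle\eta_{lj},h-C\xi^l\rangle=\langle A_j^\top x-C^\top\eta_{lj},\xi^l\rangle+\langle\eta_{lj},h\rangle$ delivers precisely the two families of constraints stated in the proposition.

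The main obstacle I anticipate is the rigorous justification of strong duality and the correct handling of the ``$0$-or-$+\infty$'' collapse of the inner supremum: one needs to argue that, because the primal sup is finite (as $\Xi$ is compact and the objective continuous), the infimum over $\eta_{lj}$ is necessarily attained at a point where the norm conditions hold, which is what allows the existential reformulation $\exists\eta_{lj}\succeq 0$ rather than an inf-form. The remaining manipulations are routine algebraic rearrangements.
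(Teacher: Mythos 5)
Your proposal is correct and reaches the stated reformulation, but it dualizes the inner supremum in the opposite order from the paper, so the two arguments are worth contrasting. The paper first rewrites each term $\lambda_k\|\xi_k-\xi_k^l\|$ as $\sup_{\|\textup{pr}_k^{\bm d}(z_{lj})\|_*\le\lambda_k}\langle \textup{pr}_k^{\bm d}(z_{lj}),\xi_k-\xi_k^l\rangle$, invokes a minimax theorem over the compact dual-norm ball to swap $\sup_\xi$ with $\inf_z$, and only then applies linear programming duality to the resulting linear objective $\sup_{\xi\in\Xi}\langle A_j^\top x - z_{lj},\xi\rangle$, eliminating $z_{lj}$ in favor of $\eta_{lj}$ at the end. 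You instead dualize the polyhedral constraint first and then evaluate the remaining unconstrained supremum in closed form as the conjugate of $y\mapsto\sum_k\lambda_k\|y_k\|$, which collapses blockwise to the $0$-or-$+\infty$ dichotomy. Your route buys a more explicit computation (no minimax interchange over the dual-norm ball is needed), and the translation $y=\xi-\xi^l$ is a harmless cosmetic simplification; the paper's route keeps every duality step at the level of plain linear programs.

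The one place where your justification is looser than it should be is the strong-duality interchange itself: the primal there is the maximization of the \emph{concave, nonlinear} function $\langle A_j^\top x,y\rangle-\sum_k\lambda_k\|y_k\|$ over a polyhedron, so ``linear programming strong duality'' does not literally apply, and Sion's theorem applied naively to the Lagrangian also fails since neither $\Rat{d}$ nor $\{\eta\succeq0\}$ is compact. The correct citation is the polyhedral constraint qualification for convex programs (the objective is finite-valued on all of $\Rat{d}$, all constraints are affine, and the primal is feasible with finite value by compactness of $\Xi$), which gives both zero duality gap and dual attainment. That dual attainment is exactly what you correctly identify as the crux for passing from the infimum form to the existential form with $\eta_{lj}\succeq0$; once it is invoked, your remaining algebra, including the regrouping $\langle A_j^\top x,\xi^l\rangle+\langle\eta_{lj},h-C\xi^l\rangle=\langle A_j^\top x-C^\top\eta_{lj},\xi^l\rangle+\langle\eta_{lj},h\rangle$, matches the statement. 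So: correct, with a mislabeled but repairable duality citation, and a genuinely different (conjugate-first rather than minimax-first) organization of the same underlying computation.
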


Applying Propositions \ref{prop:reformulations:CVaR:convex:functions} and \ref{prop:CVaR:piecewise:affine:functions} to the distributionally robust ${\rm CVaR}$ constraint of \eqref{eq:DRCCP:CVaR}, we obtain the following two corollaries. These respectively yield finite-dimensional reformulations for convex distributionally robust chance-constrained problems and tractable reformulations for problems with piecewise affine constraints. 

\begin{corollary} \longthmtitle{Epigraphical reformulation of CVaR-constrained problem \eqref{eq:DRCCP:CVaR}} \label{cor:DRCCP:epi:reformulations}
    Under Assumption \ref{assumption:convex:concave}, problem \eqref{eq:DRCCP:CVaR} is equivalent to the convex program
    \begin{align*}
    \begin{aligned}
       \inf_{\underset{l\in[M],\; i\in[I]}{x\in\mathcal X,\tau_i,\bm\lambda_i,s_{li}}} \; & \langle g, x \rangle && \\
        {\rm s.t.}\quad\;\;\; & \langle\bm\lambda_i, \bm\varepsilon\rangle+\sum_{l=1}^{M}\vartheta_ls_{li}\le \tau_i\alpha_i && \quad i\in[I],\\
        & \sup_{\xi\in\Xi}\Big\{f_i(x,\xi)+\tau_i-\sum_{k=1}^n\lambda_{ki} \|\xi_k-{\xi}_k^l\|^p\Big\}\le s_{li} && \quad  l\in[M],\; i\in[I] \\
        & s_{li}\ge 0 && \quad  l\in[M],\; i\in[I].
    \end{aligned}
    \end{align*}
    \end{corollary}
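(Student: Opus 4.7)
The plan is to apply Proposition \ref{prop:reformulations:CVaR:convex:functions} separately to each of the $I$ distributionally robust CVaR constraints appearing in \eqref{eq:DRCCP:CVaR}. For each fixed $i\in[I]$, that proposition characterizes the set of $x\in\mathcal X$ satisfying the $i$th constraint as the projection onto the $x$-coordinate of those tuples $(x,\tau_i,\bm\lambda_i,s_{1i},\ldots,s_{Mi})$ that fulfill the three displayed families of constraints, after renaming the auxiliary variables with an extra subscript $i$. Since the $I$ constraints are imposed jointly on a common decision $x$, but their auxiliary variables are disjoint and privately indexed by $i$, the feasible set of \eqref{eq:DRCCP:CVaR} coincides with the projection onto $x$ of the joint feasible set of all $I$ reformulated constraint systems. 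Minimizing the same linear objective $\langle g,x\rangle$ over these equal feasible sets then yields equivalent optimization problems, both in optimal value and in the set of $x$-optimizers.

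Once equivalence is established, what remains is to verify that the reformulated program is indeed convex. The cost is linear, and the first and third families of constraints ($\langle\bm\lambda_i,\bm\varepsilon\rangle+\sum_l\vartheta_l s_{li}\le\tau_i\alpha_i$ and $s_{li}\ge 0$) are linear in the decision variables $(x,\tau_i,\bm\lambda_i,s_{li})$. For the supremum inequalities, I would argue that for each fixed $\xi\in\Xi$ and each $l\in[M]$ the map
\begin{equation*}
(x,\tau_i,\bm\lambda_i)\mapsto f_i(x,\xi)+\tau_i-\sum_{k=1}^n \lambda_{ki}\|\xi_k-\xi_k^l\|^p
\end{equation*}
is convex: convexity in $x$ is Assumption \ref{assumption:convex:concave}(i), while the dependence on $(\tau_i,\bm\lambda_i)$ is affine. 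Taking the pointwise supremum over $\xi\in\Xi$ preserves convexity, so each of the $MI$ constraints of the form $\sup_\xi\{\cdot\}\le s_{li}$ defines a convex subset of $(x,\tau_i,\bm\lambda_i,s_{li})$. Hence the reformulated program is convex.

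The only potentially delicate point is bookkeeping: I need to ensure that Proposition \ref{prop:reformulations:CVaR:convex:functions} applies verbatim to every individual constraint. This requires Assumption \ref{assumption:convex:concave} to hold constraint-by-constraint (which it does), Assumption \ref{assumption:P_hat} on the reference distribution (standing assumption of the section), and finiteness of the worst-case expectation in \eqref{eq:DR:CVaR:constraints:1} to justify the min--max interchange in Lemma \ref{lemma:mini-max}; the latter is guaranteed by Proposition \ref{prop:finite:value} applied to the translated hinge loss $(f_i(x,\xi)+\tau)_+$, whose growth exponent inherits from Assumption \ref{assumption:convex:concave}(ii). With these ingredients in place, the corollary follows as a direct aggregation of the per-constraint reformulations, without any additional analytic work beyond the convexity check above.
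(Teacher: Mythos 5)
Your proposal is correct and matches the paper's approach: the paper gives no separate proof for this corollary, presenting it precisely as the constraint-by-constraint application of Proposition~\ref{prop:reformulations:CVaR:convex:functions} with privately indexed auxiliary variables $(\tau_i,\bm\lambda_i,s_{li})$, which is exactly your argument. Your added convexity check and the remark that the min--max interchange is already secured inside Lemma~\ref{lemma:mini-max} via Assumption~\ref{assumption:convex:concave}(ii) are sound and consistent with the paper.
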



\begin{corollary} \longthmtitle{Tractable reformulation  of \eqref{eq:DRCCP:CVaR} with piecewise affine constraints}
    \label{cor:DRCCP:reformulation}
      Assume further that 
      $\Xi:=\{ \xi\in\mathbb R^d: C\xi\preceq h \}$ is compact, where $C\in\Rat{q\times d}$ and $h\in\Rat{q}$, and $f_i(x,\xi):=\max_{j\in[m]} \langle x, A_{ji}\xi\rangle+b_{ji}(x)$, where $A_{ji}\in\mathbb R^{\ell\times d}$ and each $b_{ji}:\mathbb R^\ell\to\Rat{}$ is convex. Then  \eqref{eq:DRCCP:CVaR} is equivalent to the convex optimization problem 
\begin{alignat*}{3}
\inf_{ \underset{l\in[M], j\in[m],\; i\in[I]}{x\in\mathcal X,\tau_i,\bm\lambda_i,s_{li},\eta_{lji}}}\; & \langle g, x \rangle && \\
{\rm s.t.} \quad\quad\; & \langle{\bm\lambda_i, \bm\varepsilon}\rangle+\sum_{l=1}^{M} \vartheta_l s_{li}\le \tau_i\alpha && \quad i\in[I] \\
& b_{ji}(x)+\tau_i+\langle A_{ji}^\top x-C^\top \eta_{lji}, {\xi}^l\rangle+\langle {\eta_{lji}}, h\rangle \le s_{li} && \quad l\in[M],\; j\in[m],\; i\in[I]\\
& \left\|\textup{pr}_k^{\bm d} (A_{ji}^\top x- C^\top {\eta_{lji}})\right\|_* \le \lambda_{ik} && \quad l\in[M], \; j\in[m],\; i\in[I],\; k\in[n] \\
& s_{li}\ge0 \quad \eta_{lji}\succeq0 &&  \quad l\in[M],\; j\in[m],\; i\in[I],
\end{alignat*}  
where $\bm\lambda_i:=(\lambda_{i1},\ldots,\lambda_{in})$ and $\bm d:=(d_1,\ldots, d_n)$.
\end{corollary}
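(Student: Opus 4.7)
The plan is to reduce Corollary \ref{cor:DRCCP:reformulation} to Proposition \ref{prop:CVaR:piecewise:affine:functions} constraint-by-constraint, and then assemble the resulting finite sets of convex constraints into a single optimization problem that shares the decision variable $x$.

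\textbf{Step 1: verify the hypotheses of Proposition \ref{prop:CVaR:piecewise:affine:functions} for each $f_i$.} We need Assumption \ref{assumption:convex:concave} and the piecewise-affine structure. For (i), each summand $\langle x, A_{ji}\xi\rangle$ is linear in $x$ for fixed $\xi$, and $b_{ji}(x)$ is convex by hypothesis; hence each $x\mapsto\langle x,A_{ji}\xi\rangle+b_{ji}(x)$ is convex and so is their pointwise maximum $f_i(x,\cdot)$. For (ii), for fixed $x$ the map $\xi\mapsto f_i(x,\xi)$ is a maximum of affine functions, hence continuous, and bounded on the compact polyhedron $\Xi$, so it belongs to $\mathcal G_r$ for every $r\in[0,p)$. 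The remaining structural assumptions of Proposition \ref{prop:CVaR:piecewise:affine:functions} (compactness of $\Xi$ given as a polyhedron $\{C\xi\preceq h\}$ and the $\max$-of-affine form with convex intercepts) are hypotheses of the corollary.

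\textbf{Step 2: reformulate each chance constraint.} Fix $i\in[I]$ and apply Proposition \ref{prop:CVaR:piecewise:affine:functions} to the $i$th distributionally robust CVaR constraint of \eqref{eq:DRCCP:CVaR}, with $\alpha\equiv\alpha_i$, $f\equiv f_i$, $A_j\equiv A_{ji}$, $b_j\equiv b_{ji}$. The proposition yields that this constraint is equivalent to the existence of $\tau_i\in\mathbb R$, $\bm\lambda_i=(\lambda_{i1},\dots,\lambda_{in})\succeq 0$, $s_{li}\ge 0$ and $\eta_{lji}\succeq 0$ for $l\in[M]$, $j\in[m]$, such that
\begin{align*}
\langle \bm\lambda_i,\bm\varepsilon\rangle+\sum_{l=1}^{M}\vartheta_l s_{li}&\le \tau_i\alpha_i,\\
b_{ji}(x)+\tau_i+\langle A_{ji}^\top x-C^\top\eta_{lji},\xi^l\rangle+\langle\eta_{lji},h\rangle&\le s_{li},\\
\bigl\|\textup{pr}_k^{\bm d}(A_{ji}^\top x-C^\top\eta_{lji})\bigr\|_*&\le \lambda_{ik},
\end{align*}
for all $l\in[M]$, $j\in[m]$, $k\in[n]$.

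\textbf{Step 3: assemble the $I$ reformulations.} Because the auxiliary variables $(\tau_i,\bm\lambda_i,s_{\cdot i},\eta_{\cdot\cdot i})$ for distinct $i$ are decoupled once $x$ is fixed, the original problem \eqref{eq:DRCCP:CVaR} of minimizing $\langle g,x\rangle$ over $x\in\mathcal X$ subject to all $I$ constraints can be rewritten as a single minimization over $x$ together with all the auxiliary variables indexed by $i\in[I]$, with the pooled set of constraints from Step 2. This is exactly the program displayed in the statement.

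\textbf{Expected difficulties.} The corollary is essentially a bookkeeping exercise on top of Proposition \ref{prop:CVaR:piecewise:affine:functions}, so no new analytic obstacle arises; the only things to be careful about are: (a) checking that the convexity hypothesis on $b_{ji}$ is what makes the overall feasible set (and thus the reformulated program) convex, since the objective $\langle g,x\rangle$ is linear and the remaining constraints are either affine or dual-norm ball constraints in $x$ (the $b_{ji}(x)$ enters additively in an $\le$ inequality, preserving convexity); and (b) justifying the independent introduction of auxiliary variables per $i$, which is standard because the $I$ constraints from Step 2 involve disjoint copies of the multipliers.
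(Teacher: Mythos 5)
Your proposal is correct and follows exactly the route the paper intends: the paper derives this corollary by applying Proposition \ref{prop:CVaR:piecewise:affine:functions} to each of the $I$ distributionally robust CVaR constraints (after checking that each $f_i$ satisfies the required convexity, continuity, and growth hypotheses on the compact polyhedron $\Xi$) and then pooling the per-constraint auxiliary variables, which decouple across $i$ once $x$ is fixed. Your verification of Assumption \ref{assumption:convex:concave} and the decoupling argument are exactly the (unstated) content of the paper's one-line derivation, and your use of $\alpha_i$ in the first constraint corrects an apparent typo in the displayed program.
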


\section{Clustering the product empirical distribution} \label{sec:numerical:complexity}

In the previous sections, we established tractable reformulations for various classes of DRO problems over MTHs that are centered at an atomic reference distribution $\widehat P_\xi$. Recalling our motivation from data-driven problems where the uncertainty $\xi$ consists of several independent components, in this section we turn our attention to the case where $\widehat P_\xi$ has a product atomic structure like the product empirical distribution $\boldsymbol{P}_\xi^N$ in \eqref{eq: prod empiricals} (c.f. Figure \ref{fig:virtual:samples}). The motivation for choosing it as a reference distribution comes from the fact that the corresponding MTH enjoys favorable probabilistic guarantees of containing the true underlying distribution~\cite{LMC-TO-DB:23}.   
Nevertheless, these come with the cost of a higher complexity for the reformulations derived in the previous sections, which may increase exponentially with the number of independent lower-dimensional components $n$. For example, given a random vector with $n=5$ lower dimensional components and $100$ collected samples, we would get a reference distribution comprising $M=N^n=10^{10}$ atoms. Since each atom corresponds to multiple variables and constraints in the associated reformulation, directly using $\boldsymbol{P}_\xi^N$ for large $n$ renders the results derived so far computationally intractable. 

To address this issue, we cluster the product empirical distribution $\bm P_\xi^N$ into an atomic distribution which can yield DRO problems of an acceptable complexity. We also investigate how much it is required to enlarge the vector of transport budgets to retain the statistical guarantees of the new hyperrectangle containing the true distribution. Throughout the section, we will consider random variables $\xi$ supported on a subset $\Xi$ of $\mathbb R^d\equiv \Rat{d_1}\times\ldots\times\Rat{d_n}$ and we fix an arbitrary norm $\|\cdot\|$ on $\mathbb R^d$. 
\begin{figure}[t!]
    \centering
    \includegraphics[width=.9\linewidth]{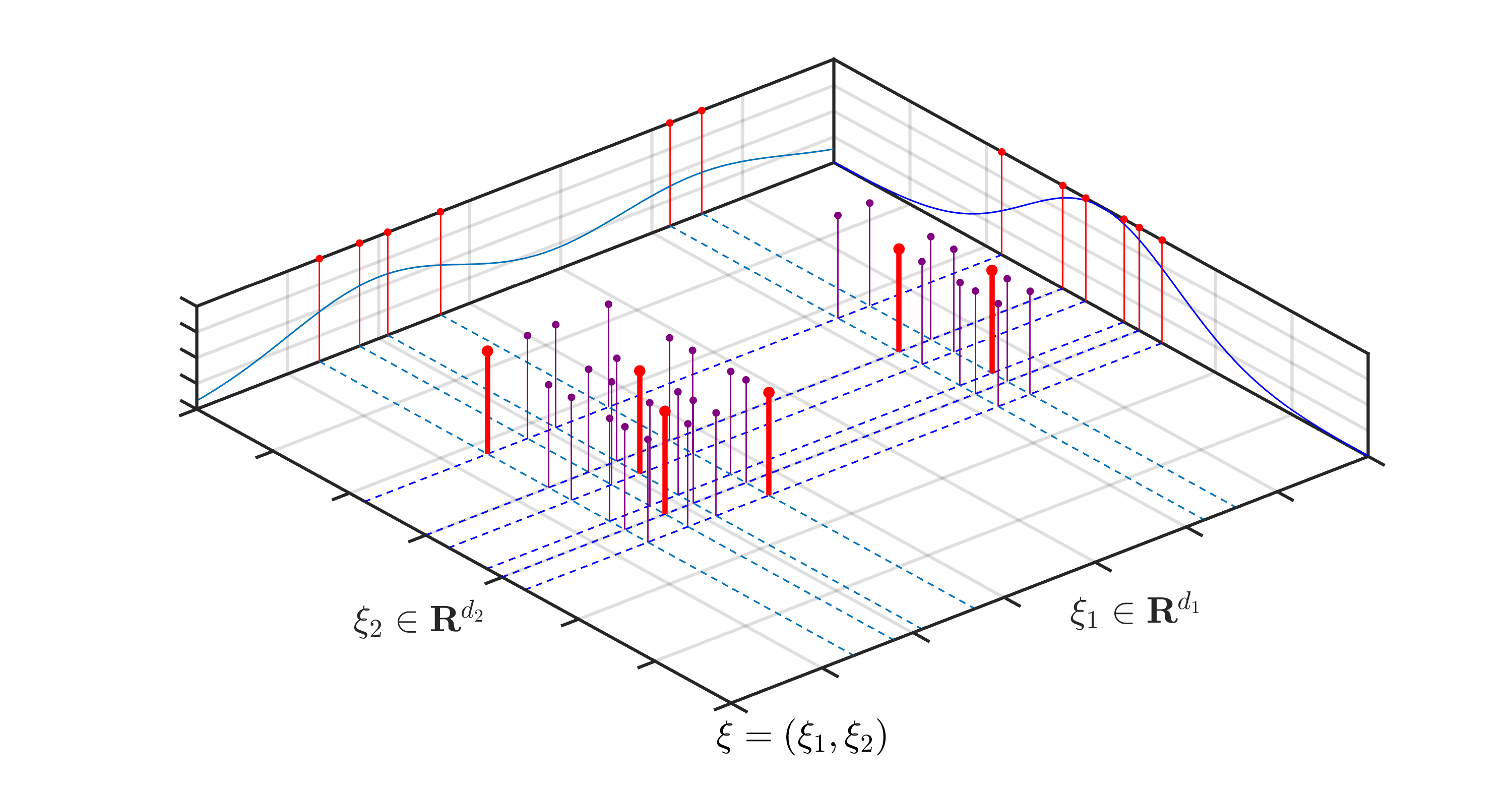}
    \caption{The figure shows the empirical distribution $P_\xi^N$ of  $N=6$ samples, represented by the thick red impulses, and the product empirical distribution $\bm P_\xi^N$, represented by the thin purple impulses. The product empirical distribution is the product of the marginal empirical distributions $P_{\xi_1}^N$ and $P_{\xi_2}^N$ of the independent components of $\xi$, which are depicted by the thin red impulses.}
    \label{fig:virtual:samples}
\end{figure}

Next, we present strategies to construct manageable reference distributions by clustering the atoms of $\bm P_\xi^N$. The clustered distributions enjoy the benefits of having a lower complexity than $\bm P_\xi^N$ while being typically closer to the true distribution compared to empirical models that only rely on i.i.d. samples of $\xi$. Clustering methods to reduce the complexity of data-driven DRO problems have also been considered in \cite{DL-SM:21,FF-PJG:21}. Our key distinctive feature here is to adapt the MTH discrepancy structure to the clustered distributions so that the probabilistic guarantees associated with $\mathcal T_p(\bm P_\xi^N,\bm\eps)$ are maintained for the shifted hyperrectangle.

To achieve this objective, we consider the following approaches:
\begin{itemize}
\item \textbf{Direct clustering:} The first approach is to directly cluster the points on which $\bm P_\xi^N$ is supported into a discrete distribution $\widehat P_\xi$, which has a smaller number of atoms.

\item \textbf{Component-wise clustering:} The second option is to obtain clustered versions $\widehat P_{\xi_k}$ of the marginal empirical distributions $P_{\xi_k}^N$, $k\in[n]$, and deduce a lower-complexity reference model from their product $\widehat P_{\xi_1}\otimes\cdots\otimes\widehat P_{\xi_n}$. 

\item \textbf{Multi-component clustering:} This option provides a middle ground, where one can form $\ell$ groups of consecutive components of $\xi$, 
%
decompose $\bm P_\xi^N$ into products with marginals $\bm P_{(\xi_1,\ldots,\xi_{m_1})}^{N},\ldots, \bm P_{(\xi_{m_{\ell}},\ldots,\xi_{n})}^{N}$, cluster them, and then take the product of these clusters. 
\end{itemize}

The derivation of clusters that are optimal in terms of their Wasserstein discrepancy from the reference distribution $\bm P_\xi^N$ can be performed using Lloyd's algorithm 
\cite[Chapter 9]{CMB-NSN:06},
which is guaranteed to converge to local minima. This is convenient since the complexity of each gradient step of the algorithm is linear in the number of points to be clustered. The clustering process can be further adapted by adjusting the complexity of lower-dimensional clusters and the choice to cluster in a monolithic or a component-wise fashion.

By modifying the reference distribution of the hyperrectangles, the probabilistic guarantees characterizing $\mathcal T_p(\bm P_\xi^N,\bm\eps)$ (cf. \cite[Proposition 5.2]{LMC-TO-DB:23}) do not necessarily hold anymore. Yet, we show how they can be recovered by appropriately adjusting the transport budgets $\eps_k$ in each direction. For simplicity, we only consider the two extreme cases where we either directly cluster all the samples into a monolithic distribution or cluster all marginal empirical distributions and form their associated product.
 \begin{prop} \longthmtitle{Containment guarantees via hyperrectangle inflation} \label{prop:inflation:containment}
 Assume that $P_\xi\in \mathcal T_p(\bm P_\xi^N,\bm\eps)$ and consider the reference distribution $\widehat P_\xi$. 
 
 \noindent (i) If there is a transport plan $\widehat{\pi}\in\mathcal C(\widehat P_\xi,\bm P_\xi^N)$ such that $\int_{\Xi \times \Xi} \|\eta_k-\zeta_k\|^p d\widehat\pi(\eta,\zeta)\le\epsilon_k^p$ holds with some $\epsilon_k\ge0$, for all $k\in[n]$, then 
 \begin{align} \label{inflation:containment}
P_\xi\in \mathcal T_p(\widehat P_\xi,\bm\eps+\bm\epsilon)
 \end{align}
 where $\bm\epsilon:=(\epsilon_1,\ldots,\epsilon_n)$. 

 \noindent (ii) If additionally $\widehat P_\xi$ has the product structure $\widehat P_{\xi_1}\otimes\cdots\otimes\widehat P_{\xi_n}$, then \eqref{inflation:containment} also holds with $\bm\epsilon:=(\epsilon_1,\ldots,\epsilon_n)$ and $\epsilon_k:=W_p(P_{\xi_k}^N,\widehat P_{\xi_k})$, for all $k\in[n]$.
\end{prop}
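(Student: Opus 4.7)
The plan is to establish part (i) by gluing the coupling that certifies $P_\xi\in\mathcal T_p(\bm P_\xi^N,\bm\eps)$ with the given coupling $\widehat\pi$ along their common marginal $\bm P_\xi^N$. By hypothesis there exists $\pi_1\in\mathcal C(\bm P_\xi^N,P_\xi)$ with $\int\|\zeta_k-\xi_k\|^p d\pi_1(\zeta,\xi)\le\eps_k^p$ for every $k\in[n]$. Since the second marginal of $\widehat\pi$ and the first marginal of $\pi_1$ both equal $\bm P_\xi^N$ and $\Xi$ is Polish, the standard gluing lemma in optimal transport produces a probability measure $\pi$ on $\Xi\times\Xi\times\Xi$ whose projection onto the first two factors coincides with $\widehat\pi$ and whose projection onto the last two factors coincides with $\pi_1$.

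Next I would show that the projection $\pi'$ of $\pi$ onto the first and third factors witnesses the inclusion $P_\xi\in\mathcal T_p(\widehat P_\xi,\bm\eps+\bm\epsilon)$. By construction $\pi'\in\mathcal C(\widehat P_\xi,P_\xi)$. To bound the transport cost in direction $k$, I would combine the pointwise triangle inequality $\|\eta_k-\xi_k\|\le\|\eta_k-\zeta_k\|+\|\zeta_k-\xi_k\|$ with Minkowski's inequality in $L^p(\pi)$ to get
\begin{equation*}
\Bigl(\int\|\eta_k-\xi_k\|^p d\pi\Bigr)^{1/p}\le\Bigl(\int\|\eta_k-\zeta_k\|^p d\widehat\pi\Bigr)^{1/p}+\Bigl(\int\|\zeta_k-\xi_k\|^p d\pi_1\Bigr)^{1/p}\le\epsilon_k+\eps_k,
\end{equation*}
where I replaced the three-factor integrals by their two-factor counterparts using the marginals of $\pi$ identified above. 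Since $\|\eta_k-\xi_k\|$ depends only on the first and third coordinates, the left-hand integral equals $\int\|\eta_k-\xi_k\|^p d\pi'$. Raising to the $p$-th power yields $\int\|\eta_k-\xi_k\|^p d\pi'\le(\eps_k+\epsilon_k)^p$ for every $k$, which is precisely the defining inequality for $\mathcal T_p(\widehat P_\xi,\bm\eps+\bm\epsilon)$.

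For part (ii) the remaining task is to exhibit a coupling $\widehat\pi\in\mathcal C(\widehat P_\xi,\bm P_\xi^N)$ whose $k$-th component cost is at most $W_p^p(P_{\xi_k}^N,\widehat P_{\xi_k})$, so that part (i) applies with $\epsilon_k:=W_p(P_{\xi_k}^N,\widehat P_{\xi_k})$. Exploiting the product structures $\widehat P_\xi=\widehat P_{\xi_1}\otimes\cdots\otimes\widehat P_{\xi_n}$ and $\bm P_\xi^N=P_{\xi_1}^N\otimes\cdots\otimes P_{\xi_n}^N$, I would select for each $k$ an optimal coupling $\widehat\pi_k\in\mathcal C(\widehat P_{\xi_k},P_{\xi_k}^N)$ attaining $W_p^p(\widehat P_{\xi_k},P_{\xi_k}^N)$, and form the product $\widehat\pi:=\widehat\pi_1\otimes\cdots\otimes\widehat\pi_n$. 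Its marginals are the required product distributions, so $\widehat\pi\in\mathcal C(\widehat P_\xi,\bm P_\xi^N)$, and the integral of $\|\eta_k-\zeta_k\|^p$ against $\widehat\pi$ factors through the $k$-th component and thus equals exactly $W_p^p(\widehat P_{\xi_k},P_{\xi_k}^N)=\epsilon_k^p$. Invoking part (i) then closes the argument.

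The main technical point is the rigorous invocation of the gluing lemma and the bookkeeping ensuring that each three-factor integral in the Minkowski chain reduces to the intended two-factor marginal; both are standard consequences of disintegration of measures on Polish spaces, which is in force by Assumption~\ref{assumption:equivalent:metric}. Beyond this, the remainder is a routine application of Minkowski's inequality and the fact that transport costs against product couplings decouple across independent components.
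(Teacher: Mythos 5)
Your proposal is correct and follows essentially the same route as the paper: gluing the coupling certifying $P_\xi\in\mathcal T_p(\bm P_\xi^N,\bm\eps)$ with $\widehat\pi$ along the common marginal $\bm P_\xi^N$, projecting onto the outer factors, and applying the triangle and Minkowski inequalities componentwise, then constructing the product of componentwise optimal couplings for part (ii). The only cosmetic difference is that the paper makes the coordinate reordering in the product coupling explicit via a pushforward map $T$, which you handle implicitly when identifying the marginals.
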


\begin{proof}
Since $P_\xi\in \mathcal T_p(\bm P_\xi^N,\bm\eps)$, there is by \eqref{transport:hyperrectangle:Tp} a transport plan $\pi$ between $\bm P_\xi^N$ and $P_\xi$ so that   
 \begin{align} 
 \label{component:cost}
\int_{\Xi\times\Xi} \|\zeta_k-\xi_k\|^p {\rm d}\pi(\zeta,\xi)\le\varepsilon_k^p
\end{align}
for all $k\in[n]$. To prove part (i), let  $\widehat\pi$ be a trassport plan as in the statement. From the Gluing Lemma~\cite{CV:08}, there exists a distribution $\widetilde\pi$ on $\Xi^3$ such that $\textup{pr}_{12\#}(\widetilde\pi)=\widehat\pi$ and $\textup{pr}_{23\#}(\widetilde\pi)=\pi$. Then if we consider the transport plan $\pi':=\textup{pr}_{13\#}\widetilde\pi$, it follows that 
\begin{align*}
 \Big(\int_{\Xi \times \Xi} \|\eta_k- \xi_k\|^p d\pi'(\eta,\xi)\Big)^{\frac{1}{p}}   &= \Big(\int_{\Xi^3} \|\eta_k- \xi_k\|^p d\widetilde\pi(\eta,\zeta,\xi)\Big)^{\frac{1}{p}} \\ & \le \Big(\int_{\Xi^3} (\|\eta_k- \zeta_k\| + \| \zeta_k-\xi_k \|)^p d\widetilde\pi(\eta,\zeta,\xi)\Big)^{\frac{1}{p}}   
\\
&\le \Big(\int_{\Xi^3} \|\eta_k- \zeta_k\|^p d\widetilde\pi(\eta,\zeta,\xi)\Big)^{\frac{1}{p}}
+\Big(\int_{\Xi^3} \|\zeta_k-\xi_k\|^p d\widetilde\pi(\eta,\zeta,\xi)\Big)^{\frac{1}{p}} \\ & = \Big(\int_{\Xi\times\Xi} \|\eta_k-\zeta_k\|^p d\widehat\pi(\eta,\zeta)\Big)^{\frac{1}{p}} + \Big(\int_{\Xi \times \Xi} \|\zeta_k-\xi_k\|^p d\pi(\zeta,\xi)\Big)^{\frac{1}{p}} \\ &\le \eps_k+\epsilon_k, 
\end{align*}
for all $k\in[n]$. Here, we used the triangle inequality in the first inequality, Minkowski's inequality in the second one, and Fubini's theorem in the last equality. By the definition of the MTH~\eqref{transport:hyperrectangle:Tp}, this establishes part (i). 
     
To show part (ii), since $W_p(P_{\xi_k}^N,\widehat P_{\xi_k})= \epsilon_k$, there exists by \cite[Theorem 4.1]{CV:08}  an optimal transport plan $\widehat\pi_k$ that minimizes the transport cost between $P_{\xi_k}^N$ and $\widehat P_{\xi_k}$ and therefore  
\begin{align*}
\int_{\Xi \times \Xi} \|\xi_k- \zeta_k\|^p d\widehat\pi_k(\xi_k,\zeta_k) = \epsilon_k^p 
\end{align*}
for each $k\in[n]$. Then if we define 
\begin{align}
\widehat\pi:=T_{\#}\bigotimes_{k=1}^n \widehat\pi_k, 
\end{align}
where $T:\prod_{k=1}^n \Xi_k\times\Xi_k \to \prod_{k=1}^n \Xi_k \times\prod_{k=1}^n \Xi_k $ is the linear map $T(\zeta_1,\xi_1,\ldots,\zeta_n,\xi_n):=(\zeta_1,\ldots,\zeta_n,\xi_1,\ldots,\xi_n)$, one can readily check as in \cite[proof of Proposition 4.4]{LMC-TO-DB:23} that it is a transport plan between $\bm P_\xi^N$ and $\widehat P_\xi$.  Using again the Gluing Lemma between $\widehat\pi$ and $\pi$ in \eqref{component:cost}, it follows exactly as in the proof of part (i) that   
\begin{align*}
\Big(\int_{\Xi \times \Xi} \|\eta_k-\xi_k\|^p d\pi'(\eta,\xi) \Big)^{\frac{1}{p}}  &\le \Big(\int_{\Xi\times\Xi} \|\xi_k- \eta_k\|^p d\pi(\xi,\eta) \Big)^{\frac{1}{p}}+ \Big(\int_{\Xi\times\Xi} \|\eta_k-\zeta_k\|^p d\widehat\pi(\eta,\zeta) \Big)^{\frac{1}{p}} \\ &\le \eps_k+\epsilon_k, 
\end{align*}
where $\pi'$ is again a transport plan between $\widehat P_\xi$ and $P_\xi$. Thus, we deduce from~\eqref{transport:hyperrectangle:Tp} that \eqref{inflation:containment}  also holds with $\bm \epsilon$ as in the statement of part (ii), which concludes the proof.
\end{proof}
This result clarifies how much we need to inflate the hyperrectangle to retain the probabilistic guarantees of containing the data-generating distribution. By clustering the product empirical distribution into $\widehat P_\xi$ using Loyd's algorithm, we also obtain a transport plan that associates each atom of $\bm P_\xi^N$ to its cluster and enables the direct computation of $\epsilon_k$ in case (i). Analogously, in case (ii), the Wasserstein distances $\epsilon_k$ between $P_{\xi_k}^N$ and $\widehat P_{\xi_k}$ 
can be easily computed by solving a linear program~\cite[section 3.1]{GP-MC:19}, or directly deduced through when $\widehat P_{\xi_k}$ is not specified beforehand and is determined using Lloyd's algorithm. Since the motivation to introduce MTHs for data-driven problems is their faster shrinkage compared to Wasserstein balls, in Appendix~\ref{sec:appendix:clustering} we also establish to which degree such desirable shrinkage rates are retained for clustered reference distributions.


\section{Simulation example} \label{sec:simulation:example}
In this section, we provide a real-world example that shows the superiority of MTHs centered at clustered product distribution over monolithic ambiguity balls, validating the theoretical results. In particular, we compare the performance of traditional Wasserstein balls and MTHs, when the latter are centered either at the product empirical distribution or its clustered version.

\subsection{Problem formulation}
We consider a power dispatch problem, where a daily power demand $d+\xi_2$ needs to be covered. The demand consists of a nominal term $d$, and a stochastic fluctuation $\xi_2$ representing the gap between the nominal and actual demand. To meet the demand, we exploit the power $\xi_1$ from a renewable energy resource, which is random, as it depends on the daily weather conditions. Since both $\xi_1$ and $\xi_2$ are random, there is no guarantee that the daily power demand $d+\xi_2$ can be met the renewable resource $\xi_1$. For this reason, we want to ensure that the demand is covered with probability at least $1-\alpha$ by ordering the minimum extra amount $x$ of power from the market (cf. Figure~\ref{fig:sim:example:scheme}).
\begin{figure}
    \centering
    \includegraphics[width=0.85\linewidth]{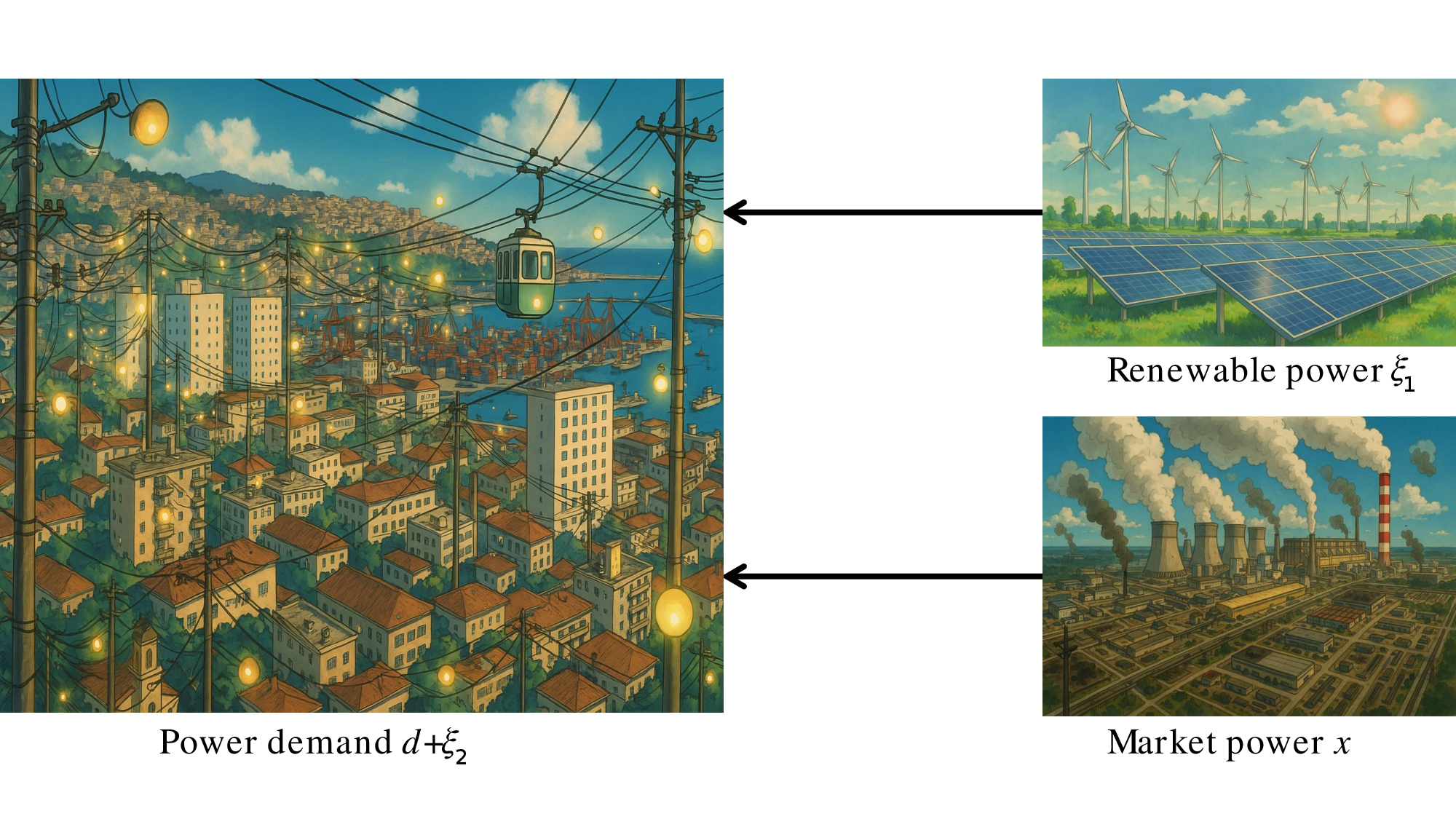}
    \caption{Illustration of the considered power dispatch problem. The goal is to cover the power demand $d+\xi_2$ with probability at least $1-\alpha$, using the renewable power $\xi_1$ and the smaller possible amount $x$ of power from the market.}
    \label{fig:sim:example:scheme}
\end{figure}
This yields the chance-constrained problem 
\begin{equation} \label{eq:sim:1}
    \begin{aligned} 
        \min_{x\ge0} \; & x \\
        {\rm s.t.} \; & \mathrm{CVaR}_{1-\alpha}^\mathbb{P}(d+\xi_2-\xi_1-x)\le 0,
    \end{aligned}
\end{equation}
where the $\mathrm{CVaR}$ constraint ensures the satisfaction of the chance constraint $\mathbb P(d+\xi_2-\xi_1-x\le0)\ge 1-\alpha$ as in Section~\ref{sec:DRCCP}.  We assume that the distributions of $\xi_2$ and $\xi_1$ are unknown and we only have access to $N$ i.i.d. samples $\xi^1,\ldots,\xi^N$ of $\xi:= (\xi_1,\xi_2)$. Thus, instead of \eqref{eq:sim:1} we solve the distributionally robust problem%

\begin{equation} \label{eq:sim:2}
    \begin{aligned} 
        \min_{x\ge0} \; &x \\
        \mathrm{\rm s.t.} \; & \sup_{P\in\mathcal P^N}\mathrm{CVaR}_{1-\alpha}^{P}(d+\xi_2-\xi_1-x)\le0,
    \end{aligned}
\end{equation}
for three distinct data-driven ambiguity sets $\mathcal P^N$. In particular, we consider the cases where $\mathcal P^N$ is the Wasserstein ball $\mathcal B\equiv\mathcal B_1(P_\xi^N, \varepsilon)$ with $P_\xi^N:=\frac{1}{N}\sum_{i=1}^N \delta_{\xi^i}$, the MTH $\mathcal T\equiv\mathcal T_1(\bm P_\xi^N, \boldsymbol{\varepsilon})$ with $\bm P_\xi^N = P_{\xi_1}^N \otimes P_{\xi_2}^N$, and the MTH $\mathcal T_{\rm cl}\equiv \mathcal T_1(\widehat P_\xi, \boldsymbol{\varepsilon})$, where $\widehat P_\xi$ is a product distribution whose marginals are obtained by clustering each marginal of $\bm P_\xi^N$.

Let $\Xi:=\{\xi\in\mathbb R^2:\;C\xi\le h\}$ be the support of $P_\xi$. We also assume $\Rat{2}$ equipped with the $1$-norm and denote $c:=(-1,1)$. Using Corollary \ref{cor:DRCCP:reformulation}, we reformulate \eqref{eq:sim:2} as the linear program
    \begin{align*}
        \min_{\underset{s_l,\eta_l,l\in[M]}{x\ge0,\bm\lambda, \tau}} \; & x \\
        {\rm s.t.}\quad & \langle \bm\lambda,\bm\eps\rangle+\sum_{l=1}^{M} \vartheta_l s_l \le \tau\alpha \\
        & d-x+\tau+\langle c, \xi^l\rangle + \langle \eta_l, h-C\xi^l\rangle \le s_l && l\in[M], \\
        & \big\| \textup{pr}_k^{(1,1)} (c- C^\top \eta_l)\big\|_\infty \le \lambda_k && l\in[M],\; k\in[2],\\
        & s_l\ge0,\;\eta_l\ge0 && l\in[M],
    \end{align*}
where $\bm\lambda:=(\lambda_1,\lambda_2)$. The solution of \eqref{eq:sim:2} is random due to the samples we use to construct the ambiguity set in each case. Depending on the ambiguity set we denote the solution by $\widehat x_{\mathcal B}$, $\widehat x_{\mathcal T}$, and $\widehat x_{\mathcal T_{\rm cl}}$, respectively. 

\subsection{Simulation results}
For the simulations, we choose the distribution $P_\xi:=P_{\xi_1} \otimes P_{\xi_2}$ with 
\begin{equation*}
   \begin{aligned}
       P_{\xi_1} &:= 0.4\mathcal U \big([11, \; 16 ] \big)+ 0.6\mathcal U \big([24, \; 27 ] \big) \\
       P_{\xi_2} &:= 0.6\mathcal U \big([3, \; 6 ] \big)+ 0.4\mathcal U \big([10, \; 11 ] \big),
   \end{aligned}
\end{equation*}
where $\mathcal U$ denotes the uniform distribution on the designated set. We also select the parameters $d=4.5$ and $\alpha=0.2$ and exploit $N=20$ samples to build the empirical and product empirical distributions. We cluster the latter using $M=9\times 8=72$ atoms, by clustering $P_{\xi_1}^N$ and $P_{\xi_2}^N$ into $9$ and $8$ atoms, respectively. Next, we tune the size of the three ambiguity sets so that 
\begin{align*}
\sup_{P\in\mathcal P^N}\mathrm{CVaR}_{1-\alpha}^{P}(d+\xi_2-\xi_1-x)\le0
\end{align*}
with confidence at least $0.9$ for each. 
To this end, we generate a large number of realizations of $N=20$ data samples and construct the respective centers $P_\xi^N$, $\widehat P_\xi$, and $\bm P_\xi^N$. We solve the optimization problem over different $\varepsilon$ values for the radius of the corresponding Wasserstein ball and the smallest ball enclosing the MTHs, and determine the constraint satisfaction frequencies of the associated decisions for the true distribution (cf. Figure \ref{fig:beta:wrt:epsilon}).
\begin{figure}[t]
    \centering
    \includegraphics[width=0.95\linewidth]{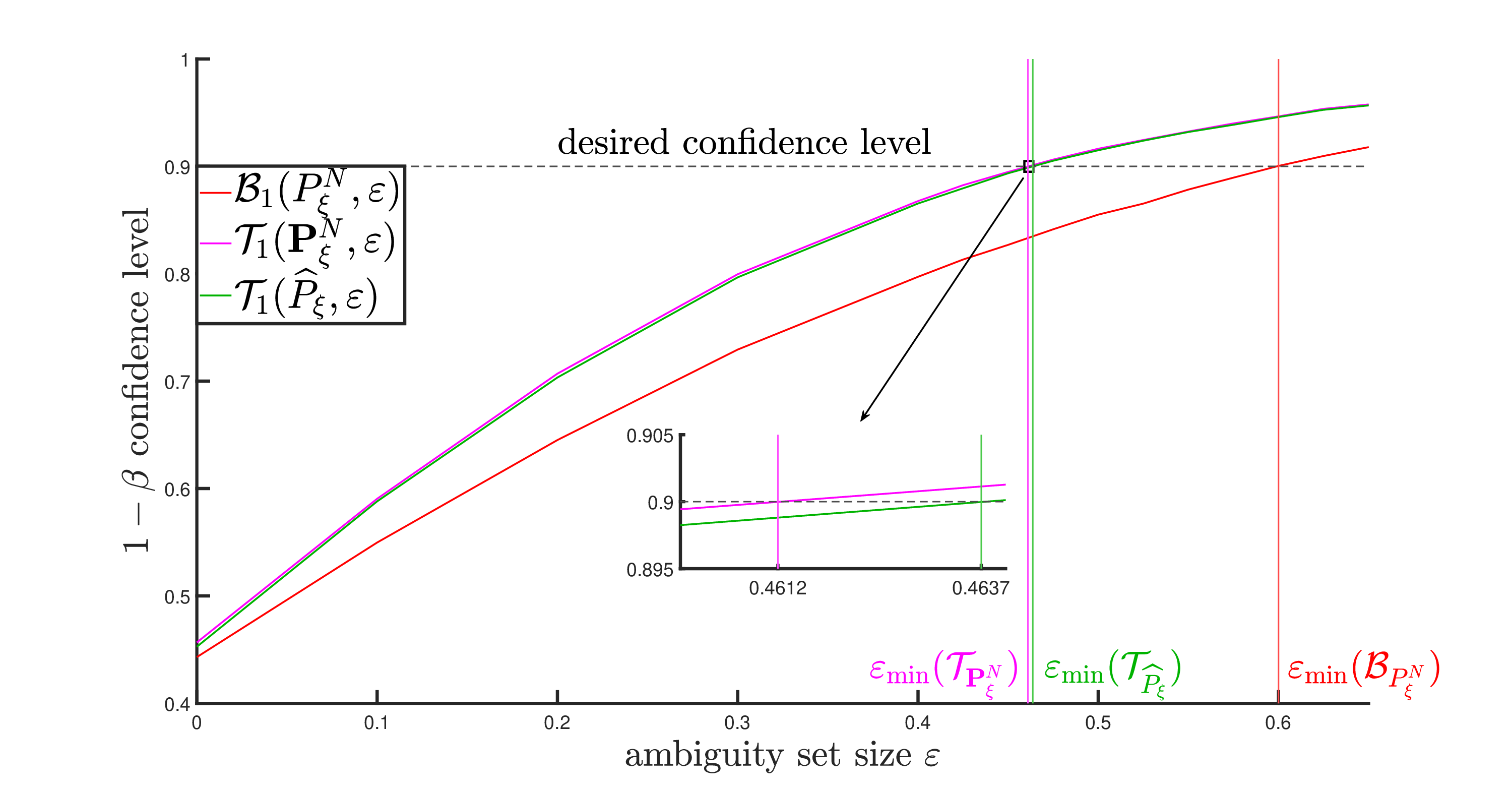}
    \caption{Satisfaction probability of the ${\rm CVaR}$ constraint for each ambiguity set with respect to its radius $\varepsilon$ for the parameters $d=4.5$, $\alpha = 0.2$, $N=20$ and $M=72$. For each hyperrectangle, $\varepsilon$ represents the radius of the smallest ball enclosing it. The minimum radii ensuring the desired 0.9 confidence level for each ambiguity set are indicated by the vertical lines. For every $\eps$, MTHs satisfy the chance-constraint with confidence levels that are always greater than that of the corresponding ambiguity ball, which turns out to be much more conservative. We also observe that despite the clear complexity reduction of clustered distribution compared to the product empirical distribution, the confidence levels of their associated MTHs are very close for every $\eps$.}
    \label{fig:beta:wrt:epsilon}
\end{figure}
 Next, we determine for each ambiguity set the smallest value of $\eps$ for which the ${\rm CVaR}$ constraint is met with the desired 0.9 confidence margin and solve all three instances of the problem for a large number of data realizations. Figure \ref{fig:cumulative:solution} shows the cumulative distribution of these solutions.
\begin{figure}
    \centering
    \includegraphics[width=0.95\linewidth]{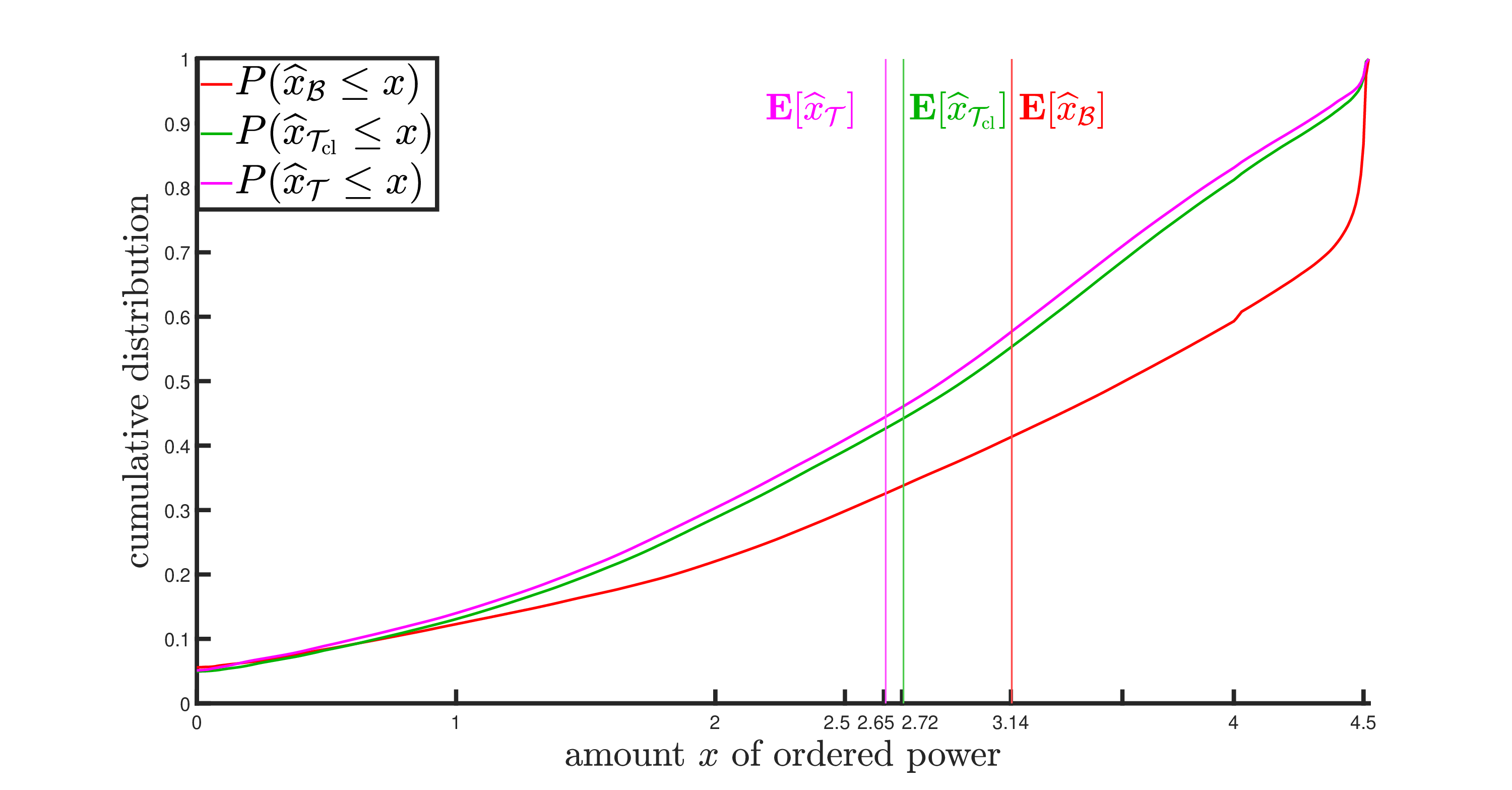}
    \caption{Cumulative distributions of $\widehat x_{\mathcal B}$ in red, $\widehat x_{\mathcal T_{\rm cl}}$ in green and $\widehat x_{\mathcal T}$ in magenta for the smallest size of each ambiguity set that ensures the satisfaction of the ${\rm CVaR }$ constraint with a confidence level of 0.9 for the parameters $d=4.5$, $\alpha=0.2$, $N=20$ and $M=72$. 
    The mean values of the distributionally robust solutions $\widehat x_{\mathcal B}$ in red, $\widehat x_{\mathcal T_{\rm cl}}$ in green, and $\widehat x_{\mathcal T}$ in magenta are depicted with vertical lines. 
    The obtained DRO values with the MTHs are significantly smaller than the ones with the traditional ambiguity balls. Further, the average solution when the MTH is centered at the clustered distribution is very close to the one where the MTH is centered at the product empirical distribution, despite the considerable difference in the number of atoms of the two centers.}
    \label{fig:cumulative:solution}
\end{figure}
%


From Figure \ref{fig:beta:wrt:epsilon}, it is clear that for each $\eps$, the decisions of the rectangles ensure constraint satisfaction for a significantly higher confidence level than the monolithic ball. In particular, to achieve the desired level of $0.9$ confidence, the monolithic ambiguity ball needs to have a radius $\eps_{\rm min}(\mathcal B)=0.6$ that is much larger than the radii $\eps_{\rm min}(\mathcal T)=0.4612$ and $\eps_{\rm min}(\mathcal T_{\rm cl})=0.4637$ of the balls enclosing the two rectangles. We also note that $\mathcal T_{\rm cl}$ has almost similar confidence levels as $\mathcal T$ despite the smaller complexity of its clustered center ($72$ instead of $400$ atoms). It only needs to be slightly larger than $\mathcal T$ to achieve the same guarantees.  

In Figure \ref{fig:cumulative:solution}, we plot the cumulative distributions of the solutions $\widehat x_{\mathcal B}$, $\widehat x_{\mathcal T_{\rm cl}}$ and $\widehat x_{\mathcal T}$, and their expectations using colored vertical lines. 
First, we notice that the distribution mass accumulates faster for the solutions resulting from DRO problems associated with MTHs compared to the solution resulting from the DRO problem associated with the monolithic ambiguity ball. This means that the DRO problems associated with MTHs have solutions that are significantly less conservative while still guaranteeing the satisfaction of the ${\rm CVaR}$ constraint with the desired confidence level.

 \section{Conclusion}  \label{sec:conclusion}

In this paper, we derived tractable reformulations for several classes of distributionally robust stochastic optimization problems where distributional uncertainty is captured through ambiguity sets formed by multiple optimal transport constraints. The sets, called multi-transport hyperrectangles (MTHs) share probabilistic guarantees of containing the unknown distribution, which scales conveniently with the number of samples when the uncertainty consists of independent low-dimensional components. For the reformulations, we also established basic properties of MTHs, including weak compactness and conditions ensuring finiteness of their associated DRO values. 

We derived reformulations for objective functions whose dependence on the uncertainty is quadratic or expressed as the maximum of finitely many concave functions, including piecewise affine ones. Using the results for the piecewise affine class, we solved uncertainty quantification problems over unions of polyhedral sets. 
We also derived tractable reformulations for distributionally robust chance-constrained problems over MTHs. To mitigate the complexity of the reformulations for data-driven problems, we clustered the reference distribution of the MTHs, obtaining tradeoffs between conservativeness and complexity reduction. 
The numerical simulations illustrate the benefits of MTHs compared to traditional Wasserstein balls when the uncertainty consists of independent components. Clustering the center of the MTH yielded results of comparable performance under a noticeable complexity reduction. 

Future work will exploit the tools developed in this paper to solve distributionally robust control problems with independent uncertainty components. These include stochastic model predictive control algorithms where uncertainty is independent across stages and the control of multi-agent systems where agents are subject to independent disturbances.

\appendix

\section{Technical proofs}
\label{sec:appendix:A}

\subsection{Proofs from Section \ref{sec:basic:results}}


In this part, we provide the proofs of some statements of Section \ref{sec:basic:results}.
To prove Theorem~\ref{thm:weak:compactness}, we also introduce some further preliminaries and results from probability theory. A collection $\mathcal S\subset \mathcal P(\Xi)$ of probability distributions is called tight if, for any $\varepsilon>0$, there exists a compact subset $B\subset \Xi$ such that $P(\Xi \backslash B)\le\varepsilon$, for all $P\in\mathcal S$. The following theorem of Prokhorov provides a link between weak compactness and tightness.
\begin{thm}
\longthmtitle{Prokhorov's theorem} A collection $\mathcal S \subset \mathcal P(\Xi)$ of probability measures is tight if and only if the weak closure of $\mathcal S$ is weakly compact in $\mathcal P(\Xi)$.
\label{thm:Prokhorov}
\end{thm}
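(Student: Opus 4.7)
The plan is to prove Prokhorov's theorem by arguing the two directions separately, leveraging that $\Xi$ is Polish so that $\mathcal P(\Xi)$ is metrizable and weak compactness coincides with sequential weak compactness.

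For the forward direction (tightness $\Rightarrow$ weak compactness of the closure), I would show that every sequence $(P_n)\subset\mathcal S$ admits a weakly convergent subsequence with limit in $\mathcal P(\Xi)$; by sequential equivalence, this limit lies in the weak closure $\overline{\mathcal S}$, proving weak compactness. Using tightness, for each $m\in\mathbb N_{>0}$ select a compact $K_m\subset\Xi$ with $P_n(\Xi\setminus K_m)\le 1/m$ for every $n$. Restrict each $P_n$ to $K_m$, which gives a bounded family of positive Radon measures on the compact metric space $K_m$; via the Riesz representation theorem, these correspond to continuous linear functionals on $C(K_m)$ of norm at most one. Since $C(K_m)$ is separable, Banach--Alaoglu yields a weak-$\ast$ convergent subsequence. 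A diagonal argument across $m$ produces a single subsequence $(P_{n_k})$ that converges on every $C(K_m)$. The compatibility of the limits on the nested enlargements $K_1\subset K_2\subset\cdots$ (after enlarging suitably) together with tightness permits defining a candidate limit measure $P$ on $\Xi$, and verifying that $P(\Xi)=1$ by exploiting $P(K_m)\ge 1-1/m$. Finally, weak convergence $P_{n_k}\to P$ against any bounded continuous $h$ follows by splitting $\int h\,{\rm d}P_{n_k}$ on $K_m$ and $\Xi\setminus K_m$ and letting $m\to\infty$, using that $\|h\|_\infty$ times the uniform small mass on the complement controls the error.

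For the reverse direction (weak compactness of $\overline{\mathcal S}$ $\Rightarrow$ tightness of $\mathcal S$), I would argue by contradiction. Assume there exists $\varepsilon>0$ such that for every compact $K\subset\Xi$, some $P\in\mathcal S$ satisfies $P(\Xi\setminus K)>\varepsilon$. Use separability of the Polish space $\Xi$ to write, for each $m$, a countable open cover $\Xi=\bigcup_{i\ge1}B(x_i,1/m)$, and form the closed sets $F_{m,N}:=\overline{\bigcup_{i\le N}B(x_i,1/m)}$. If tightness failed, one could find $P_N\in\mathcal S$ and indices $m_N,N_N$ so that $P_N(\Xi\setminus F_{m_N,N_N})>\varepsilon/2$. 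Any weak limit $P$ of a subsequence would, by the Portmanteau theorem applied to the closed sets, satisfy $P(\Xi\setminus F_{m,N})\ge\varepsilon/2$ for arbitrarily large $N$ and fine $m$; combining this with completeness (so that total boundedness of a closed set is equivalent to compactness) yields $P(\Xi)<1$, contradicting $P\in\mathcal P(\Xi)$ and hence the assumed weak compactness of $\overline{\mathcal S}$.

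The main obstacle is the forward direction: extending the limit functional obtained on the compact subsets $K_m$ to a genuine probability measure on all of $\Xi$, and confirming that its total mass is $1$. This is precisely where tightness is used in an essential way, both to keep the mass from escaping to infinity and to make the diagonal construction consistent across levels $m$. The reverse direction is more routine, amounting to a Portmanteau-style argument combined with a separability-plus-completeness characterization of relative compactness in $\Xi$.
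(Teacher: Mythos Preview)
The paper does not prove Prokhorov's theorem; it merely states it as a classical tool (Theorem~\ref{thm:Prokhorov}) used in the proofs of weak compactness of MTHs. So there is no ``paper's own proof'' to compare against, and any correct argument you supply goes beyond what the paper does.

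Your forward direction is essentially the standard Helly--Riesz diagonal argument and is workable, though the step of gluing the limit functionals on the increasing compacta $K_m$ into a single countably additive probability measure on $\Xi$ deserves more care than a one-line mention.

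Your reverse direction, however, has a genuine gap. You invoke Portmanteau ``applied to the closed sets'' to conclude $P(\Xi\setminus F_{m,N})\ge\varepsilon/2$, but the inequality goes the other way: for closed $F$, Portmanteau gives $\limsup_k P_{n_k}(F)\le P(F)$, i.e., $P(\Xi\setminus F)\le\liminf_k P_{n_k}(\Xi\setminus F)$, which is an \emph{upper} bound on the limit mass of the open complement, not the lower bound you need. Relatedly, letting both indices vary simultaneously (``$m_N,N_N$'') is not the right structure. The standard fix is to keep $m$ fixed, use the \emph{open} unions $U_N^{(m)}:=\bigcup_{i\le N}B(x_i,1/m)$, and exploit monotonicity: if for every $N$ there were $P_N\in\mathcal S$ with $P_N(U_N^{(m)})<1-\varepsilon$, then for any fixed $N_0$ and $N_k\ge N_0$ one has $P_{N_k}(U_{N_0}^{(m)})\le P_{N_k}(U_{N_k}^{(m)})<1-\varepsilon$, and Portmanteau on the open set $U_{N_0}^{(m)}$ now gives $P(U_{N_0}^{(m)})\le\liminf_k P_{N_k}(U_{N_0}^{(m)})\le 1-\varepsilon$; letting $N_0\to\infty$ contradicts $P(\Xi)=1$. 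Once this yields, for each $m$, an $N_m$ with $\sup_{P\in\mathcal S}P(\Xi\setminus U_{N_m}^{(m)})\le\varepsilon 2^{-m}$, the set $K:=\bigcap_m\overline{U_{N_m}^{(m)}}$ is closed and totally bounded, hence compact by completeness, and witnesses tightness.
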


We will exploit the following result from~\cite{CV:08} that establishes the lower semicontinuity of the transport-cost functional. We adopt it here for positive transport costs.
\begin{lemma}
    \longthmtitle{Lower semicontinuity of the cost functional \cite[Lemma~4.3]{CV:08}} 
    Let $\Xi_1$ and $\Xi_2$ be two Polish spaces and $c:\Xi_1\times\Xi_2\xrightarrow[]{}[0,+\infty]$ be a lower semicontinuous cost function. 
    Then $F:\pi\xrightarrow[]{}\int_{\Xi_1\times\Xi_2} c d\pi$ is lower semicontinuous on $\mathcal P(\Xi_1\times\Xi_2)$, equipped with the topology of weak convergence.
    \label{lemma:cost:functional:lower:semi:continuity}
\end{lemma}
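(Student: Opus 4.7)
The plan is to prove the lemma by approximating the nonnegative lower semicontinuous cost $c$ from below by an increasing sequence of bounded continuous functions, then passing to the limit. Since bounded continuous integrands test weak convergence directly (by its very definition), each approximating functional will be weakly continuous, so $F$ will be a pointwise supremum of weakly continuous functionals and therefore weakly lower semicontinuous.

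First, I would construct the approximating sequence via the capped Moreau--Yosida regularization. Let $d$ denote the product metric on $\Xi_1\times\Xi_2$ (which is Polish). For $n\in\mathbb N_{>0}$ define
\begin{equation*}
c_n(\xi):=\min\Bigl\{n,\;\inf_{\eta\in\Xi_1\times\Xi_2}\bigl(c(\eta)+n\, d(\xi,\eta)\bigr)\Bigr\}.
\end{equation*}
Each $c_n$ takes values in $[0,n]$ and is $n$-Lipschitz (hence continuous), because it is the pointwise minimum between a constant and an infimum of $n$-Lipschitz functions $\xi\mapsto c(\eta)+n\, d(\xi,\eta)$. Moreover, the sequence $(c_n)$ is pointwise nondecreasing.

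Second, I would verify that $c_n(\xi)\uparrow c(\xi)$ for every $\xi$. For any $\xi$ one has $c_n(\xi)\le c(\xi)$ by choosing $\eta=\xi$ in the inner infimum (and using $c\ge 0$), so $\sup_n c_n(\xi)\le c(\xi)$. For the reverse inequality, suppose first $c(\xi)<+\infty$ and fix $\varepsilon>0$; pick $\eta_n$ nearly optimal in the inner infimum. Since the first argument of the min grows without bound while $c_n(\xi)\le c(\xi)$, eventually the infimum is achieved by the second argument, forcing $n\, d(\xi,\eta_n)\le c(\xi)$ and hence $\eta_n\to\xi$. Lower semicontinuity of $c$ then gives $\liminf_n c(\eta_n)\ge c(\xi)$, which combined with $c_n(\xi)\ge c(\eta_n)+n\, d(\xi,\eta_n)-\varepsilon$ yields $\sup_n c_n(\xi)\ge c(\xi)-\varepsilon$. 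The case $c(\xi)=+\infty$ is similar: any sequence $\eta_n$ with $c(\eta_n)+n\, d(\xi,\eta_n)$ bounded in $n$ would again force $\eta_n\to\xi$ and hence a contradiction with lsc, so $c_n(\xi)\to +\infty$.

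Third, I would assemble the pieces. For each fixed $n$, since $c_n$ is bounded and continuous, the map $F_n:\mathcal P(\Xi_1\times\Xi_2)\to\mathbb R$ given by $F_n(\pi):=\int c_n\, d\pi$ is continuous in the weak topology (by definition of weak convergence). For any $\pi$, the monotone convergence theorem yields
\begin{equation*}
F(\pi)=\int c\, d\pi=\sup_n\int c_n\, d\pi=\sup_n F_n(\pi).
\end{equation*}
A pointwise supremum of a family of continuous real-valued functions is lower semicontinuous, so $F$ is lsc on $\mathcal P(\Xi_1\times\Xi_2)$, which is the desired conclusion.

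The main obstacle is verifying the two nontrivial properties of the regularization: continuity of $c_n$ and the monotone pointwise convergence $c_n\uparrow c$, both of which hinge on using lower semicontinuity of $c$ to handle the limiting behavior of near-minimizers of the inner infimum (and to accommodate the value $+\infty$). Once these are in hand, the passage from $F_n$ continuous to $F$ lsc is immediate.
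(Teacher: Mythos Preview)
Your argument is correct and is essentially the standard proof (and indeed the one given in Villani~\cite[Lemma~4.3]{CV:08}, from which the lemma is quoted): approximate $c$ from below by the capped Moreau--Yosida regularization, use weak continuity of integration against bounded continuous functions, and pass to the supremum via monotone convergence. The paper itself does not prove this lemma; it merely cites it as a known result, so there is nothing further to compare.
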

We also need the next result, which certifies the tightness of the set of transport plans between tight sets of probability distributions.
\begin{lemma}
    \longthmtitle{Tightness of couplings \cite[Lemma~4.4]{CV:08}} 
    Let $\Xi_1$ and $\Xi_2$ be two Polish spaces and let $\mathcal R_1$ and $\mathcal R_2$ be tight subsets of $\mathcal P(\Xi_1)$ and $\mathcal P(\Xi_2)$, respectively. Then the set $\mathcal C(\mathcal R_1, \mathcal R_2)$ of all couplings with marginals in $\mathcal R_1$ and $\mathcal R_2$, respectively, is itself tight in $\mathcal P(\Xi_1\times\Xi_2)$.
    \label{lemma:tightness:of:couplings}
\end{lemma}
The next result ensures that properness is maintained on the product of proper spaces under equivalence of any product metric with the considered metric on the product space. 

\begin{lemma} 
\label{lem:product:space:proper}
\longthmtitle{Product of proper metric spaces}  
    Consider the proper spaces $(\Xi_k, \rho_k)$, $k\in[n]$, and let Assumption \ref{assumption:equivalent:metric} hold. Then, the metric space $(\Xi,\rho)$ is also proper. 
\end{lemma}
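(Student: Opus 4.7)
The plan is to reduce to the special case where $\rho$ equals the product metric $\rho':=(\sum_{k=1}^n \rho_k^q)^{1/q}$ (with the usual max convention for $q=\infty$), and then exhibit closed bounded sets as closed subsets of products of closed balls in the factor spaces, each of which is compact by properness. A closed subset of a compact set is compact, which yields the desired conclusion.

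First I would use Assumption~\ref{assumption:equivalent:metric}(ii). Here ``equivalent'' should mean strongly (Lipschitz) equivalent, so there exist constants $c_1,c_2>0$ with $c_1\rho'\le\rho\le c_2\rho'$. This ensures that $\rho$ and $\rho'$ induce the same topology (hence the same closed sets) and the same bounded sets. Therefore $(\Xi,\rho)$ is proper if and only if $(\Xi,\rho')$ is proper, and it suffices to work with $\rho'$.

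Next I would take an arbitrary $\rho'$-closed and $\rho'$-bounded set $A\subset\Xi$. By boundedness, $A$ is contained in a ball $\overline{B_{\rho'}(\zeta,R)}$ for some $\zeta=(\zeta_1,\ldots,\zeta_n)\in\Xi$ and $R>0$. From the definition of the product metric, $\rho'(\zeta,\xi)\le R$ implies $\rho_k(\zeta_k,\xi_k)\le R$ for every $k\in[n]$ (both in the $q<\infty$ and $q=\infty$ cases). Consequently,
\begin{equation*}
A\subset \prod_{k=1}^n \overline{B_{\rho_k}(\zeta_k,R)}.
\end{equation*}
By properness of each $(\Xi_k,\rho_k)$, each closed ball $\overline{B_{\rho_k}(\zeta_k,R)}$ is compact, and a finite product of compact metric spaces is compact. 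Therefore $A$ is a closed subset of a compact set and is itself compact, which proves that $(\Xi,\rho')$, and hence $(\Xi,\rho)$, is proper.

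I do not expect any substantial obstacle here; the only points requiring care are (i) clarifying the notion of equivalence used in Assumption~\ref{assumption:equivalent:metric}(ii) so that bounded sets are preserved, and (ii) handling the $q=\infty$ boundary case uniformly with the $q<\infty$ case. Both are routine.
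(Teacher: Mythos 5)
Your proof is correct and follows essentially the same route as the paper: both arguments enclose a closed bounded set (the paper works directly with a closed ball $B_\rho(\zeta,r)$) inside the product $\prod_{k=1}^n \overline{B_{\rho_k}(\zeta_k,CR)}$ of closed balls, each compact by properness of $(\Xi_k,\rho_k)$, and conclude by compactness of finite products and closedness of the subset. Your extra care about the notion of equivalence and the $q=\infty$ case is sound but does not change the substance of the argument.
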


\begin{proof}
It suffices to show that the ball $B_\rho(\zeta,r):=\{\xi\in\Xi:\;\rho(\zeta,\xi)\le r\}$ is compact for each $\zeta\in\Xi$ and $r>0$. To this end, we exploit Assumption~\ref{assumption:equivalent:metric}(ii), which asserts that there exists $C>0$ such that $\rho_k(\zeta_k,\xi_k)\le C\rho(\zeta,\xi)$ for all $\zeta,\xi\in\Xi$ and $k\in[n]$. This in turn  implies that  
\begin{align*}
         B_\rho(\zeta,r)\subset B_{\rho_1}(\zeta_1,Cr)\times\ldots\times B_{\rho_n}(\zeta_n,Cr).
    \end{align*}
    Since the sets $B_{\rho_k}(\zeta_k,Cr)$, $k\in[n]$ are compact by properness of each $(\Xi_k,\rho_k)$, their cartesian product is also compact, which establishes the result. 
\end{proof}


\begin{prop} \longthmtitle{Weak compactness of the set of transport plans} \label{prop:weak:compactness:Pi}
Let Assumption~\ref{assumption:equivalent:metric} hold and consider the set of transport plans
\begin{align*}
  \Pi_p(Q,\bm\eps):=\big\{ \pi\in\mathcal P(\Xi\times\Xi):\;&\textup{pr}_{1\#}\pi=Q\;\textup{and}\\ &\int_{\Xi\times\Xi}\rho_k(\zeta_k,\xi_k)^p d\pi(\zeta,\xi)\le\eps_k\;\textup{for all}\; k\in[n]\Big\}.
\end{align*}
where $\bm\eps:=(\eps_1,\ldots,\eps_n)\succeq 0$ and $Q$ has finite $p$th moment. Then $\Pi_p(Q,\bm\eps)$ is weakly compact. 
\end{prop}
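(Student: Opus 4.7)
My plan is to verify that $\Pi_p(Q,\bm\eps)$ is both tight and weakly closed in $\mathcal P(\Xi\times\Xi)$, and then conclude weak compactness by Prokhorov's Theorem~\ref{thm:Prokhorov}.

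For tightness, note that the first marginal of every $\pi\in\Pi_p(Q,\bm\eps)$ is fixed at $Q$, and the singleton $\{Q\}\subset\mathcal P(\Xi)$ is automatically tight. Hence, by Lemma~\ref{lemma:tightness:of:couplings}, it suffices to show tightness of the family of second marginals $\{\mathrm{pr}_{2\#}\pi:\pi\in\Pi_p(Q,\bm\eps)\}$. To achieve this, I would first use Assumption~\ref{assumption:equivalent:metric}(ii) together with the equivalence of $p$-norms on $\mathbb R^n$ to secure a constant $C>0$ such that $\rho(\zeta,\xi)^p\le C\sum_{k=1}^n\rho_k(\zeta_k,\xi_k)^p$ for all $\zeta,\xi\in\Xi$. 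Then, fixing a reference point $\zeta_0\in\Xi$, applying Minkowski's inequality to $\rho(\zeta_0,\xi)\le\rho(\zeta_0,\zeta)+\rho(\zeta,\xi)$ in $L^p(\pi)$ and using both the finite $p$th moment of $Q$ and the transport-cost bounds in the definition of $\Pi_p(Q,\bm\eps)$, I would obtain a uniform moment bound
\[
\int_\Xi \rho(\zeta_0,\xi)^p\,d(\mathrm{pr}_{2\#}\pi)(\xi)\le K_0 \qquad\text{for all }\pi\in\Pi_p(Q,\bm\eps),
\]
with $K_0$ independent of $\pi$. Markov's inequality then converts this into $\mathrm{pr}_{2\#}\pi(\Xi\setminus \bar B_\rho(\zeta_0,R))\le K_0/R^p$, uniformly in $\pi$. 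Since the closed balls $\bar B_\rho(\zeta_0,R)$ are compact by properness of $(\Xi,\rho)$, which is ensured by Lemma~\ref{lem:product:space:proper}, this establishes the required tightness.

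For weak closedness, I would take a weakly convergent sequence $\pi_n\to\pi$ with $\pi_n\in\Pi_p(Q,\bm\eps)$. Continuity of the coordinate projection $\mathrm{pr}_1$ together with the definition of weak convergence yields $\mathrm{pr}_{1\#}\pi=Q$. Since each cost $\rho_k^p$ is non-negative and continuous (hence lower semicontinuous), Lemma~\ref{lemma:cost:functional:lower:semi:continuity} gives $\int \rho_k^p\,d\pi\le\liminf_n\int\rho_k^p\,d\pi_n$, so the transport-cost constraints survive the limit and $\pi\in\Pi_p(Q,\bm\eps)$. Combining tightness and weak closedness with Prokhorov's Theorem~\ref{thm:Prokhorov} concludes weak compactness.

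The only nontrivial step is the uniform $p$th-moment estimate for the second marginals: it requires carefully converting the per-component transport-cost constraints (expressed in the individual metrics $\rho_k$) into a single bound in the product metric $\rho$, which is precisely where Assumption~\ref{assumption:equivalent:metric}(ii) is essential. The remainder of the argument is a standard combination of Prokhorov's theorem with lower semicontinuity of optimal-transport costs.
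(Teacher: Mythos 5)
Your proof is correct, and its overall skeleton — tightness plus weak closedness plus Prokhorov's Theorem~\ref{thm:Prokhorov}, with Lemma~\ref{lemma:tightness:of:couplings} reducing tightness of the couplings to tightness of their marginals and Lemma~\ref{lemma:cost:functual:lower:semi:continuity} (i.e., Lemma~\ref{lemma:cost:functional:lower:semi:continuity}) handling the survival of the transport-cost constraints under weak limits — coincides with the paper's. The one place where you genuinely diverge is the tightness of the family of second marginals. The paper obtains it by citation: it encloses $\mathcal T_p(Q,\bm\eps)$ in a sufficiently large Wasserstein ball $\mathcal B_p(Q,\eps)$ (via \cite[Proposition 4.6]{LMC-TO-DB:23} and Assumption~\ref{assumption:equivalent:metric}(ii)) and then invokes the weak compactness of such balls from \cite[Theorem 1]{MY-DK-WW:21}, reading off tightness from the reverse direction of Prokhorov. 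You instead derive a uniform $p$th-moment bound for the second marginals directly from the per-component constraints — using the norm-equivalence $\rho^p\le C\sum_k\rho_k^p$, Minkowski's inequality, and the finite $p$th moment of $Q$ — and convert it into tightness via Markov's inequality and the compactness of closed balls guaranteed by Lemma~\ref{lem:product:space:proper}. Your version is self-contained and makes explicit exactly where Assumption~\ref{assumption:equivalent:metric}(ii) and the moment hypothesis on $Q$ enter, essentially re-proving the relevant special case of the cited ball-compactness result; the paper's version is shorter and reuses machinery already needed elsewhere. A minor additional merit of your write-up is that you verify explicitly that the marginal constraint $\textup{pr}_{1\#}\pi=Q$ passes to the weak limit, a point the paper's closedness argument leaves implicit when it identifies the sublevel sets of the cost functionals with the sets $\Pi_{p,k}(Q,\eps_k)$.
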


\begin{proof}
From \cite[Proposition 4.6]{LMC-TO-DB:23} and Assumption \ref{assumption:equivalent:metric}(ii) we can enclose the MTH $\mathcal T_p(Q,\bm\varepsilon)$ inside a Wassertein ball $\mathcal B_p(Q,\varepsilon)$ of a sufficiently large radius $\varepsilon$. Since $(\Xi,\rho)$ is proper by Lemma \ref{lem:product:space:proper} and the reference distribution $Q$ has a finite $p$th moment, we get by \cite[Theorem 1]{MY-DK-WW:21} that this enclosing Wasserstein ball is also weakly compact. 
From the reverse implication of Prokhorov's theorem (Theorem \ref{thm:Prokhorov}) this ball is tight, which implies that $\mathcal T_p(Q,\bm\varepsilon)$ is also tight as it is contained in $\mathcal B_p(Q,\varepsilon)$. Recalling that $\Pi_p(Q,\bm\eps)$ is the set of transport plans between $\{Q\}$ and $\mathcal T_p(Q,\bm\eps)$, by virtue of Lemma \ref{lemma:tightness:of:couplings}, $\Pi_p(Q,\bm\eps)$ is also tight. Thus, to show that $\Pi_p(Q,\bm\eps)$ is weakly compact, it suffices from the direct implication of Prokhorov's theorem to show that it is weakly closed. 

 Since each $\rho_k$, $k\in[n]$ is a metric, its is nonnegative. Thus, we get from Lemma~\ref{lemma:cost:functional:lower:semi:continuity} with $\Xi_1\equiv\Xi_2\equiv\Xi$ that the sublevel sets $F_k^{-1}((-\infty,\eps_k])$ of the functionals $F_k:\pi\xrightarrow[]{}\int_{\Xi\times\Xi} \rho_k^p d\pi$, $k\in[n]$ are weakly closed. Since each of those is equal to the set 
 \begin{align*}
 \Pi_{p,k}(Q,\eps_k):=\Big\{\pi\in \mathcal P(\Xi\times\Xi):\;\textup{pr}_{1\#}\pi=Q\;\text{and}\; \int_{\Xi\times\Xi}\rho_k(\zeta_k,\xi_k)^p d\pi(\zeta,\xi)\le \eps_k\Big\},
 \end{align*}
and taking into account that $\Pi_p(Q,\bm\eps)=\cap_{k=1}^n \Pi_{p,k}(Q,\eps_k)$, it follows that $\Pi_p(Q,\bm\eps)$ is weakly closed as well, which establishes the result.
\end{proof}

Now we have gathered all the necessary ingredients to prove weak compactness of $\mathcal T_p(Q,\bm\eps)$.

\begin{proof}[Proof of Theorem~\ref{thm:weak:compactness}]
    
     First, recall that from \cite[Proposition 4.6]{LMC-TO-DB:23} and Assumption \ref{assumption:equivalent:metric}(ii) we can enclose the MTH $\mathcal T_p(Q,\bm\varepsilon)$ inside a Wassertein ball $\mathcal B_p(Q,\varepsilon)$ of a sufficiently large radius $\varepsilon$. Since $(\Xi,\rho)$ is proper by Lemma \ref{lem:product:space:proper} and the reference distribution $Q$ has a finite $p$th moment, we get by \cite[Theorem 1]{MY-DK-WW:21} that this enclosing ball is weakly compact, which implies tightness of $\mathcal T_p(Q,\bm\eps)$. Hence, in order to show the weak compactness of $\mathcal T_p(Q,\bm\eps)$, it is enough to show that $\mathcal T_p(Q,\bm\eps)$ is closed. To this end let $\{P_j\}_{j\in\mathbb N}\subset\mathcal T_p(Q,\bm\eps)$ be a sequence converging to some $P\in\P(\Xi)$. By the definition of $\mathcal T_p(Q,\bm\eps)$, this sequence consists of the second marginals of a sequence  $\{\pi_j\}_{j\in\mathbb N}\subset\Pi_p(Q,\bm\eps)$, i.e., $P_j=\textup{pr}_{2\#}\pi_j$ for each $j$. Since $\Pi_p(Q,\bm\eps)$ is weakly compact by Proposition \ref{prop:weak:compactness:Pi}, we can extract a subsequence of $\{\pi_j\}_{j\in\mathbb N}$ that converges weakly to some $\pi\in \Pi_p(Q,\bm\eps)$ and denote $P':=\textup{pr}_{2\#}\pi$. With a slight abuse of notation, we also index this sequence by $j$. Therefore, for any continuous and bounded function $g$ on $\Xi$, we have 
    \begin{align*}  \lim_{j\to\infty}\int_{\Xi\times\Xi}g(\xi)dP_j(\xi) & =\int_{\Xi\times\Xi}g(\xi)\textup{pr}_{2\#}\pi_j(\zeta,\xi)=\lim_{j\to\infty}\int_{\Xi\times\Xi}g\circ \textup{pr}_2(\zeta,\xi) \pi_j(\zeta,\xi)\\ 
    & =\int_{\Xi\times\Xi}g\circ \textup{pr}_2(\zeta,\xi) \pi(\zeta,\xi)= \int_{\Xi\times\Xi}g(\xi)\textup{pr}_{2\#}\pi(\zeta,\xi)=\int_{\Xi\times\Xi} g(\xi)dP'(\xi). 
    \end{align*}
 By uniqueness of the weak limit, $P=P'\equiv{\rm pr_{2\#}}\pi$, and since $\pi\in\Pi_p(Q,\bm\eps)$, it follows by the definition of $\mathcal T_p(Q,\bm\eps)$ that $P$ belongs to $\mathcal T_p(Q,\bm\eps)$, which establishes that the set is closed and concludes the proof.
\end{proof}

\begin{proof}[Proof of Theorem \ref{thm:existence:inner:maximization}] 

The proof of part (i) is analogous to the proof of \cite[Theorem 3]{MY-DK-WW:21}. From Assumption~\ref{assumption:equivalent:metric}(ii) and \cite[Proposition 4.6]{LMC-TO-DB:23} we can enclose $\mathcal T_p(Q, \bm\eps)$ inside a sufficiently large Wasserstein  ball. Thus, it follows from \cite[Lemma 1]{MY-DK-WW:21} that the MTH $\mathcal T_p(Q, \bm\eps)$ has a uniformly bounded $p$th moment.
Together with our assumptions on $h$, this implies along the lines of the proof of \cite[Theorem 3]{MY-DK-WW:21} that the map $P\to \bE_P[h]$ is upper semicontinuous on $\mathcal T_p(Q, \bm\eps)$. Combining this with weak compactness of $\mathcal T_p(Q, \bm\eps)$ we establish that $h$ attains its supremum.

To prove part (ii), let $C>0$ and $\zeta\in\Xi$ such that \eqref{eq:h:growth:bound} holds for both $h$ and $-h$, consider the ball $B_\rho(\zeta,R):=\left\{ \xi\in\Xi:\; \rho(\zeta,\xi)\le R \right\}$ for some $R>0$, and let $\{P_k\}_{k\in\mathbb N}$ be a sequence in $\mathcal T_p(Q,\bm\eps)$  converging weakly to some $P_\star$. To prove the weak continuity of $\Psi$, we show that $\lim_{k\to\infty} \Psi(P_k)=\Psi(P_\star)$. Namely, we will show that for any $\epsilon>0$ there exists $k_0\in\mathbb N$ such that 
    \begin{align} \label{Psi:epsilon:increment}
        \left| \Psi(P_k)-\Psi(P_\star)\right|<\epsilon \quad\textup{for all}\; k\ge k_0.
    \end{align}
    To this end, we introduce a truncated version of $h$. We 
    fix $R>0$ and define  
    %
    %
    \begin{align*}
        h_R(\xi):=\min\{h^\star(\xi), C(1+R^r)\}.
    \end{align*}
    where $h^\star(\xi):=\max\{h(\xi),-C(1+R^r)\}$.
    We then have that\footnote{If $\rho(\zeta,\xi)< R$, then $\left|h(\xi) \right|\le C(1+R^r)$ and thus $h(\xi)-h_R(\xi)=0$. If $\rho(\zeta,\xi)\ge R$, then either $h(\xi)\ge C(1+R^r)$, which implies $0\le h(\xi)-h_R(\xi)\le C\rho(\zeta,\xi)^r$ by \eqref{eq:h:growth:bound}, or $h(\xi)\le-C(1+R^r)$, which again by~\eqref{eq:h:growth:bound} implies $-C\rho(\zeta,\xi)^r\le h(\xi)-h_R(\xi)\le0$, or $|h(\xi)|\le C(1+R^r)$, which implies $h(\xi)-h_R(\xi)=0$.} 
    \begin{align*}
        | h(\xi)-h_R(\xi)| \le  \begin{cases} 
        C \rho(\zeta,\xi)^r,  &\text{if } \rho(\zeta,\xi)\ge R \\
        0,  &\text{otherwise}.
        \end{cases} 
    \end{align*}
    This bound implies that there exists $b>0$  such that 
    \begin{align}
        \Big| \int_\Xi h(\xi) dP(\xi) - \int_\Xi h_R(\xi) dP(\xi) \Big| & \le \int_\Xi \left| h(\xi)-h_R(\xi)\right| dP(\xi)  \nonumber \\
        &\le C \int_{\Xi\setminus B_R(\zeta)} \rho(\zeta,\xi)^r dP(\xi) \le C b / R^{(p-r)} \label{eq:int:diff:bound}
    \end{align}
     for all $P\in\mathcal T_p(Q, \bm\eps)$. Indeed, in analogy to the proof of \cite[Theorem 3]{MY-DK-WW:21}, this follows from the fact that $\rho(\zeta,\xi)^r=\rho(\zeta,\xi)^p/\rho(\zeta,\xi)^{r-p}\le \rho(\zeta,\xi)^p/R^{r-p}$  over the domain of integration and \cite[Lemma 1]{MY-DK-WW:21}\footnote{This lemma can be applied to $\mathcal T_p(Q, \bm\eps)$ since it is possible to enclose it inside a sufficiently large Wasserstein ball that is centered at $Q$ \cite[Proposition 4.6]{LMC-TO-DB:23}.}.  
    Then, we obtain from the triangle inequality that  
    \begin{align*}
         | \Psi(P_k)-\Psi(P_\star)| & = \Big| \int_\Xi h(\xi) dP_k(\xi) - \int_\Xi h(\xi) dP_\star(\xi) \Big|  \\
         & \le \Big| \int_\Xi h_R(\xi) d(P_k - P_\star)(\xi) \Big| + \Big| \int_\Xi (h(\xi)- h_R(\xi))dP_k(\xi)\Big|  \\ & \hspace{12em} 
         + \Big| \int_\Xi (h(\xi)- h_R(\xi))dP_\star(\xi)\Big| 
    \end{align*}
    for all $k$. The last two terms can be rendered arbitrarily small by  \eqref{eq:int:diff:bound} for a large enough $R$. Since $h$ is continuous, which implies that $h_R$ is as well, and $P_k$ converges weakly to $P_\star$, we can also make the first term as small as desired for large enough $k$. This establishes \eqref{Psi:epsilon:increment} and concludes the proof.
\end{proof}

\subsection{Proofs from Section \ref{sec:DRO:reformulations}}
Here we gather all proofs from Section \ref{sec:DRO:reformulations}.
\begin{proof}[Proof of Proposition \ref{prop:finite:convex:program}] 
Since $h$ is real-valued by Assumption~\ref{assumption:hj:concave}, it is also bounded on the atoms of $Q$. Thus, it satisfies the conditions of Theorem \ref{prop:dual}, which yields  

\begin{align*}
    \sup_{P_\xi\in\mathcal T_1(Q,\bm\varepsilon)}
     \mathbb{E}_{P_\xi}[h(\xi)] & =\inf_{\bm\lambda\succeq0}\Big\{\langle\bm\lambda, \bm\varepsilon\rangle+\sum_{l\in[M]} \vartheta_l\sup_{\xi\in\Xi}\Big\{h(\xi)-\sum_{k=1}^n\|\xi_{k}^{l}-\xi_k\|\Big\}\Big\}\\
    &  =\inf_{\bm\lambda\succeq0}\Big\{\langle\bm\lambda, \bm\varepsilon\rangle+\sum_{l\in[M]} \vartheta_l\max_{j\in[m]}\sup_{\xi\in\Xi}\Big\{h_j(\xi)-\sum_{k=1}^n\lambda_k \|\xi_{k}^{l}-\xi_k\|\Big\}\Big\},
   \end{align*}
where we used the fact that $h(\xi)=\max_{j\in[m]}h_j(\xi)$ in the last equality. Introducing epigraphical variables $s_l$, $l\in[M]$, the problem is equivalent to 
\begin{align}
\begin{aligned}
     \inf_{\bm\lambda, s_l}\; &\langle\boldsymbol{\lambda, \varepsilon}\rangle+\sum_{l=1}^{M}\vartheta_l s_l && \\
      {\rm s.t.}\; & \sup_{\xi\in\Xi}\Big\{h_j(\xi)-\sum_{k=1}^n\lambda_k \|{\xi}_{k}^{l}-\xi_k\|\Big\}\le s_l && l\in[M], \;j\in[m] \\
      & \lambda_k\ge 0 &&  k\in[n].
   \end{aligned}
\label{convex:partial:reformulation:1}  
\end{align}
%
%
By adopting the arguments from \cite[cf. (12d) \& (12e)]{PME-DK:17} to the vectorial with respect to $\bm\lambda$ form of the constraints in \eqref{convex:partial:reformulation:1}, and considering for each $l\in[M]$ and $j\in[m]$ the dual variable $z_{lj}\in\mathbb R^d$, we get that
\begin{align*}
\sup_{\xi\in\Xi}\Big\{h_j(\xi) & -\sum_{k=1}^n \max_{\| \textup{pr}_k^{\bm d}(z_{lj})\|_*\le\lambda_k}\langle \textup{pr}_k^{\bm d}(z_{lj}),\xi_{k}-{\xi}_k^l\rangle\Big\} \\ = &\sup_{\xi\in\Xi}\;\min_{\| \textup{pr}_k^{\bm d} (z_{lj})\|_*\le\lambda_k,k\in[n]}\Big\{h_j(\xi)-\sum_{k=1}^n \langle \textup{pr}_k^{\bm d}(z_{lj}),\xi_{k}-{\xi}_k^l\rangle\Big\} \nonumber \\
 = & \min_{\| \textup{pr}_k^{\bm d}(z_{lj})\|_*\le\lambda_k,k\in[n]}\;\sup_{\xi\in\Xi}\Big\{h_j(\xi)-\sum_{k=1}^n \langle \textup{pr}_k^{\bm d}(z_{lj}),\xi_{k}-{\xi}_k^l\rangle\Big\}. 
\end{align*}
The last equality follows by compactness of $\{z_{lj}:\|\textup{pr}_k^{\bm d}(z_{lj})\|_*\le\lambda_k, k\in[n]\}$ and the minimax theorem \cite[Proposition 5.5.4]{DPB:09}.
Then each of the first set of constraints in \eqref{convex:partial:reformulation:1} is equivalent to
\begin{align}
\left\{\begin{aligned}
&\sup_{\xi\in\Xi}\{h_j(\xi)-\langle z_{lj}, \xi-{\xi}^l\rangle\}\le s_l  \\
&\| \textup{pr}_k^{\bm d}(z_{lj})\|_*\le\lambda_k, \quad k\in[n], 
\end{aligned}\right. \iff 
\left\{\begin{aligned} 
&\sup_{\xi\in\Xi}\{h_j(\xi)+\langle z_{lj}, \xi\rangle\}-\langle z_{lj},{\xi}^l\rangle\le s_l  \\
& \| \textup{pr}_k^{\bm d}(z_{lj})\|_*\le\lambda_k, \quad k\in[n]. 
\end{aligned}\right. \label{convex:partial:reformulation:3}  
\end{align}
where we substituted $z_{lj}$ by $-z_{lj}$ in the right-hand side of the equivalence. In addition, from Assumption \ref{assumption:hj:concave}, which establishes that $-h_j(\xi)$ is proper, convex, and lower semi-continuous, we obtain by using the exact same arguments as in \cite{PME-DK:17} that
\begin{align}
\sup_{\xi\in\Xi}\{h_j(\xi)+\langle z_{lj}, \xi\rangle\}={\rm cl}\big(\inf_{\upsilon_{lj}}\{\sigma_\Xi(\upsilon_{lj})+[-h_j]^*(z_{lj}-\upsilon_{lj})\}\big),  \label{convex:partial:reformulation:4}
\end{align}
where ${\rm cl}(f)(x):=\liminf_{z\to x}f(z)$ (cf. \cite[Page 14]{RTR-RJBW:10}). Therefore, by taking further into account \eqref{convex:partial:reformulation:1}--\eqref{convex:partial:reformulation:4}, we obtain the reformulation in the statement of the proposition.
\end{proof}

\begin{proof}[Proof of Proposition \ref{prop:Paffine:reformulation}] 
    The proof relies on Proposition \ref{prop:finite:convex:program} and the exact same reasoning as in the proof of \cite[Corollary 5.1]{PME-DK:17}. 
\end{proof}

\begin{proof} [Proof of Proposition \ref{prop:quad:reformulation}]
    By Theorem \ref{prop:dual} and Assumption \ref{assumption:P_hat}, \eqref{eq:inner:maximization} is equal to the value of the dual problem
    \begin{align*}
         \inf_{\bm\lambda\succeq0} \langle \bm\lambda,\bm\epsilon\rangle & + \sum_{l=1}^{M} \vartheta_l\sup_{\xi\in\Xi}\Big\{\xi^\top \mathcal Q\xi+2q^\top \xi-\sum_{k=1}^n \lambda_k \|\xi_k^l-\xi_k\|^2\Big\} \nonumber \\
         & = \inf_{\bm\lambda\succeq0} \langle \bm\lambda,\bm\epsilon\rangle + \sum_{l=1}^{M} \vartheta_l\sup_{\xi\in\Xi}\{\xi^\top \mathcal Q\xi+2q^\top \xi-(\xi^l-\xi)^\top {\rm diag}^{\bm d}(\bm\lambda)(\xi^l-\xi)\} \nonumber \\
         & = \inf_{\bm\lambda\succeq0} \langle \bm\lambda,\bm\epsilon\rangle + \sum_{l=1}^{M} \vartheta_l\sup_{\xi\in\Xi}\{\xi^\top (\mathcal Q- {\rm diag}^{\bm d}(\bm\lambda))\xi+2(q+ {\rm diag}^{\bm d}(\bm\lambda)\xi^l)^\top \xi-{\xi^l}^\top {\rm diag}^{\bm d}(\bm\lambda)\xi^l\}. 
    \end{align*}
    Since $\Xi\equiv\Rat{d}$, we have that 
    \begin{align*}
        \sup_{\xi\in\Rat{d}}\{\xi^\top (\mathcal Q- {\rm diag}^{\bm d}(\bm\lambda))\xi+2(q+ {\rm diag}^{\bm d}(\bm\lambda)\xi^l)^\top \xi-{\xi^l}^\top{\rm diag}^{\bm d}(\bm\lambda)\xi^l\}=+\infty 
    \end{align*}
    if either $\mathcal Q- {\rm diag}^{\bm d}(\bm\lambda)\npreceq 0$ or $q+\textup{diag}^{\bm d}(\bm\lambda)\xi^l \notin \textup{range}(\mathcal Q- {\rm diag}^{\bm d}(\bm\lambda))$. Taking this into account and  introducing epigraphical variables, we can rewrite the dual problem as 
    \begin{align*}
        \inf_{\bm\lambda\succeq0}  &\; \langle \bm\lambda,\bm\epsilon\rangle+\sum_{l=1}^{M} \vartheta_l s_l  \\ 
        \text{s.t.} & \;\sup_{\xi\in\Xi}\{\xi^\top (\mathcal Q- {\rm diag}^{\bm d}(\bm\lambda))\xi+2(q+ {\rm diag}^{\bm d}(\bm\lambda)\xi^l)^\top \xi-{\xi^l}^\top {\rm diag}^{\bm d}(\bm\lambda)\xi^l\}\le s_l && \quad l\in[M] \\%
        & \hspace{15.4em} q+ \textup{diag}^{\bm d}(\bm\lambda)\xi^l\in\textup{range}(\mathcal Q- {\rm diag}^{\bm d}) && \quad l\in[M] \\
        & \hspace{22.7em} \mathcal Q- {\rm diag}^{\bm d}(\bm\lambda)\preceq0.  
    \end{align*}
    Since $\mathcal Q- {\rm diag}^{\bm d}(\bm\lambda)\preceq0$ and $q+ \textup{diag}^{\bm d}(\bm\lambda)\xi^l\in\textup{range}(\mathcal Q- {\rm diag}^{\bm d})$ for each $l\in[M]$, the quadratic program of the corresponding epigraphical  constraint admits an explicit solution \cite[page 653]{BS-VL:04}, which yields the equivalent constraint
    \begin{align*}
        (q+ {\rm diag}^{\bm d}(\bm\lambda)\xi^l)^\top  ( {\rm diag}^{\bm d}(\bm\lambda)-\mathcal Q)^{\dagger} (q+ {\rm diag}^{\bm d}(\bm\lambda)\xi^l)-{\xi^l}^\top  {\rm diag}^{\bm d}(\bm\lambda){\xi^l}-s_l\le0.
    \end{align*}
    Combining this with the characterization of positive semidefiniteness through the Schur complement \cite[page 651]{BS-VL:04}, we obtain the desired result. 
    %
    %
\end{proof}

\subsection{Proofs from Section~\ref{sec:DRCCP}}

Here we provide the proofs from Section~\ref{sec:DRCCP}. To this end, we generalize a saddle point result from \cite[Theorem 2.1]{AS-AK:02}, which enables the interchange of min and max operations for convex expected-value problems over general ambiguity sets of probability distributions. We first introduce some necessary preliminaries. Let $\mathcal A$ be a nonempty and convex set of probability distributions on the measurable space $(\Xi,\mathcal F)$. We also consider a closed convex subset $S$ of $\Rat{n}$, a convex neighborhood $V$ of $S$,  a function $\varphi:\Rat{n}\times\Xi\to\Rat{}$, and make the following assumption.

\begin{assumption} \label{assumption:saddle:point}
\longthmtitle{Saddle function \& domain} 
With $\mathcal A$, $S$, $V$, $\varphi$ as above we assume that:

\noindent (A1) For all $x\in V$ and $P\in\mathcal A$ the function $\xi\mapsto\varphi(x,\xi)$ is $\mathcal F$-measurable and $P$-integrable, i.e.,  
\begin{align*}
g(x,P):=\bE_P[\varphi(x,\xi)]\in\Rat{}.
\end{align*}

\noindent (A2) For all $\xi\in\Xi$ the function $x\mapsto\varphi(x,\xi)$ is convex on $V$.

\noindent (A3) For all $x\in V$ the max function 
\begin{align*}
f(x):=\sup_{P\in\mathcal A} \bE_P[\varphi(x,\xi)]\equiv\sup_{P\in\mathcal A}g(x,P)
\end{align*}
is finite valued, i.e., $f(x)<+\infty$.

Consider also a topology $\mathcal T$ on $\mathcal A$ and 
assume that:

\noindent (B1) The set $\mathcal A$ is sequentially compact.

\noindent (B2) For every $x\in V$ the function $P\mapsto g(x,P)$ is continuous. 
\end{assumption}

The following theorem summarizes the saddle-point property obtained by Propositions~2.1 and~2.2 in \cite{AS-AK:02} for our case where $\mathcal A$ is convex. 

\begin{thm}
\longthmtitle{Existence of saddle-point} 
Let Assumptions~\ref{assumption:saddle:point}(A1)-(A3) hold and $\bar x$ be an optimal solution of the problem
\begin{align} \label{primal:minmax}
\inf_{x\in S}\sup_{P\in\mathcal A}\bE_\mu[\varphi(x,\xi)]
\equiv\inf_{x\in S}\sup_{P\in\mathcal A}g(x,P)\equiv\inf_{x\in S}f(x).
\end{align}
Suppose further that 
\begin{align} \label{subdifferential:equality}
\partial f(\bar x)=\cup_{P\in\mathcal A^\star(\bar x)}\partial g_P(\bar x),
\end{align}
where 
$
\mathcal A^\star(\bar x):={\rm argmax}_{P\in\mathcal A}\,g(\bar x,P).
$ 
Then there exists $\bar P\in\mathcal A^\star(\bar x)$ such that $(\bar x,\bar P)$ is a saddle point of \eqref{primal:minmax}, namely 
\begin{align} \label{saddle:minmax}
\inf_{x\in S}\sup_{P\in\mathcal A}g(\bar x,\bar P)=\sup_{P\in\mathcal A}\inf_{x\in S}g(\bar x,\bar P).
\end{align}
\end{thm}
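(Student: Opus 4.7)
The plan is to combine first-order optimality of $\bar x$ with the assumed subdifferential decomposition \eqref{subdifferential:equality} to extract a distinguished worst-case distribution $\bar P$, and then verify the saddle-point inequalities by a short min-max chain.

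First, I would observe that $f$, being the pointwise supremum of the convex functions $x\mapsto g(x,P)$ (convex by Assumption~(A2)) and finite-valued on the neighborhood $V$ of $S$ (by Assumption~(A3)), is itself convex on $V$. Standard convex analysis then gives continuity of $f$ on the interior of $V$, and hence subdifferentiability at interior points. Since $\bar x\in S\subset V$ is an optimal solution of the convex problem $\min_{x\in S}f(x)$, the first-order optimality condition yields a subgradient $\bar z\in\partial f(\bar x)$ with $-\bar z\in N_S(\bar x)$, i.e.,
\begin{align*}
\langle \bar z,\,x-\bar x\rangle \ge 0\quad\textup{for all}\;x\in S.
\end{align*}

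Next, I would invoke the hypothesis \eqref{subdifferential:equality}: the subgradient $\bar z\in\partial f(\bar x)$ must lie in $\partial g_{\bar P}(\bar x)$ for some $\bar P\in\mathcal A^\star(\bar x)$. Because $g(\cdot,\bar P)$ is convex on $V$ and $\bar z$ is a subgradient of $g(\cdot,\bar P)$ at $\bar x$, the subgradient inequality combined with the normal-cone condition above yields, for every $x\in S$,
\begin{align*}
g(x,\bar P)-g(\bar x,\bar P)\ge \langle \bar z,\,x-\bar x\rangle\ge 0,
\end{align*}
so $\bar x\in\arg\min_{x\in S}g(x,\bar P)$ and in particular $g(\bar x,\bar P)=\inf_{x\in S}g(x,\bar P)$.

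Finally, the saddle-point identity follows from a standard min-max chain. Since $\bar P\in\mathcal A^\star(\bar x)$,
\begin{align*}
g(\bar x,\bar P)=\sup_{P\in\mathcal A}g(\bar x,P)\ge \inf_{x\in S}\sup_{P\in\mathcal A}g(x,P),
\end{align*}
while the previous display gives
\begin{align*}
g(\bar x,\bar P)=\inf_{x\in S}g(x,\bar P)\le \sup_{P\in\mathcal A}\inf_{x\in S}g(x,P).
\end{align*}
Weak duality $\sup_{P}\inf_{x}g\le \inf_{x}\sup_{P}g$ forces both inequalities to be equalities, establishing the saddle-point property \eqref{saddle:minmax} at $(\bar x,\bar P)$. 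The only delicate point is justifying the first-order optimality $0\in\partial f(\bar x)+N_S(\bar x)$; this is handled by the convexity and finiteness of $f$ on the full neighborhood $V$ of $S$, which guarantee the regularity needed to apply the classical convex subdifferential sum rule at $\bar x$.
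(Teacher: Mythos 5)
Your proof is correct. The paper itself does not prove this theorem---it states it as a summary of Propositions~2.1 and~2.2 of the cited reference \cite{AS-AK:02}---and your argument is a faithful reconstruction of the standard proof underlying that citation: first-order optimality of $\bar x$ (valid because $f$ is convex and finite, hence continuous, on the neighborhood $V\supset S$, so the sum rule $0\in\partial f(\bar x)+N_S(\bar x)$ applies) combined with the assumed decomposition \eqref{subdifferential:equality} to extract $\bar P\in\mathcal A^\star(\bar x)$ with $\bar x\in\arg\min_{x\in S}g(x,\bar P)$, followed by the weak-duality sandwich. No gaps; the only cosmetic point is that the subgradient inequality for $g(\cdot,\bar P)$ need only be invoked on $V$, where $g$ is guaranteed convex and finite, which is all your argument uses.
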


Using this result enables us to generalize \cite[Theorem 2.1]{AS-AK:02} in terms of the growth of the objective function for minimax DRO problems over MTHs.

\begin{thm} \label{thm:stochastic:maxmin}
\longthmtitle{Stochastic min-max equality} 
Let Assumption~\ref{assumption:saddle:point} hold, i.e., all (A1)-(A3) and (B1), (B2) hold, and assume that $\bar x$ is an optimal solution of \eqref{primal:minmax}. Then \eqref{primal:minmax} also admits a saddle point $(\bar x,\bar P)$ and the minimax equality \eqref{saddle:minmax} holds.    
\end{thm}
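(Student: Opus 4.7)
The plan is to reduce the theorem to an application of the preceding saddle-point existence theorem; that theorem requires the subdifferential identity \eqref{subdifferential:equality} as a hypothesis, so the entire work lies in deriving \eqref{subdifferential:equality} from the compactness and continuity conditions (B1)-(B2). Once \eqref{subdifferential:equality} is in hand, the saddle $(\bar x,\bar P)$ and the min-max equality \eqref{saddle:minmax} follow immediately.

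I would begin with two short preparatory steps. First, nonemptiness of $\mathcal A^\star(\bar x)$: extract a maximizing sequence $\{P_m\}\subset\mathcal A$ for $g(\bar x,\cdot)$, use sequential compactness (B1) to pass to a convergent subsequence $P_m\to P^\star$, and conclude $P^\star\in\mathcal A^\star(\bar x)$ from continuity (B2). Second, the inclusion $\cup_{P\in\mathcal A^\star(\bar x)}\partial g_P(\bar x)\subseteq\partial f(\bar x)$ follows because, for any such $P$ and $z\in\partial g_P(\bar x)$,
\begin{align*}
f(y)\ge g(y,P)\ge g(\bar x,P)+\langle z,y-\bar x\rangle = f(\bar x)+\langle z,y-\bar x\rangle\quad\textup{for all}\;y\in V,
\end{align*}
which places $z$ in $\partial f(\bar x)$.

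The principal step, and the primary obstacle, is the reverse inclusion $\partial f(\bar x)\subseteq\cup_{P\in\mathcal A^\star(\bar x)}\partial g_P(\bar x)$. The plan here is to invoke the Ioffe-Tikhomirov/Levin-Valadier formula for the subdifferential of a supremum of convex functions parametrized by a (sequentially) compact set: under (A1)-(A3) and (B1)-(B2), and noting that $f$ is finite and convex on $V$ and hence continuous on its interior, this yields
\begin{align*}
\partial f(\bar x)=\overline{\mathrm{conv}}\Big(\bigcup_{P\in\mathcal A^\star(\bar x)}\partial g_P(\bar x)\Big).
\end{align*}
I would then remove both the closure and the convex-hull operation by exploiting the fact that $g(x,\cdot)$ is \emph{affine} in $P$. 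For convexity of the union: given $P_1,P_2\in\mathcal A^\star(\bar x)$ and $\alpha\in[0,1]$, the distribution $P_\alpha:=\alpha P_1+(1-\alpha)P_2$ lies in $\mathcal A^\star(\bar x)$ by convexity of $\mathcal A$ and affinity of $g(\bar x,\cdot)$, and the identity $g_{P_\alpha}=\alpha g_{P_1}+(1-\alpha)g_{P_2}$ implies $\alpha\partial g_{P_1}(\bar x)+(1-\alpha)\partial g_{P_2}(\bar x)\subseteq\partial g_{P_\alpha}(\bar x)$, which lands back inside the union. For closedness of the union: if $z_m\in\partial g_{P_m}(\bar x)$ with $P_m\in\mathcal A^\star(\bar x)$ and $z_m\to z$, pass via (B1) to a convergent subsequence $P_m\to P^\star\in\mathcal A^\star(\bar x)$, and take the limit in the subgradient inequality $g(y,P_m)\ge g(\bar x,P_m)+\langle z_m,y-\bar x\rangle$ using (B2) fiberwise in $y\in V$ to place $z$ in $\partial g_{P^\star}(\bar x)$.

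The most delicate point I expect to address is the Ioffe-Tikhomirov invocation, since the standard statement is set in a compact Hausdorff topological framework while (B1)-(B2) provide only sequential versions. I would resolve this by observing that in the intended applications $\mathcal A$ is an MTH, which by Theorem~\ref{thm:weak:compactness} is weakly compact inside the metrizable space $\P(\Xi)$, so sequential and topological compactness coincide and the sequential continuity in (B2) coincides with topological continuity; alternatively, in the purely abstract setting of Assumption~\ref{assumption:saddle:point}, one can rerun the Levin-Valadier argument with sequences throughout, since every step above uses only sequential notions. Assembling the inclusions, the identity \eqref{subdifferential:equality} is verified and the preceding saddle-point theorem delivers the desired $(\bar x,\bar P)$ together with \eqref{saddle:minmax}.
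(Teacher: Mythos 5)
Your proposal is correct and follows essentially the same route as the paper: both invoke the Ioffe--Tikhomirov formula for the subdifferential of a supremum and then reduce the verification of \eqref{subdifferential:equality} to showing that $\cup_{P\in\mathcal A^\star(\bar x)}\partial g_P(\bar x)$ is closed, using sequential compactness (B1) and continuity (B2) exactly as you do. The only difference is that the paper cites a version of the max-formula that already yields the plain closure (so the convex-hull removal via affinity of $g$ in $P$, which you carry out explicitly, is absorbed into the cited theorem), and it omits the preparatory nonemptiness and easy-inclusion steps.
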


\begin{proof}
Following \cite{AS-AK:02}, we first note that by \cite[Theorem 3, Page 201]{ADI-VMT:79}, it holds that 
\begin{align*}
\partial f(\bar x)={\rm cl}\big(\cup_{P\in\mathcal A^\star(\bar x)}\partial g_P(\bar x)\big).
\end{align*}
Therefore, to establish \eqref{subdifferential:equality} and conclude the proof, it suffices to show that $\cup_{P\in\mathcal A^\star(\bar x)}\partial g_P(\bar x)$ is closed. To this end, let $z_k\to z\in\Rat{n}$ with each $z_k\in\partial g_{P_k}(\bar x)$ for some $P_k\in\mathcal A^\star(\bar x)$. Since $\{P_k\}\subset\mathcal A$ and $\mathcal A$ is sequentially compact by (B1), a subsequence converges to some $P_\star\in\mathcal A$.
With a slight abuse of notation, we also index this sequence by $k$.
Taking further into account that $P\mapsto g_P(\bar x)$ is continuous by (B2), we have 
that $g_{P_k}(\bar x)\to g_{P_\star}(\bar x)$. By the fact that each $P_k\in\mathcal A^\star(\bar x)$ and the definition of $\mathcal A^\star$, we get that also $P_\star\in\mathcal A^\star(\bar x)$. Thus, it suffices to show that $z\in\partial g_{P_\star}(\bar x)$. Since $z_k\in\partial g_{P_k}(\bar x)$, we have for each $x\in V$ that 
\begin{align*}
g_{P_k}(x)-g_{P_k}(\bar x) \ge\langle z_k,x-\bar x\rangle  \Longrightarrow 
g_{P_\star}(x)-g_{P_\star}(\bar x) \ge\langle z,x-\bar x\rangle,  
\end{align*}
where we exploited again continuity of $g$ with respect to $P$. The proof is now complete. 
\end{proof}

We next formulate a result that enables us to apply this min-max interchange to DRO problems over MTHs. 

\begin{corollary} \label{corollary:minmax}
\longthmtitle{Stochastic min-max equality for MTHs} 
Consider the Polish space $\Xi$ with its Borel $\sigma$-algebra, let $\mathcal A\equiv\mathcal T_p(Q,\bm\varepsilon)$, $\mathcal T$ be the weak topology on $\mathcal A$, and assume that there exists $r\in[0,p)$, such that for each $x\in V$, the function $\xi\mapsto \varphi(x,\xi)$ is continuous and belongs to the class $\zeta\in \mathcal G_r$. Let also Assumptions~\ref{assumption:saddle:point}(A1)-(A3) hold and $\bar x$ be an optimal solution of \eqref{primal:minmax}. Then \eqref{primal:minmax} admits a saddle point $(\bar x,\bar P)$ and the minimax equality \eqref{saddle:minmax} holds.    
\end{corollary}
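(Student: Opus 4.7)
The plan is simply to apply Theorem \ref{thm:stochastic:maxmin} by verifying hypotheses (B1) and (B2) in the MTH setting; all the work has already been done in the preparatory results of Section \ref{sec:basic:results}, so the corollary should follow with minimal additional effort.

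For (B1), the sequential compactness of $\mathcal A = \mathcal T_p(Q,\bm\varepsilon)$ in the weak topology, I would directly invoke Theorem \ref{thm:weak:compactness}, which establishes that the MTH $\mathcal T_p(Q,\bm\varepsilon)$ is weakly compact (here the implicit standing premise is that $Q \in \mathcal P_p(\Xi)$, without which the MTH is not meaningful). Since $\Xi$ is Polish, $\mathcal P(\Xi)$ is metrizable, and, as recorded in the preliminaries, weak compactness coincides with weak sequential compactness on such a space. Thus $\mathcal A$ is sequentially compact in $\mathcal T$.

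For (B2), the weak continuity of $P \mapsto g(x,P) = \bE_P[\varphi(x,\xi)]$ at each fixed $x \in V$, I would appeal to Theorem \ref{thm:existence:inner:maximization}(ii), applied with $h(\xi) := \varphi(x,\xi)$. The standing hypothesis of the corollary is exactly that, for each $x \in V$, the map $\xi \mapsto \varphi(x,\xi)$ is continuous and lies in $\mathcal G_r$ for some $r \in [0,p)$, which matches the premise of that theorem verbatim. Therefore $P \mapsto g(x,P)$ is weakly continuous on $\mathcal T_p(Q,\bm\varepsilon)$, hence weakly sequentially continuous, yielding (B2).

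With (B1) and (B2) verified, and (A1)--(A3) assumed in the statement, Theorem \ref{thm:stochastic:maxmin} applies to the optimal solution $\bar x$ of \eqref{primal:minmax} and produces a $\bar P \in \mathcal A^\star(\bar x)$ such that $(\bar x,\bar P)$ is a saddle point and the minimax equality \eqref{saddle:minmax} holds. There is no real obstacle here: the entire content of the corollary is the translation of the abstract hypotheses of Theorem \ref{thm:stochastic:maxmin} into the MTH-specific language, and the only mildly delicate point is making sure the growth class $\mathcal G_r$ with $r<p$ matches the hypothesis of Theorem \ref{thm:existence:inner:maximization}(ii), which it does by construction.
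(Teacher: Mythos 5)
Your proposal is correct and follows exactly the route of the paper's own proof: invoke Theorem~\ref{thm:stochastic:maxmin} and verify (B1) via the weak compactness of $\mathcal T_p(Q,\bm\varepsilon)$ from Theorem~\ref{thm:weak:compactness} and (B2) via the weak continuity result of Theorem~\ref{thm:existence:inner:maximization}(ii). Your added remarks on the standing premise $Q\in\mathcal P_p(\Xi)$ and on metrizability reconciling compactness with sequential compactness are sound and, if anything, slightly more explicit than the paper.
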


\begin{proof}
From Theorem~\ref{thm:stochastic:maxmin} it suffices to establish (B1) and (B2), namely, that $\mathcal A$ is weakly (sequentially) compact and that each function $P\mapsto g(x,P)$ is continuous. These properties follow directly from Theorems~\ref{thm:weak:compactness} and~\ref{thm:existence:inner:maximization}(ii), respectively. 
\end{proof}

Using this min-max interchange property, we can prove the results of Section~\ref{sec:DRCCP} that yield tractable reformulations of distributionally robust chance-constrained problems.  

\begin{proof}[Proof of Lemma \ref{lemma:mini-max}]
The proof follows by using Corollary~\ref{corollary:minmax}, weak compactness of $\mathcal T_p(\bm P_\xi^N,\bm\epsilon)$, and the exact same arguments as in the proof of \cite[Lemma IV.2.]{HRA-CA-JL:18}.
\end{proof}

\begin{proof}[Proof of Proposition \ref{prop:reformulations:CVaR:convex:functions}]
  The proof is analogous to the proof of \cite[Proposition IV.3]{HRA-CA-JL:18}. From Lemma \ref{lemma:mini-max}(i), Assumption~\ref{assumption:convex:concave}(ii), and the strong duality result of Theorem \ref{prop:dual}, we get that
    \begin{align*}
        \sup_{P\in \mathcal T_p(Q,\bm\varepsilon)} & \inf_{\tau\in\mathbb R}\big\{\mathbb E_{P}[(f(x,\xi)+\tau)_+]-\tau\alpha\big\} \\
         = & \inf_{\tau\in\mathbb R} \inf_{\bm\lambda\succeq0}\Big\{ \langle\boldsymbol{\lambda, \varepsilon}\rangle-\tau\alpha+ \sum_{l\in[M]}\vartheta_l\sup_{\xi\in\Xi}\Big\{(f(x,\xi)+\tau)_+-\sum_{k=1}^n \lambda_k\|\xi_k^l-\xi_k\|^p\Big\}\Big\}.
    \end{align*}
    The inner infimum with respect to $\bm\lambda\succeq0$ is attained by Theorem \ref{prop:dual} and the infimum with respect to $\tau$ is also attained because of Lemma \ref{lemma:mini-max}(ii). Thus, introducing epigraphical variables $s_l\in\mathbb R$, $l\in[M]$, the feasible set \eqref{eq:DR:CVaR:constraints:1} comprises the points $x\in\mathcal X$ for which  
    \begin{align*}
         & \langle\boldsymbol{\lambda, \varepsilon}\rangle+ \sum_{l=1}^{M}\vartheta_l s_l\le \tau\alpha \nonumber \\
      \quad & \sup_{\xi\in\Xi}\Big\{(f(x,\xi)+\tau)_+-\sum_{k=1}^n\lambda_k \|\xi_k-{\xi}_k^l\|^p\Big\}\le s_l  \quad  l\in[M] 
    \end{align*}
    for some $\bm\lambda\succeq 0$ and   $s_l\in\Rat{}$, where $l\in[M]$. Using the same arguments as in the final part of the proof of \cite[Proposition IV.3]{HRA-CA-JL:18}, this constraint is equivalent to
    \begin{align*}
        \Big(\sup_{\xi\in\Xi}\Big\{(f(x,\xi)+\tau)_+-\sum_{k=1}^n\lambda_k \|\xi_k-{\xi}_k^l\|^p\Big\}\Big)_+\le s_l \quad l\in[M],
    \end{align*}
    which yields the desired result.
\end{proof}

\begin{proof}[Proof of Proposition~\ref{prop:CVaR:piecewise:affine:functions}]

     Since $f(x,\xi)$ is affine in $\xi$ and convex in $x$, and $\Xi$ is a compact polyhedral set, it satisfies Assumption \ref{assumption:convex:concave}. Therefore, the feasible set of \eqref{eq:DRCCP:CVaR} can be determined by Proposition \ref{prop:reformulations:CVaR:convex:functions}. Using the same arguments as in the proof of \cite[Proposition V.1]{HRA-CA-JL:18}, the constraints involving the auxiliary variables $s_l$ are equivalently written
    \begin{align}
      s_l \ge \Big(b_j(x)+\tau+ \sup_{\xi\in\Xi}\Big\{\langle x, A_j\xi\rangle-\sum_{k=1}^n\lambda_k \|\xi_k-{\xi}_k^l\|]\Big\}\Big)_{+} \quad l\in[M],\; j\in[m]. \label{eq: development 22}
    \end{align}
    By adjusting the derivations in \cite[equation (23)]{HRA-CA-JL:18} to these constraints and considering for each $l\in[M]$ and $j\in[m]$ the dual variable $z_{lj}\in\Rat{d}$, we get
    \begin{align*}
        \sup_{\xi\in\Xi}\Big\{\langle x, A_j\xi\rangle-\sum_{k=1}^n\lambda_k \|\xi_k-\xi_k^l\|\Big\} 
        &\overset{(a)}{=} \sup_{\xi\in\Xi}\Big\{\langle x, A_j\xi \rangle-\sum_{k=1}^n \sup_{\|\textup{pr}_k^{\bm d}(z_{lj})\|_{*}\le\lambda_k} \langle \textup{pr}_k^{\bm d}(z_{lj}), \xi_k-\xi_k^l\rangle\Big\}, \\
        &\overset{(b)}{=} \inf_{\|\textup{pr}_k^{\bm d}(z_{lj})\|_{*}\le\lambda_k,\; k\in[n]}\Big\{ \langle z_{lj}, {\xi}^l\rangle+ \sup_{\xi\in\Xi}\{\langle A_j^\top x-z_{lj},\xi\rangle\}\Big\}, \\
        &\overset{(c)}{=}  \inf_{\|\textup{pr}_k^{\bm d}(z_{lj})\|_{*}\le\lambda_k,\;k\in[n]}\Big\{ \langle z_{lj}, \xi^l\rangle + \inf_{\underset{\eta_{lj}\succeq0}{z_{lj}= A_j^\top x-C^\top \eta_{lj}}} \langle \eta_{lj}, h\rangle \Big\}, \\
        &= \inf_{ \underset{\eta_{lj}\succeq0,\;z_{lj}=A_j^\top x-C^\top\eta_{lj}}{\|\textup{pr}_k^{\bm d}(z_{lj})\|_{*}\le\lambda_k,\;k\in[n]}}\big\{ \langle z_{lj}, \xi^l\rangle+\langle \eta_{lj}, h\rangle\big\}, \\
        &= \inf_{\underset{k\in[n],\;\eta_{lj}\succeq0}{\|\textup{pr}_k^{\bm d}(A_j^\top x-C^\top{\eta_{lj}})\|_{*}\le\lambda_k}}\big\{\langle A_j^\top x - C^\top\eta_{lj}, \xi^l\rangle+ \langle {\eta_{lj}}, h\rangle\big\}, 
    \end{align*}
    Here (a) follows from the definition of the dual norm and (c) from linear programming duality for $\sup_{\xi\in\Xi}\langle A_j^\top x-z_{lj}, \xi\rangle$. Since the supremum in (b) is taken over the compact set $\Xi$, it is also attained. Thus, by linear programming duality, the inner infimum in (c) is also attained for some $\eta_{lj}\succeq0$. From these derivations, we can equivalently rewrite for each $j\in[m]$ and $l\in[M]$ the constraint \eqref{eq: development 22} as
    \begin{align*}
        s_l\ge \Big(b_j(x)+\tau+ \inf_{\underset{ \|\textup{pr}_k^{\bm d}(A_j^\top x- C^\top {\eta_{lj}})\|_{*}\le\lambda_k}{\eta_{lj}\ge0}}\big\{\langle A_j^\top x - C^\top\eta_{lj}, \xi^l\rangle+\langle{\eta_{lj}}, h\rangle \big\}\Big)_+.
    \end{align*}
    Recalling that the infimum in this expression is attained for some $\eta_{lj}\succeq0$, the constraint is satisfied if and only if there exist $s_l\ge0$ and $\eta_{lj}\succeq0$ such that  
    \begin{align*}
        b_j(x)+\tau+\langle A_j^\top x-C^\top \eta_{lj},  {\xi}^l\rangle+\langle{\eta_{lj}}, h\rangle & \le s_l && \quad l\in[M],\; j\in[m], \\
        \left\|\textup{pr}_k^{\bm d} (A_j^\top x-  C^\top {\eta_{lj}})\right\|_{*} & \le \lambda_k &&     \quad l\in[M],\; j\in[m],\; k\in[n].
    \end{align*}
    This yields the reformulation given in the statement and concludes the proof.
\end{proof}

\section{Shrinkage of MTHs with clustered reference  distribution} \label{sec:appendix:clustering}

Here, we quantify how the clustering process affects the size of data-driven ambiguity sets under the requirement that they contain the true distribution with prescribed probability. In particular, we seek to determine how the rate by which they shrink with the number of samples can be associated with the number of clusters when these are also allowed to depend on the sample size. This is motivated by the fact that MTHs shrink at favorable rates compared to Wasserstein balls and we want to establish to which degree such rates are retained under the clustering. To this end, we use concentration-of-measure results that yield confidence bounds on the Wasserstein distance between compactly supported distributions and their empirical approximations. 

\begin{prop} 
\label{prop:W:distance:bound}
\longthmtitle{Distance between true and empirical distribution \cite[Proposition~4.2]{DB-JC-SM:24-tac}} 
Assume $P_\xi$ is supported on $\Xi\subset\mathbb R^d$ with $\rho_\Xi := 1/2 {\rm diam}(\Xi)<\infty$ and that $d>2p$. Consider the empirical distribution $P_\xi^N$ inferred from $N$ i.i.d. samples of $P_\xi$. Then
\begin{align*} 
    \mathbb P\big({\small W_p(P^K,\bm P_\xi^N)\le \rho_\Xi c_1(d,p) (c_2(d,p)+(\ln \beta^{-1})^{1/2p}){N}^{-1/d}} \big)\ge 1-\beta,
\end{align*}
where $c_1$, $c_2$ are positive constants and $c_1$ also depends on the norm on $\Rat{d}$.
\end{prop}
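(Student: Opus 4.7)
The plan is to combine a classical upper bound on the expected Wasserstein distance between a compactly supported probability measure and its empirical counterpart with a McDiarmid-type concentration inequality around the mean. For the proposal I adopt the natural reading of the statement, namely controlling $W_p(P_\xi,P_\xi^N)$ between the true distribution and its empirical approximation.

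First, I would invoke a Fournier--Guillin-style expectation estimate. When $d>2p$ and $P_\xi$ is supported on $\Xi$ with $\rho_\Xi<\infty$, one has
$$\mathbb{E}\big[W_p(P_\xi,P_\xi^N)\big]\le c_2(d,p)\,\rho_\Xi\,N^{-1/d}.$$
The $\rho_\Xi$ factor is extracted by rescaling the support to unit diameter, applying the standard Fournier--Guillin bound, and rescaling back; the exponent $-1/d$ is exactly the rate that dominates the covering-number term in the regime $d>2p$. This delivers the deterministic $c_2(d,p)N^{-1/d}$ piece of the right-hand side.

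Next, I would obtain concentration around this mean by McDiarmid's bounded-differences inequality. Viewing $(\xi^1,\ldots,\xi^N)\mapsto W_p(P_\xi,P_\xi^N)$ as a function of the i.i.d. samples, replacing one sample by any other point of $\Xi$ perturbs exactly one atom of the empirical measure, and a direct transport-plan modification bounds the resulting change by $2\rho_\Xi N^{-1/p}$. McDiarmid then yields
$$\mathbb{P}\big(W_p(P_\xi,P_\xi^N)-\mathbb{E}[W_p(P_\xi,P_\xi^N)]>u\big)\le\exp\!\big(-c\,N u^{2p}/\rho_\Xi^{2p}\big).$$
Setting the right-hand side equal to $\beta$ and solving for $u$ produces a deviation of order $\rho_\Xi(\ln\beta^{-1}/N)^{1/(2p)}$. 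Because $d>2p$ and $N\ge 1$ imply $N^{-1/(2p)}\le N^{-1/d}$, this deviation is majorized by $c_1(d,p)\,\rho_\Xi(\ln\beta^{-1})^{1/(2p)}N^{-1/d}$. Adding it to the expectation bound and consolidating constants yields the stated inequality with probability at least $1-\beta$.

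The main obstacle will be verifying the McDiarmid increment with a constant depending only on $\rho_\Xi$ (avoiding hidden dimensional factors) and tracking constants uniformly in $\beta$ and $N$ through the combination step. An alternative that sidesteps the sub-optimal absorption $N^{-1/(2p)}\le N^{-1/d}$ is to use a Talagrand-type concentration inequality in the spirit of Weed--Bach, which provides deviations already at the sharper $N^{-1/d}$ scale; since the statement only needs the additive form $c_2+(\ln\beta^{-1})^{1/(2p)}$, either route is sufficient.
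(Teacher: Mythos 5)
The paper does not actually prove this proposition: it is imported verbatim from the cited reference \cite[Proposition~4.2]{DB-JC-SM:24-tac}, so there is no internal proof to compare against, and your attempt has to be judged on its own terms. You correctly read through the typo in the statement (the intended quantity is $W_p(P_\xi,P_\xi^N)$), and your two-step architecture --- a Fournier--Guillin-type expectation bound at rate $\rho_\Xi N^{-1/d}$ plus concentration around the mean, followed by absorbing $N^{-1/(2p)}\le N^{-1/d}$ using $d>2p$ --- is the standard way such bounds are assembled. The expectation step and the final absorption are fine.

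The gap is in the concentration step, and it is fatal for $p>1$. Your bounded-difference constant $c_i=2\rho_\Xi N^{-1/p}$ is correct, but McDiarmid then gives
\begin{align*}
\mathbb P\big(W_p-\mathbb E[W_p]>u\big)\;\le\;\exp\Big(-\tfrac{2u^2}{\sum_i c_i^2}\Big)\;=\;\exp\Big(-\tfrac{u^2\,N^{2/p-1}}{2\rho_\Xi^2}\Big),
\end{align*}
which is \emph{not} the tail $\exp(-c\,Nu^{2p}/\rho_\Xi^{2p})$ you wrote down; the two coincide only at $p=1$. For $p=2$ the exponent $2/p-1$ vanishes, so the bound does not improve with $N$ at all, and for $p>2$ it degrades with $N$; consequently the deviation $\rho_\Xi(\ln\beta^{-1}/N)^{1/(2p)}$ is unjustified for every $p>1$. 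The underlying reason is that for $p>1$ the worst-case single-sample increment $N^{-1/p}$ of $W_p$ is far larger than its typical fluctuation, so bounded differences is intrinsically too lossy. To reach the stated form one needs a genuinely different tool --- e.g., the Fournier--Guillin concentration inequality, which for $d>2p$ and compactly supported $P_\xi$ gives a tail of order $C\exp(-cN\varepsilon^{d})$ for $W_p$ at scales $\varepsilon\le1$ and hence, after solving for $\beta$ and using $d>2p$ to pass from $(\ln\beta^{-1})^{1/d}$ to $c_2+(\ln\beta^{-1})^{1/(2p)}$, the claimed bound directly, with no mean/deviation split --- or the Weed--Bach route you mention only in passing. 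As written, the proof is complete only for $p=1$.
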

%


Exploiting this result, we determine an upper bound for the Wasserstein distance between the clustered distribution $\widehat P_\xi$ and the product empirical distribution $\bm P_\xi^N$. 
\begin{prop} \longthmtitle{Wasserstein distance from optimal quantizer} \label{prop:W:Phat:bmPxiN}
    Assume $\widehat P_\xi$ is an optimal quantizer of $\bm P_\xi^N$ supported at $K$ atoms with 
    \begin{align}
        \log_N K = q \label{eq:assumption:clustered:distribution:K}
    \end{align}
    for some $q>1$ and let $d>2p$. Then, we have  
    \begin{align*} 
        W_p(\widehat P_\xi,\bm P_\xi^N) \le C(\rho_\Xi,d,p)N^{-q/d},
    \end{align*}
    with $C(\rho_\Xi,d,p):=\rho_\Xi c_1(d,p)c_2(d,p)$ and $\rho_\Xi$, $c_1$, $c_2$ as given in Proposition~\ref{prop:W:distance:bound}.
\end{prop}

\begin{proof}
First, fix a realization of the product empirical distribution $\bm P_\xi^N\in\mathcal P^{N^n}(\Xi)$ and let $P^K\in\mathcal P^K(\Xi)$ be supported on $K$ atoms of $\bm P_\xi^N$. Then from Proposition~\ref{prop:W:distance:bound}, the probabilistic argument suggests that we can select for any $\beta\in(0,1)$ a distribution $P^K$ such that 
\begin{align*}
W_p(P^K, \bm P_\xi^N) & \le \rho_\Xi c_1(d,p) (c_2(d,p)+(\ln \beta^{-1})^{1/2p})K^{-1/d} \\
& = \rho_\Xi c_1(d,p) (c_2(d,p)+(\ln \beta^{-1})^{1/2p})N^{-q/d},
\end{align*} 
where we used \eqref{eq:assumption:clustered:distribution:K} to obtain the second line. Thus, we can consider a sequence of probabilities $\beta_i\nearrow1$ and discrete distributions $P_i^K$ with 
\begin{align*}
W_p^p(P_i^K, \bm P_\xi^N)\le \eps_i:=\rho_\Xi c_1(d,p)(c_2(d,p)+(\ln \beta_i^{-1})^{1/2p}) N^{-q/d}.
\end{align*} 
Identifying each discrete distributions $P_i^K$ with a finite sequence of points in $\Xi^K \subset \mathbb R^{Kd}$, we deduce by compactness of $\Xi^K$ that a subsequence of $P_i^K$ converges to some $P_{\star}^K$ with
\begin{align*}
W_p(P_{\star}^K, \bm P_\xi^N)\le \lim_{i\to\infty}\eps_i= C(\rho_\Xi,d,p) N^{-q/d},
\end{align*}
where $C$ is given in the statement. Since $\widehat P_\xi$ is an optimal quantizer of $\bm P_\xi^N$ consisting of $K$ atoms, which implies that $W_p(\widehat P_\xi, \bm P_\xi^N) \le W_p(P_\star^K, \bm P_\xi^N)$, we obtain the desired result. 
\end{proof}
\begin{rem} \longthmtitle{Quantization quality of the clustered distribution} The optimal quantization problem is non-convex and its solution using Lloyd-type methods only guarantees convergence to some local optimum. However, this often yields satisfactory, i.e., near-optimal approximations of the global optimum~\cite{RO-YR-LJS-CS:13, DA-SV:06, BB-BM-AV-RK-SV:12}. As a result, a bound close to the one derived in Proposition \ref{prop:W:Phat:bmPxiN} is likely to hold in practice.
\end{rem}

The following variant of Proposition~\ref{prop:W:distance:bound} establishes that MTHs built from random variables with multiple independent components that contain the true distribution with prescribed confidence shrink at favorable rates with the number of samples compared to Wasserstein balls. 

\begin{prop} 
\label{prop:size:yellow:ball}
\longthmtitle{Ambiguity hyperrectangle size dependence on data \cite[Proposition 5.2]{LMC-DB-TO:22}.} 
Let $P_\xi:=P_{\xi_1}\otimes\ldots\otimes P_{\xi_n}$, where $\Xi_k$ is a compact subset of $\mathbb R^{d_k}$ and $d_k>2p$ for each $k\in[n]$. 
Then one can select the radii $\bm\eps=(\eps_1,\ldots,\eps_n)$ so that 
 \begin{align*}
  \mathbb P( P_\xi\in\mathcal T_p(\bm P_\xi^N, \bm\eps)) \ge 1-\beta
\quad \text{and}\quad
  \mathcal T_p(\bm P_\xi^N,\bm\eps)\subset \mathcal B_p(\bm P_\xi^N,\eps),
    \end{align*}
    where
    \begin{align} \label{eq:eps:yellow:bound}
     \eps=\widehat C(\rho_\Xi, \beta, n, d, p) N^{-1/d_{\rm max}},    
    \end{align}
    $d_{\rm max}:= \max_{k\in[n]}d_k$, and $\widehat C$ also depends on the norm in $\Rat{d}$.
\end{prop}

Combining Propositions~\ref{prop:inflation:containment},~\ref{prop:W:Phat:bmPxiN}, and~\ref{prop:size:yellow:ball}, we can determine how a shifted MTH that is centered at a quantizer of the product empirical distribution shrinks with the number of samples. 

\begin{corollary}
 \longthmtitle{Size of inflated MTHs}
 With $P_\xi$ as in Proposition~\ref{prop:size:yellow:ball}, let $\widehat P_\xi$ be an optimal quantizer of $\bm P_\xi^N$ supported at $K$ atoms and assume that  \eqref{eq:assumption:clustered:distribution:K} holds for some $q>1$. Consider also an optimal transport plan $\pi$  for the $W_p$ distance between $\widehat P_\xi$ and $\bm P_\xi^N$. Then with $\bm\eps:=(\eps_1,\ldots,\eps_n)$ as in Proposition~\ref{prop:size:yellow:ball} and $\bm\epsilon:=(\epsilon_1,\ldots,\epsilon_n)$ where $\epsilon_k^p:=\int_{\Xi \times \Xi} \|\eta_k-\zeta_k\|^p d\pi(\eta,\zeta)$ for each $k\in[n]$, we have 
    \begin{align}
    \label{thm:small:shifted:rect:conf}
        \mathbb P\big(P_\xi\in \mathcal T_p(\widehat P_\xi, \bm\eps+\bm\epsilon)\big)\ge1-\beta 
    \quad\text{and}\quad 
         \mathcal T_p(\widehat P_\xi,\bm\eps+\bm\epsilon)\subset \mathcal{B}_p(\widehat P_\xi, \eps^\star),
    \end{align}
   where
    \begin{align*}
        \eps^\star:= C(\rho_\Xi,d,p) N^{-q/d}+ \widehat C(\rho_\Xi, \beta, n, d, p) N^{-1/d_{\rm max}} 
    \end{align*}
  and $C$ and $\widehat C$, $d_{\rm max}$ are given in Proposition~\ref{prop:W:Phat:bmPxiN} and~\ref{prop:size:yellow:ball}, respectively.
\end{corollary}
\begin{proof}
      From Proposition \ref{prop:size:yellow:ball} we select $\bm\eps$ so that $\mathbb P\big( \mathcal T_p(\bm P_\xi^N, \bm\eps) \big)\ge 1-\beta$. Since also $\mathcal T_p(\bm P_\xi^N,\bm\eps)\subset \mathcal T_p(\widehat P_\xi, \bm\eps+\bm\epsilon)$ by Proposition \ref{prop:inflation:containment}(i), we deduce the inequality in \eqref{thm:small:shifted:rect:conf}. The inclusion in \eqref{thm:small:shifted:rect:conf} follows directly by Propositions~\ref{prop:W:Phat:bmPxiN} and~\ref{prop:size:yellow:ball}, the definition of $\eps^\star$, and the triangle inequality
    \begin{align*}
        W_p(\widehat P_\xi,P_\xi) \le W_p(\widehat P_\xi, \bm P_\xi^N) + W_p(\bm P_\xi^N, P_\xi) 
    \end{align*}
    for the Wasserstein distance. 
    \end{proof}

This corollary suggests that choosing the number $K\equiv K(N)$ of clusters, or equivalently the exponent $q$ so that $1/d<q/d\le 1/d_{\max}$, provides a tunable tradeoff between the size of the MTH and the complexity of its reference with fixed probabilistic guarantees. We also note that by clustering the product empirical distribution, we automatically obtain a (suboptimal) transport plan that can be used to compute explicit upper bounds for $\epsilon_1,\ldots,\epsilon_n$.

\medskip
\bibliography{alias,references} 
\bibliographystyle{IEEEtranS}
\end{document}